

\documentclass[a4paper,reqno,10pt]{amsart}





\usepackage{amsmath}
\usepackage{amsthm}
\usepackage{amssymb}
\usepackage{amscd} 

\usepackage{bbm} 
\usepackage{mathrsfs} 

\usepackage{lmodern} 
\usepackage[T1]{fontenc} 
\usepackage{newtxtext,newtxmath}

\usepackage{manfnt} 

\usepackage{graphicx} 

\usepackage{verbatim} 

\usepackage[Lenny]{fncychap} 

\usepackage{sqrcaps} 

\usepackage{comment}

\usepackage[svgnames]{xcolor}
\usepackage{pdfcolmk}

\usepackage{tikz}
\usepackage{pifont}

\swapnumbers 

\newtheoremstyle{exercise} 
  {3pt} 
  {3pt} 
  {\small\rmfamily} 
  {
} 
  {\rmfamily\scshape} 
  {.} 
  {.5em} 
  {} 

\newtheoremstyle{newplain}
  {5pt}
  {5pt}
  {\itshape}
  {}
  {\rmfamily\scshape}
  {. ---}
  {.5em}
  {}

\newtheoremstyle{newremark}
  {5pt}
  {5pt}
  {\rmfamily}
  {}
  {\rmfamily\scshape}
  {. ---}
  {.5em}
  {}



\theoremstyle{newplain}
\newtheorem*{Theorem*}{Theorem} 

\theoremstyle{newplain}
\newtheorem{Theorem}{Theorem}

\newtheorem{Corollary}[Theorem]{Corollary}
\newtheorem{Proposition}[Theorem]{Proposition}
\newtheorem{Conjecture}[Theorem]{Conjecture}
\newtheorem{Definition}[Theorem]{Definition}

\theoremstyle{newremark}
\newtheorem{Empty}[Theorem]{}
\newtheorem{Remark}[Theorem]{Remark}

\newtheorem{Claim}[Theorem]{Claim}

\theoremstyle{exercise}

\numberwithin{Theorem}{section}
\numberwithin{Exercise}{section}

\newcommand{\N}{\mathbb{N}} 

\newcommand{\R}{\mathbb{R}} 
\newcommand{\Rm}{\R^m}
\newcommand{\Rn}{\R^n}



\newcommand{\ind}{\mathbbm{1}} 

\newcommand{\calA}{\mathscr{A}}
\newcommand{\calB}{\mathscr{B}}
\newcommand{\calC}{\mathscr{C}}

\newcommand{\calE}{\mathscr{E}}
\newcommand{\calF}{\mathscr{F}}

\newcommand{\calH}{\mathscr{H}}

\newcommand{\calL}{\mathscr{L}}

\newcommand{\calR}{\mathscr{R}}
\newcommand{\calS}{\mathscr{S}}

\newcommand{\calV}{\mathscr{V}}

\newcommand{\calY}{\mathscr{Y}}
\newcommand{\calZ}{\mathscr{Z}}




\newcommand{\frH}{\frak H}



\newcommand{\balpha}{\boldsymbol{\alpha}}

\newcommand{\bdelta}{\boldsymbol{\delta}}
\newcommand{\boldeta}{\boldsymbol{\eta}}

\newcommand{\bxi}{\boldsymbol{\xi}}


\newcommand{\bB}{\mathbf{B}}
\newcommand{\bC}{\mathbf{C}}

\newcommand{\bE}{\mathbf{E}}

\newcommand{\bG}{\mathbf{G}}

\newcommand{\bN}{\mathbf{N}}

\newcommand{\bU}{\mathbf{U}}
\newcommand{\bV}{\mathbf{V}}
\newcommand{\bW}{\mathbf{W}}

\newcommand{\bY}{\mathbf{Y}}

\newcommand{\bc}{\mathbf{c}}



\newcommand{\bv}{\mathbf{v}}
\newcommand{\bw}{\mathbf{w}}



\DeclareMathOperator{\rmBdry}{\mathrm{Bdry}} 
\DeclareMathOperator{\rmClos}{\mathrm{Clos}} 

\DeclareMathOperator{\rmdiam}{\mathrm{diam}} 

\DeclareMathOperator{\rmdist}{\mathrm{dist}} 


\DeclareMathOperator{\rmHom}{\mathrm{Hom}} 
\DeclareMathOperator{\rmid}{\mathrm{id}} 
\DeclareMathOperator{\rmim}{\mathrm{im}} 
\DeclareMathOperator{\rmInt}{\mathrm{Int}} 

\DeclareMathOperator{\rmLip}{\mathrm{Lip}} 

\DeclareMathOperator{\rmrank}{\mathrm{rank}}
\DeclareMathOperator{\rmspan}{\mathrm{span}} 
\DeclareMathOperator{\rmspt}{\mathrm{spt}} 
\DeclareMathOperator{\rmtrace}{\mathrm{trace}} 

\newcommand{\rmI}{\mathrm{I}}
\newcommand{\rmII}{\mathrm{II}}



\newcommand{\lseg}{\boldsymbol{[}\!\boldsymbol{[}}
\newcommand{\rseg}{\boldsymbol{]}\!\boldsymbol{]}}




\newcommand{\hel} {
\hskip2.5pt{\vrule height7pt width.5pt depth0pt}
\hskip-.2pt\vbox{\hrule height.5pt width7pt depth0pt}
\, }






\newcommand{\bin}[2]{
\begin{pmatrix} #1 \\
#2 
\end{pmatrix}}


\def\Xint#1{\mathchoice
{\XXint\displaystyle\textstyle{#1}}%
{\XXint\textstyle\scriptstyle{#1}}%
{\XXint\scriptstyle\scriptscriptstyle{#1}}%
{\XXint\scriptscriptstyle
\scriptscriptstyle{#1}}%
\!\int}
\def\XXint#1#2#3{{%
\setbox0=\hbox{$#1{#2#3}{\int}$}
\vcenter{\hbox{$#2#3$}}\kern-.5\wd0}}

\def\dashint{\Xint-}




\newcommand{\veps}{\varepsilon}

\newcommand{\vphi}{\varphi}
\newcommand{\wh}{\widehat}
\newcommand{\la}{\langle}
\newcommand{\ra}{\rangle}


\renewcommand{\leq}{\leqslant}
\renewcommand{\geq}{\geqslant}
\renewcommand{\subset}{\subseteq}

\renewcommand{\bV}{\mathbb{V}}
\renewcommand{\bG}{\mathbb{G}}
\renewcommand{\bY}{Y}



\hyphenation{dé-nom-bra-ble}
\hyphenation{pré-cé-dent}
\hyphenation{pré-cé-dente}
\hyphenation{épi-gra-phe}

\newlength{\drop}

\usepackage{trajan}


\begin{document}



\title[On a conjecture of A. Zygmund]{Density estimate from below\\
in relation to a conjecture of A. Zygmund\\
on Lipschitz differentiation}

\def\curraddrname{{\itshape On leave of absence from}}

\author[Th. De Pauw]{Thierry De Pauw}
\address{School of Mathematical Sciences\\
Shanghai Key Laboratory of PMMP\\ 
East China Normal University\\
500 Dongchuang Road\\
Shanghai 200062\\
P.R. of China\\
and NYU-ECNU Institute of Mathematical Sciences at NYU Shanghai\\
3663 Zhongshan Road North\\
Shanghai 200062\\
China}
\curraddr{Universit\'e Paris Diderot\\ 
Sorbonne Universit\'e\\
CNRS\\ 
Institut de Math\'ematiques de Jussieu -- Paris Rive Gauche, IMJ-PRG\\
F-75013, Paris\\
France}
\email{thdepauw@math.ecnu.edu.cn,thierry.de-pauw@imj-prg.fr}

\keywords{Lebesgue measure, Nikod\'ym set, Negligible set, Derivation basis, Zygmund conjecture, Lipschitz differentiation}

\subjclass[2010]{Primary 28A75,26B15}

\thanks{The author was partially supported by the Science and Technology Commission of Shanghai (No. 18dz2271000).}



\begin{abstract}
Letting $A \subset \Rn$ be Borel measurable and $\bW_0 : A \to \bG(n,m)$ Lipschitzian, we establish that 
\begin{equation*}
\limsup_{r \to 0^+} \frac{\calH^m \left[ A \cap \bB(x,r) \cap (x+ \bW_0(x))\right]}{\balpha(m)r^m} \geq \frac{1}{2^n},
\end{equation*}
for $\calL^n$-almost every $x \in A$. In particular, it follows that
$A$ is $\calL^n$-negligible if and only if $\calH^m(A \cap (x+\bW_0(x))=0$, for $\calL^n$-almost every $x \in A$.  
\end{abstract}

\maketitle

\tableofcontents


\section{Foreword}

Let $\calS_x$ be the set of squares centered at $x \in \R^2$.
The Lebesgue density theorem states that if $f : \R^2 \to \R$ is Lebesgue summable, then 
\begin{equation}
\label{eq.intro}
f(x) = \lim_{\substack{S \in \calS_x\\\rmdiam S \to 0^+}} \dashint_S f d\calL^2,\tag{$*$}
\end{equation}
for $\calL^2$-almost every $x \in \R^2$.
This consequence of the Vitali covering theorem fails if $\calS_x$ is replaced with $\calR_x$, the set of rectangles centered at $x$ of arbitrary direction and eccentricity. 
It fails even for some indicator function $f = \ind_A$.
Indeed, Nikod\'ym \cite{NIK.27} defined a set $A \subset [0,1] \times [0,1]$ of full measure with the following property: For every $x \in A$, there exists a line $L(x)$ such that $A \cap L(x) = \{x\}$.
In fact, Nikod\'ym's example shows that the Lebesgue density theorem fails if we replace $\calS_x$ with $\calR_x^L$, the set of rectangles of arbitrary eccentricity, centered at $x$, and one side of which is parallel to $L(x)$.
Furthermore \cite[Chap. IV Theorem 3.5]{GUZMAN.1981}, replacing $A$ by a nonnegligible subset of $A$, one may choose $L$ to be continuous with respect to $x$.
Yet, if $L$ is constant, then the Lebesgue density theorem with respect to $\calR_x^L$ holds, for $p$-summable functions $f$, $1 < p \leq \infty$, by virtue of a theorem of Zygmund, though the corresponding version of the Vitali covering theorem fails, according to an example of H. Bohr \cite[Chap. IV Theorem 1.1]{GUZMAN.1981}.
This raises the question: What regularity condition of $x \mapsto L(x)$ guarantees that the Lebesgue density theorem holds with respect to $\calR_x^L$ for, say, functions that are square summable?
The following version is a conjecture reportedly \cite{LAC.LI.10} attributed to Zygmund.
{\it 
Assume that $x \mapsto L(x)$ is a Lipschitzian field of lines, with $x \in L(x)$, and $f : \R^2 \to \R$ is square summable.
Is it true that
\begin{equation*}
f(x) = \lim_{r \to 0^+} \dashint_{L(x) \cap \bB(x,r)} f d\calH^1,
\end{equation*}
for $\calL^2$-almost every $x \in \R^2$?
Here, $\calH^1$ is the 1-dimensional Hausdorff measure.
}
\par 
E.M. Stein raised the singular integral variant of this conjecture.
Both have received much attention from the harmonic analysis community.
To the author's knowledge, most results recorded so far in the literature, via the maximal function approach, assume some extra regularity property of $L$ -- namely, that $L$ be $C^1$ together with a hypothesis on the variation of the derivative -- see, for instance,  \cite{BOU.89} and \cite{LAC.LI.10}.
\par 
In this paper we offer a novel approach -- a kind of change of variable based on an appropriate fibration and the coarea formula.
This allows for treating the case when $L$ is Lipschitzian, without further restriction.
We obtain the following lower density bound for indicator functions.
{\it 
Let $x \mapsto L(x)$ be a Lipschitzian field of lines, with $x \in L(x)$, and let $A \subset \R^2$ be Lebesgue measurable.
Then,
\begin{equation*}
\limsup_{r \to 0^+} \frac{\calH^1(A \cap B(x,r)\cap L(x) )}{2r} \geq \frac{1}{4},
\end{equation*}
for $\calL^2$-almost every $x \in A$.
}
Incidentally, the following corollary -- a nonparallel version of Fubini theorem -- seems to be new as well.
{\it 
The set $A$ is Lebesgue negligible if and only if $\calH^1(A \cap L(x))=0$, for $\calL^2$-almost every $x \in \R^2$.
}
\par 
Our results hold in any dimension and codimension. 

\nocite{BRU.71}

\section{Sketch of proof}

Let $A$ be a subset of Euclidean space $\Rn$, $n \geq 2$, and let $\calL^n$ be the Lebesgue outer measure. 
We start by considering the following weak question: Can one tell whether $A$ is Lebesgue negligible from the knowledge only of its trace on each member of some given collection of ``lower dimensional'' subsets $\Gamma_i \subset \Rn$, $i \in I$. 
One expects that if $A \cap \Gamma_i$ is ``negligible in the dimension of $\Gamma_i$'', for each $i \in I$, then $\calL^n(A)=0$. 
Of course, a necessary condition is that the sets $\Gamma_i$ cover almost all of $A$, i.e., $\calL^n(A \setminus \cup_{i \in I} \Gamma_i)=0$. 
Consider, for instance, $n=2$, $I = \R$, and $\Gamma_t = \{t\} \times \R$, for $t \in \R$, the collection of all vertical lines in the plane. 
It is not true in general that if $A \subset \R^2$ and $A \cap \Gamma_t$ is a singleton, for each $t \in \R$, then $\calL^2(A)=0$. 
There exist, indeed, functions $f : \R \to \R$ whose graph $A$ satisfies $\calL^2(A) > 0$ -- see, e.g., \cite[Chapter 2 Theorem 4]{KHARAZISHVILI} for an example due to Sierpi\'nski.
In order to rule out such examples, we will henceforth assume that $A \subset \Rn$ is Borel measurable. 
In that case, the theorem of Fubini, together with the invariance of the Lebesgue measure under orthogonal transformations, imply the following. 
Given an integer $1 \leq m \leq n-1$, if $(\Gamma_i)_{i \in I}$ is the collection of all $m$-dimensional affine subspaces of $\Rn$ of some fixed direction, and if $\calH^m(A \cap \Gamma_i)=0$ for all $i \in I$, then $\calL^n(A)=0$. 
Here, $\calH^m$ denotes the $m$-dimensional Hausdorff measure. 
A special feature of this collection $(\Gamma_i)_{i \in I}$ is that it partitions $\Rn$, its members being the level sets $f^{-1}\{y\}$, $y \in \R^{n-m}$, of a ``nice map'' $f : \Rn \to \R^{n-m}$, indeed, an orthogonal projection.
This is an occurrence of the following more general situation when $f$ and its leaves $f^{-1}\{y\}$ are allowed to be nonlinear. 
The coarea formula due to Federer \cite{FED.59} asserts that if $f : \Rn \to \R^{n-m}$ is Lipschitzian and $A \subset \Rn$ is Borel measurable, then
\begin{equation*}
\int_A Jf(x) d\calL^n(x) = \int_{\R^{n-m}} \calH^m\left(A \cap f^{-1}\{y \}\right) d \calL^{n-m}(y) .
\end{equation*} 
Thus, if the Jacobian coarea factor $Jf$ is positive, $\calL^n$-almost everywhere in $A$, then the collection $\left( f^{-1}\{y\} \right)_{y \in \R^{n-m}}$ is suitable for detecting whether or not $A$ is Lebesgue null. 
At $\calL^n$-almost all $x \in \Rn$, the map $f$ is differentiable, by virtue of Rademacher's theorem, and
\begin{equation*}
Jf(x) = \sqrt{ \left| \det \left( Df(x) \circ Df(x)^*\right)\right|} = \left\| \wedge_{n-m} Df(x) \right\|,
\end{equation*}
see \cite[Chapter 3 \S 4]{EVANS.GARIEPY} and \cite[3.2.1 and 3.2.11]{GMT}.
\par
In this paper, we focus on the case when $\Gamma_i$, $i \in I$, are affine subspaces of $\Rn$, but not necessarily members of a partition of the ambient space. 
Specifically, we assume that with each $x \in \Rn$ is associated an $m$-dimensional affine subspace $\bW(x)$ of $\Rn$ containing $x$. 
Given a Borel set $A \subset \Rn$, the question whether
\begin{equation}
\label{eq.1}
\textit{If } \calH^m(A \cap \bW(x))=0 \text{, for all } x \in A \textit{, then } \calL^n(A)=0 \,,
\end{equation}  
has a negative answer, in view of Nikod\'ym's set $A \subset \R^2$ evoked in the previous section.
Indeed, corresponding to this set $A$, there exists a field of lines $x \mapsto \bW(x)$ such that $A \cap \bW(x) = \{x\}$, for all $x \in A$.
In this context, a selection theorem due to von Neumann implies that (possibly considering a smaller, non Lebesgue null, Borel subset of $A$) the correspondence $x \mapsto \bW(x)$ can be chosen to be Borel measurable (see \ref{nik.set}) and, in turn, it can be chosen to be continuous, according to Lusin's theorem. 
Nonetheless, when $\bW$ is Lipschitzian, the situation improves, as illustrated in our theorem below; $\bG(n,m)$ is the Grassmannian manifold consisting of $m$-dimensional vector subspaces of $\Rn$.

\begin{Theorem*}
Assume $\bW_0 : \Rn \to \bG(n,m)$ is Lipschitzian and $A \subset \Rn$ is Borel measurable. The following are equivalent.
\begin{enumerate}
\item[(A)] $\calL^n(A)=0$.
\item[(B)] For $\calL^n$ almost every $x \in A$, $\calH^m\left(A \cap (x+\bW_0(x))\right)=0$.
\item[(C)] For $\calL^n$ almost every $x \in \Rn$, $\calH^m\left(A \cap (x+\bW_0(x))\right)=0$.
\end{enumerate}
\end{Theorem*}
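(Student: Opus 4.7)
The equivalence splits into four implications, two of which are formal. Both $(A)\Rightarrow(B)$ and $(C)\Rightarrow(B)$ are immediate: if $\calL^n(A)=0$, then any property holds at $\calL^n$-almost every point of $A$; and if a property holds at $\calL^n$-almost every point of $\Rn$, it a fortiori holds at $\calL^n$-almost every point of $A$. The implication $(B)\Rightarrow(A)$ is where the density estimate announced in the abstract is used: if $\calH^m(A\cap(x+\bW_0(x)))=0$ for $\calL^n$-a.e.\ $x\in A$, then the $\limsup$ in that inequality vanishes at those points, which contradicts the lower bound $1/2^n$ unless $\calL^n(A)=0$.

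The substantive implication is $(A)\Rightarrow(C)$, which I would establish by a local coarea computation. First, cover $\Rn$ by countably many bounded open balls $U_k$ on each of which one can select a Lipschitz map $E_k\colon U_k\to(\Rn)^m$ whose columns form an orthonormal basis of $\bW_0(x)$. This is the standard Lipschitz local trivialization of the tautological bundle over $\bG(n,m)$: on a neighborhood of $x_0$ where $\bW_0(x)$ stays transverse to a fixed reference $(n-m)$-plane, Gram--Schmidt applied to the orthogonal projection of a fixed frame onto $\bW_0(x)$ produces such an $E_k$, inheriting the Lipschitz regularity of $\bW_0$. On each $U_k$ consider the locally Lipschitz map
\begin{equation*}
F_k\colon U_k\times\R^m\to\Rn,\qquad F_k(x,t)=x+E_k(x)t,
\end{equation*}
whose restriction to $\{x\}\times\R^m$ is an isometry onto $x+\bW_0(x)$.

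The coarea Jacobian $JF_k(x,t)=\sqrt{\det(DF_k\,DF_k^{\top})}$ satisfies $JF_k(x,0)^2=\det(I_n+E_k(x)E_k(x)^{\top})=2^m$, and, because the entries of $DF_k(x,t)$ are affine in $t$, a Cauchy--Binet expansion shows that for every $x$ at which $E_k$ is differentiable (that is, $\calL^n$-a.e.\ $x$, by Rademacher's theorem) the function $t\mapsto JF_k(x,t)^2$ is a polynomial on $\R^m$ taking the positive value $2^m$ at the origin. A nontrivial polynomial on $\R^m$ vanishes on an $\calL^m$-null set, so $\{JF_k=0\}$ is $\calL^{n+m}$-null on $U_k\times\R^m$. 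Under assumption $(A)$, applying the coarea formula to the Lipschitz restriction of $F_k$ to $U_k\times\bB(0,R)$, for each fixed $R\in\N$, yields
\begin{equation*}
\int_{U_k\times\bB(0,R)}JF_k\cdot(\ind_A\circ F_k)\,d\calL^{n+m}=\int_A\calH^m\bigl(F_k^{-1}\{y\}\cap(U_k\times\bB(0,R))\bigr)\,d\calL^n(y)=0,
\end{equation*}
and the almost-everywhere positivity of $JF_k$ forces $\ind_A\circ F_k=0$ off a $\calL^{n+m}$-null set. Fubini in the $t$ variable, combined with the isometric parametrization $t\mapsto x+E_k(x)t$, then gives $\calH^m\bigl(A\cap(x+\bW_0(x))\cap\bB(x,R)\bigr)=0$ for $\calL^n$-a.e.\ $x\in U_k$. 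Letting $R\to\infty$ along $\N$ and taking a countable union over $k$ delivers $(C)$.

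The main obstacle I foresee is the almost-everywhere positivity of $JF_k$ globally in $t$, not merely for $|t|$ small: without this, the vanishing of the coarea integral could not be inverted into a pointwise statement covering the full, unbounded plane $x+\bW_0(x)$. The Cauchy--Binet plus Rademacher argument is the clean way to close this gap. Once it is in hand, the upgrade from a disc-wise conclusion to the whole plane is automatic by $\sigma$-additivity, and the equivalence follows.
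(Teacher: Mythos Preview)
Your proof is correct. For $(A)\Rightarrow(C)$ you apply the coarea formula directly to the trivializing map $F_k(x,t)=x+E_k(x)t\colon U_k\times\R^m\to\Rn$ and show $JF_k>0$ almost everywhere via the polynomial argument; the paper instead passes through the fibered set $\Sigma=F(E\times\R^m)\subset\Rn\times\Rn$ and applies coarea to the two projections $\pi_1,\pi_2$ restricted to $\Sigma$, establishing $J_\Sigma\pi_2>0$ a.e.\ by the same polynomial device to obtain the absolute continuity of $\phi_{E,\bW}$ with respect to $\calL^n$. Your route is a more direct packaging of the same idea --- your $F_k$ is the paper's $\pi_2\circ F$ --- and it avoids introducing $\Sigma$, at the price of not producing the measure $\phi_{E,\bW}$ and its Radon--Nikod\'ym density $\calZ_E\bW$, which the paper needs anyway for the quantitative density estimate. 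For $(B)\Rightarrow(A)$ you invoke the main density theorem, whereas the paper's own proof uses only the weaker statement that $\calZ_E\bW>0$ almost everywhere; the remark immediately following that proof explicitly records your route as a valid (if more involved) alternative, and also sketches a third, more elementary local-lipeomorphism argument that bypasses the $\calY_E\bW$ machinery altogether.
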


As should be apparent from the discussion above, one difficulty stands with the fact that the affine $m$-planes $\bW(x) = x + \bW_0(x)$ may not be disjointed.
Nevertheless, they locally are, in the following sense.
Given $x_0 \in A$ there exist a neighborhood $U$ of $x_0$ and Lipschitzian maps $\bw_1,\ldots,\bw_m : U \to \Rn$ so that $\bw_1(x),\ldots,\bw_m(x)$ is an orthonomal frame spanning $\bW_0(x)$, for $x \in U$, and the map $\Phi : (V_{x_0} \cap U) \times \Rm \to \Rn : (\xi,t) \mapsto \xi + \sum_{i=1}^m t_i \bw_i(\xi)$ is a lipeomorphism of a neighborhood of $x_0$ onto its image -- here, $V_{x_0} = x_0 + V$ and $V \in \bG(n,n-m)$ is close to $\bW_0(x_0)^\perp$.
This, and applications of Fubini's theorem, yield $(B) \Rightarrow (A)$ in the theorem above (see \ref{remark}).
\par 
However, we aim at obtaining a quantative version of this result, that the change of variable just described does not seem to provide.
A natural route is to reduce the problem to applying the coarea formula, by spreading out the $\bW(x)$'s in a disjointed way, in a higher dimensional space -- i.e., adding a variable $u \in \bW(x)$ to the given $x \in \Rn$ and considering $\bW(x)$ as a fiber above the base space $\Rn$.
We thus define
\begin{equation*}
\Sigma = \Rn \times \Rn \cap \{ (x,u) : x \in E \text{ and } u \in \bW(x) \} ,
\end{equation*}
where $E \subset \Rn$ is Borel measurable. 
This set is $(n+m)$-rectifiable, owing to the Lipschitz continuity of $\bW$. 
It is convenient to assume that $\calL^n(E) < \infty$, so that
\begin{equation}
\label{eq.01}
\phi_E(B) = \int_E \calH^m \left( B \cap \bW(x) \right) d\calL^n(x) ,
\end{equation}
$B \subset \Rn$, is a locally finite Borel measure \ref{def.phi}. 
Now, $\Sigma$ was precisely set up so that, for each $x \in E$,
\begin{equation*}
\calH^m \left( \Sigma \cap \pi_2^{-1}(B) \cap \pi_1^{-1}\{x\} \right) = \calH^m \left( B \cap \bW(x)\right) ,
\end{equation*}
where $\pi_1$ and $\pi_2$ denote the projections of $\Rn \times \Rn$ to the $x$ and $u$ variable, respectively.
Abbreviating $\Sigma_B =  \Sigma \cap \pi_2^{-1}(B)$, the coarea formula yields
\begin{equation}
\label{eq.02}
\phi_E(B) = \int_{\Sigma_B} J_\Sigma \pi_1 d\calH^{n+m} .
\end{equation}
A simple calculation \ref{factor.pi.2} shows that $J_\Sigma \pi_2 > 0$, $\calH^{n+m}$-almost everywhere on $\Sigma_B$. Since also 
\begin{equation}
\label{eq.03}
\int_{\Sigma_B} J_\Sigma \pi_2 d \calH^{n+m} = \int_B \calH^m \left( \Sigma_B \cap \pi_2^{-1}\{u\}\right)d\calL^n(u),
\end{equation}
the implication $(A) \Rightarrow (C)$ above should now be clear: Letting $B=A$ and $E=\bB(0,R)$, one infers from hypothesis (A) and \eqref{eq.03} that $\calH^{n+m}(\Sigma_B) = 0$, whence, $\phi_{\bB(0,R)}(A)=0$, by \eqref{eq.02}, and, in turn, conclusion (C) ensues from \eqref{eq.01}.
\par 
In order to establish that $(B) \Rightarrow (A)$ by using this new change of variable, we need to observe \ref{factor.pi.1} that $J_\Sigma \pi_1 > 0$, $\calH^{n+m}$-almost everywhere, and, ideally, to show that $\calH^m \left( \Sigma_B \cap \pi_2^{-1}\{u\}\right) > 0$, for almost every $u \in B$. 
This last part involves some difficulty. 
To understand this, we let $m=n-1$, in order to keep the notations short. 
Now $u \in \bW(x)$ iff $u-x \in \bW_0(x)$ iff $\la \bv_0(x) , x-u \ra =0$, where $\bv_0(x) \in \bW_0(x)^\perp$ is, say, a unit vector. 
Abbreviating $g_u(x) = \la \bv_0(x) , x-u \ra$, we infer that
\begin{equation*}
\calH^m \left( \Sigma_B \cap \pi_2^{-1}\{u\}\right) = \calH^m \left( E \cap g_u^{-1} \{0\}\right).
\end{equation*}
The problem remains that two of the nonlinear $m$ sets $E \cap g_u^{-1} \{0\}$ and $E \cap g_{u'}^{-1} \{0\}$ may intersect, thereby preventing another application of the coarea formula when looking out for their lower bound. 
Yet, we already know that
\begin{equation*}
\phi_E(B) = \int_B \calZ_E \bW d\calL^n,
\end{equation*}
where $\calZ_E \bW$ is a Radon-Nikod\'ym derivative (see \ref{def.Z}) and also that $(\calZ_E \bW)(u)$ is comparable to $\calH^m \left( E \cap g_u^{-1} \{0\}\right)$ (see \ref{Z.1}). 
Adding an extra variable $y$ to the fibered space $\Sigma$, \ref{fibration.2}, we improve on this by showing that
\begin{equation*}
(\calZ_E \bW)(u) \geq \boldeta(n,m) \liminf_j \dashint_{-j^{-1}}^{j^{-1}} \calH^m \left( E \cap g_u^{-1} \{y\}\right) d\calL^1(y) = \boldeta(n,m) (\calY_E \bW)(u),
\end{equation*}
where the last equality defines $\calY_E \bW$, and $\boldeta(n,m) > 0$ is a lower bound for the coarea Jacobian factor of the restriction to the fibered space of the projection $(x,u,y) \mapsto (x,y)$ (see \ref{def.Y} and \ref{lb.2}). 
We are reduced to showing that $\calY_E \bW > 0$, almost everywhere. 
The reason why this holds is the following. 
Fix a Borel set $Z \subset \Rn$, $x_0 \in \Rn$ and $r > 0$. 
Let $\bC_\bW(x_0,r)$ be the cylindrical box consisting of those $x \in \Rn$ such that $\left|P_{\bW_0(x_0)}(x-x_0) \right| \leq r$ and $\left|P_{\bW_0(x_0)^\perp}(x-x_0) \right| \leq r$. 
We want to find a lower bound for
\begin{equation*}
\int_{Z \cap \bC_\bW(x_0,r)} \left(\calY_{Z \cap \bC_\bW(x_0,r)}\right)(u) d\calL^n(u).
\end{equation*}
To this end, we fix $z \in \bW_0(x_0) \cap \bB(0,r)$ and we let $V_z = \Rn \cap \{ x_0 + z + s\bv_0(x_0) : -r \leq s \leq r \}$ denote the corresponding vertical line segment. 
According to Fubini's theorem we are reduced to estimating
\begin{equation*}
\int_{V_z} \left(\calY_{Z \cap \bC_\bW(x_0,r)}\right)(u) d\calH^1(u) .
\end{equation*}
By virtue of Vitali's covering theorem, we can find a disjointed family of line segments $I_1,I_2,\ldots$, covering almost all $V_z$, such that the above integral nearly equals
\begin{multline*}
\sum_k \calH^1(I_k) \dashint_{I_k} \calH^m \left( Z \cap \bC_\bW(x_0,r) \cap g_{u_k}^{-1} \{y\}\right) d\calL^1(y) \\
\cong \sum_k \int_{ Z \cap \bC_\bW(x_0,r) \cap g_{u_k}^{-1} (I_k) } \left| \nabla g_{u_k}(x) \right| d\calL^n(x) \cong \calL^n \left( Z \cap \bC_\bW(x_0,r)\right),
\end{multline*}
where the first near equality follows from the coarea formula, and the second one because $\left| \nabla g_{u_k} \right| \cong 1$ at small scales (see \ref{jac.g}) and the ``nonlinear horizontal stripes'' $g_{u_k}^{-1} (I_k)$ are nearly pairwise disjoint. 
Verification of these claims takes up sections 6 and 7. 
Now, we reach a contradiction if $Z = \Rn \cap \{ \calY_E \bW = 0\}$ is assumed to have $\calL^n(Z) > 0$, and $x_0$ is a point of density of $Z$.
\par 
Along this path, we have, in fact, achieved a quantitative version of these observations.
Indeed, assuming that $\limsup_{r \to 0^+} \frac{\calH^m (A \cap \bB(x,r) \cap \bW(x))}{\balpha(m)r^m} < \frac{\boldeta(n,m)}{2^m}$, for $\calL^n$ positively many $x \in A$, we start by choosing $E \subset A$ with the same property, whose diameter is small enough for the various estimates above to hold, and we refer to Egoroff's theorem to select $\veps > 0$ so that the set
\begin{equation*}
Z = E \cap \left\{ \limsup_{r \to 0^+} \frac{\calH^m (A \cap \bB(x,r) \cap \bW(x))}{\balpha(m)r^m} < (1-\veps) \frac{\boldeta(n,m)}{(2+\veps)^m} \right\}
\end{equation*}
has positive $\calL^n$ measure. 
Thus, for $0 < \hat{\veps} \ll \veps$, there exists $x_0 \in Z$ and there exists $r > 0$ as small as we please so that
\begin{multline*}
\left( 1 - \hat{\veps} \right) \calL^m \left( \bC_\bW(x_0,r) \right) \leq \calL^n  \left(Z \cap  \bC_\bW(x_0,r) \right) \lesssim \int_{Z \cap \bC_\bW(x_0,r)} \frac{\calY_{Z \cap \bC_\bW(x_0,r)} \bW(u)}{\balpha(m)r^m} d\calL^n(u) \\
\lesssim \frac{1}{\boldeta(n,m)} \int_{Z \cap \bC_\bW(x_0,r)} \frac{\calH^m(Z \cap \bC_\bW(x_0,r) \cap \bW(x))}{\balpha(m)r^m	} d\calL^n(x)\\
 \leq \frac{1}{\boldeta(n,m)} \int_{Z \cap \bC_\bW(x_0,r)} \frac{\calH^m(Z \cap \bB(x,2r+o(r)) \cap \bW(x))}{\balpha(m)r^m	} d\calL^n(x) \\
 \lesssim \frac{1}{\boldeta(n,m)} \int_{Z \cap \bC_\bW(x_0,r)}  (1-\veps) \frac{\boldeta(n,m)}{(2+\veps)^m} d\calL^n(x) \leq (1-C\veps) \calL^n \left( \bC_\bW(x_0,r) \right) ,
\end{multline*}
where the symbol $\lesssim$ means that we loose a multiplicative factor $\lambda > 1$ as close as we wish to 1, according to the scale $r$. 
The above contradiction proves our main result \ref{main.density}, namely,
\begin{Theorem*}
Assume $\bW_0 : \Rn \to \bG(n,m)$ is Lipschitzian and $A \subset \Rn$ is Borel measurable. Abbreviating $\bW(x) = x + \bW_0(x)$, it follows that
\begin{equation*}
\limsup_{r \to 0^+} \frac{\calH^m \big( A \cap \bB(x,r) \cap \bW(x) \big)}{\balpha(m)r^m} \geq \frac{1}{2^n},
\end{equation*}
for $\calL^n$-almost every $x \in A$.
\end{Theorem*}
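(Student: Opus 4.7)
The plan is to argue by contradiction along the lines laid out in the Sketch of Proof. Assume, toward a contradiction, that on a subset of $A$ of positive $\calL^n$ measure the upper density is strictly smaller than $1/2^n$. Using Egoroff's theorem together with the continuity of $\bW_0$, I select a Borel set $Z\subset A$ of positive Lebesgue measure, contained in a small ball so that the Lipschitz constant of $\bW_0$ controls the geometry, and a small $\veps>0$ so that the strict inequality
\[
\limsup_{r\to 0^+}\frac{\calH^m\bigl(A\cap\bB(x,r)\cap\bW(x)\bigr)}{\balpha(m)r^m}<(1-\veps)\frac{\boldeta(n,m)}{(2+\veps)^m}
\]
holds uniformly for $x\in Z$, where $\boldeta(n,m)$ is the explicit lower bound for the coarea Jacobian on the auxiliary fibered space and, after the verifications below, will be seen to satisfy $\boldeta(n,m)/(2+\veps)^m\to 1/2^n$ as $\veps\to 0^+$. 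Then I pick $x_0\in Z$ a Lebesgue density point of $Z$ and $r>0$ arbitrarily small, and I run the chain of estimates of the Sketch on the cylindrical box $\bC_\bW(x_0,r)$.

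To implement these estimates I first develop the two measure-theoretic identities that underpin the whole argument. The set $\Sigma=\{(x,u):x\in E,\,u\in\bW(x)\}$ is $(n+m)$-rectifiable since $\bW_0$ is Lipschitzian, so I can apply the coarea formula to the projections $\pi_1,\pi_2:\Sigma\to\Rn$. After checking that $J_\Sigma\pi_2>0$ almost everywhere (item \ref{factor.pi.2}), this yields the Radon-Nikod\'ym derivative $\calZ_E\bW$ of $\phi_E$ with respect to $\calL^n$. I then enlarge the fibered space by introducing the variable $y=g_u(x)=\langle\bv_0(x),x-u\rangle$ (and its vector-valued analogue in higher codimension), apply the coarea formula once more to the projection $(x,u,y)\mapsto(x,y)$, and obtain the pointwise lower bound $\calZ_E\bW(u)\geq\boldeta(n,m)\,\calY_E\bW(u)$ defining $\calY_E\bW$ as a $\liminf$ of radial averages.

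The main obstacle is to prove that $\calY_E\bW(u)$ is not only positive but quantitatively comparable to $\balpha(m)r^m$ on a large portion of $\bC_\bW(x_0,r)$; this is what carries the argument. Following the Sketch, I fix $z\in\bW_0(x_0)\cap\bB(0,r)$, reduce by Fubini to integrating $\calY_{Z\cap\bC_\bW(x_0,r)}\bW$ over the vertical segment $V_z$, and run a Vitali cover of $V_z$ by small intervals $I_k$. For each $I_k$ the coarea formula applied to $g_{u_k}$ together with $|\nabla g_{u_k}|\cong 1$ at small scales (item \ref{jac.g}) converts the radial average into $\calL^n\bigl(Z\cap\bC_\bW(x_0,r)\cap g_{u_k}^{-1}(I_k)\bigr)/\calH^1(I_k)$. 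The delicate point is then to verify the near disjointness of the nonlinear horizontal stripes $g_{u_k}^{-1}(I_k)$, which requires a careful use of the Lipschitz regularity of $\bv_0$ and a smallness hypothesis on $r$ ensuring that the distortion of the foliation by the $g_{u_k}$'s is negligible on the scale of $r$. Summing over $k$ then recovers $\calL^n\bigl(Z\cap\bC_\bW(x_0,r)\bigr)$ up to a multiplicative factor arbitrarily close to $1$.

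Combining these three ingredients, the chain of inequalities displayed in the Sketch reads
\[
(1-\hat\veps)\calL^n\bigl(\bC_\bW(x_0,r)\bigr)\leq\calL^n\bigl(Z\cap\bC_\bW(x_0,r)\bigr)\leq(1-C\veps)\calL^n\bigl(\bC_\bW(x_0,r)\bigr),
\]
with $\hat\veps$ controlling the density-point and Vitali errors and $C\veps$ arising from the density hypothesis. Choosing $\hat\veps\ll\veps$ gives the contradiction. Tracking the multiplicative constants at each step, in particular the inclusion $\bC_\bW(x_0,r)\subset\bB(x,2r+o(r))$ that accounts for the factor $(2+\veps)^m$ and the explicit value of $\boldeta(n,m)$, turns the qualitative argument for $\calY_E\bW>0$ into the quantitative density bound $1/2^n$ announced in the theorem.
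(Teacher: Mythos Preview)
Your proposal is correct and follows essentially the same route as the paper's own proof: contradiction via Egoroff, selection of a density point $x_0$, the two-step fibration yielding $\calZ_E\bW\geq\boldeta(n,m)\,\calY_E\bW$, the Vitali/coarea lower bound for $\calY_E\bW$ on vertical segments of the polyball, and the final chain of inequalities. The only notable difference in execution is that the paper carries out the last step by slicing $\bC_\bW(x_0,r)$ along level sets of the gauge $\nu_{x_0}$ (via the coarea formula for $\nu_{x_0}$) rather than invoking the crude inclusion $\bC_\bW(x_0,r)\cap\bW(x)\subset\bB(x,2r+o(r))$ directly; either route yields the same constant $1/2^n$, and your version is the one sketched informally in Section~2.
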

\par 
I extend my warm thanks to Jean-Christophe L\'eger for carefully reading an early version of the manuscript. 


\section{Preliminaries}

\begin{Empty}
In this paper, $1 \leq m \leq n-1$ are integers. 
The ambient space is $\Rn$. 
The canonical inner product of $x,x' \in \Rn$ is denoted $\la x,x' \ra$ and the corresponding Euclidean norm of $x$ is $|x|$. 
If $S \subset \Rn$, we let $\calB(S)$ be the $\sigma$-algebra of Borel measurable subsets of $S$.
\end{Empty}

\begin{Empty}[Hausdorff measure]
\label{h.m}
We let $\calL^n$ be the Lebesgue outer measure in $\Rn$ and $\balpha(n) = \calL^n(\bB(0,1))$. 
For $S \subset \Rn$, we abbreviate $\zeta^m(S) = \balpha(m) 2^{-m}(\rmdiam S)^m$. 
Given $0 < \delta \leq \infty$, we call $\delta$-cover of $A \subset \Rn$ a finite or countably infinite family $(S_j)_{j \in J}$ of subsets of $\Rn$ such that $A \subset \cup_{j \in J} S_j$ and $\rmdiam S_j \leq \delta$, for every $j \in J$. 
We define
\begin{equation*}
\calH^m_\delta(A) = \inf \left\{ \sum_{j \in J} \zeta^m(S_j) : (S_j)_{j \in J} \text{ is a $\delta$-cover of $A$ }\right\}
\end{equation*}
and $\calH^m(A) = \lim_{\delta \to 0^+} \calH^m_\delta(A) = \sup_{\delta > 0} \calH^m_\delta(A)$.
Thus, $\calH^m$ is the $m$-dimensional Hausdorff outer measure in $\Rn$. 
\par 
{\it 
\begin{enumerate}
\item[(1)] If $(K_k)_k$ is a sequence of nonempty compact subsets of $\Rn$ converging in Hausdorff distance to $K$, then $\calH^m_\infty(K) \geq \limsup_k \calH^m_\infty(K_k)$.
\end{enumerate}
}
\par 
Given $\veps > 0$, choose an $\infty$-cover $(S_j)_{j \in \N}$ of $K$ such that $\calH^m_\infty(K) + \veps \geq \sum_j \zeta^m(S_j)$. 
Since $\lim_{r \to 0^+} \zeta^m(\bU(S_j,r))=\zeta^m(S_j)$, for each $j \in \N$, we can choose an open set $U_j$ containing $S_j$ such that $\zeta^m(U_j) \leq  \veps 2^{-j} + \zeta^m(S_j)$. 
Since $U = \cup_j U_j$ is open, there exists a positive integer $k_0$ such that $K_k \subset U$, whenever $k \geq k_0$. 
Thus, in that case $(U_j)_j$ is an $\infty$-cover of $K_k$ and, therefore, $\calH^m_\infty(K_k) \leq \sum_j \zeta^m(U_j) \leq 2\veps + \calH^m_\infty(K)$. 
Take the limit superior of the left hand side, as $k \to \infty$, and then let $\veps \to 0$.
\par 
{\it 
\begin{enumerate}
\item[(2)] For all $A \subset \Rm$, one has $\calL^m(A) = \calH^m(A) = \calH^m_\infty(A)$.
\end{enumerate}
}
\par
It suffices to note that $\calH^m(A) \geq \calH^m_\infty(A) \geq \calL^m(A) \geq \calH^m(A)$. 
The first inequality is trivial; the second one follows from the isodiametric inequality \cite[2.10.33]{GMT}; the last one is a consequence of the Vitali covering theorem \cite[Chapter 2 \S 2 Theorem 2]{EVANS.GARIEPY}.
\par 
{\it 
\begin{enumerate}
\item[(3)] If $W \subset \Rn$ is an $m$-dimensional affine subspace and $A \subset W$, then $\calH^m(A) = \calH^m_\infty(A)$.
\end{enumerate}
}
\par
Let $\frH^m$ denote the $m$-dimensional Hausdorff outer measure in the metric space $W$. 
In other words,
\begin{equation*}
\frH^m_\delta(A) = \inf \left\{ \sum_{j \in J} \zeta^m(S_j) : (S_j)_{j \in J} \text{ is a $\delta$- cover of $A$ and $S_j \subset W$, for all $j \in J$}\right\} ,
\end{equation*}
and $\frH^m(A) = \sup_{\delta > 0} \frH^m_\delta(A)$.
It is elementary to observe that $\frH^m(A) = \calH^m(A)$ and that $\frH^m_\infty(A) = \calH^m_\infty(A)$. 
Now, if $f : W \to \Rm$ is an isometry, then $\calH^m(A) = \frH^m(A) =  \calH^m(f(A)) = \calH^m_\infty(f(A)) = \frH^m_\infty(A) = \calH^m_\infty(A)$, where the third equality follows from claim (2) above.
\end{Empty}

\begin{Empty}[Coarea formula]
\label{coarea}
We recall two versions of the coarea formula. 
First, if $A \subset \Rn$ is $\calL^n$-measurable and $f : A \to \R^{n-m}$ is Lipschitzian, then $\R^{n-m} \to [0,\infty] : y \mapsto \calH^m \left( A \cap f^{-1}\{y\} \right)$ is $\calL^{n-m}$- measurable and 
\begin{equation*}
\int_A Jf(x) d\calL^n(x) = \int_{\R^{n-m}} \calH^m\left(A \cap f^{-1}\{y \}\right) d \calL^{n-m}(y) .
\end{equation*} 
Here, the coarea Jacobian factor is well-defined $\calL^n$-almost everywhere, according to Rademacher's theorem, and equals
\begin{equation*}
Jf(x) = \sqrt{ \left| \det \left( Df(x) \circ Df(x)^*\right)\right|} = \left\| \wedge_{n-m} Df(x) \right\| ;
\end{equation*}
see, for instance, \cite[Chapter 3 \S 4]{EVANS.GARIEPY}. 
\par 
Secondly, if $A \subset \R^p$ is $\calH^n$-measurable and countably $(\calH^n,n)$-rectifiable and if $f : A \to \R^{n-m}$ is Lipschitzian, then $\R^{n-m} \to [0,\infty] : y \mapsto \calH^m \left( A \cap f^{-1}\{y\} \right)$ is $\calL^{n-m}$-measurable and
\begin{equation*}
\int_A J_Af(x) d\calH^n(x) = \int_{\R^{n-m}} \calH^m\left(A \cap f^{-1}\{y \}\right) d \calL^{n-m}(y) .
\end{equation*} 
In order to state a formula for the coarea Jacobian factor $J_Af(x)$ of $f$ relative to $A$, we consider a point $x \in A$, where $A$ admits an approximate $n$-dimensional tangent space $T_xA$ and $f$ is differentiable at $x$ along $A$. 
Letting $L : T_x A \to \R^{n-m}$ denote the derivative of $f$ at $x$, we have
\begin{equation*}
J_Af(x) = \sqrt{ \left| \det \left( L \circ L^*\right)\right|} = \left\| \wedge_{n-m} L \right\| ;
\end{equation*}
see, for instance, \cite[3.2.22]{GMT}.
\par 
In both cases, it is useful to recall the following. 
If $L : V \to V'$ is a linear map between two inner product spaces $V$ and $V'$, then
\begin{equation}
\label{eq.11}
\|\wedge_k L \| = \sup \left\{ \la \wedge_k L , \xi \ra : \xi \in \wedge_k V \text{ and } |\xi| = 1 \right\} .
\end{equation}
On the one hand, $\|\wedge_k L\| \leq \|L\|^k$ \cite[1.7.6]{GMT} and $\|L\| \leq \rmLip f$, with $L$ as above. 
On the other hand, if $v_1,\ldots,v_k$ are linearly independent vectors of $V$, then 
\begin{equation}
\label{eq.12}
\|\wedge_k L \| \geq \frac{|L(v_1)\wedge \ldots \wedge L(v_k)|}{|v_1 \wedge \ldots \wedge v_k|} .
\end{equation}
\par 
Finally, we observe that both coarea {formul\ae} hold true when $f$ is merely locally Lipschitzian, according to the monotone convergence theorem.
\end{Empty}

\begin{Empty}[Grassmannian]
\label{grassmann}
We let $\bG(n,m)$ be the set whose members are the $m$-dimensional vector subspaces of $\Rn$. 
With $W \in \bG(n,m)$, we associate $P_W : \Rn \to \Rn$, the orthogonal projection on $W$. 
We give $\bG(n,m)$ the structure of a compact metric space by letting $d(W_1,W_2) = \|P_{W_1}-P_{W_2}\|$, where $\|\cdot\|$ is the operator norm. 
If $W \in \bG(n,m)$, then $W^\perp \in \bG(n,n-m)$ satisfies $P_W + P_{W^\perp}=\rmid_{\Rn}$, therefore, $\bG(n,m) \to \bG(n,n-m) : W \mapsto W^\perp$ is an isometry. 
The bijective correspondence $\vphi :\bG(n,m) \to \rmHom(\Rn,\Rn) : W \mapsto P_W$ identifies $\bG(n,m)$ with the submanifold $M_{n,m} = \rmHom(\Rn,\Rn) \cap \{ L : L \circ L = L\,,\, L^*=L \text{, and } \rmtrace L = m \}$. 
There exists an open neighborhood $V$ of $M_{n,m}$ in $\rmHom(\Rn,\Rn)$ and a Lipschitzian retraction $\rho : V \to M_{n,m}$, according, for instance, to \cite[3.1.20]{GMT}. 
Therefore, if $S \subset \Rn$ and $\bW_0 : S \to \bG(n,m)$ is Lipschitzian, then there exist an open neighborhood $U$ of $S$ in $\Rn$ and a Lipschitzian extension $\wh{\bW}_0 : U \to \bG(n,m)$ of $\bW_0$. 
Indeed, $\vphi \circ \bW_0$ admits a Lipschitzian extension $\bY : \Rn \to \rmHom(\Rn,\Rn)$ -- see, e.g., \cite[2.10.43]{GMT} -- and it suffices to let $U = \bY^{-1}(V)$ and $\wh{\bW}_0 = \rho \circ \left( \bY |_U \right)$.
\end{Empty}

\begin{Empty}[Orthonormal frames]
\label{stiefel}
We let $\bV(n,m)$ be the set of orthonormal $m$-frames in $\Rn$, i.e., $\bV(n,m) = (\Rn)^{m} \cap\{ (w_1,\ldots,w_{m}) : \text{ the family } w_1,\ldots,w_{m} \text{ is orthonormal}\}$. 
We will consider it as a metric space, with its structure inherited from $(\Rn)^{m}$. 
\end{Empty}

\begin{Empty}
\label{grass.param.stief}
{\it 
Let $\calV \subset \bG(n,m)$ be a nonempty closed set such that $\rmdiam \calV < 1$. There exists a Lipschitzian map $\Xi : \calV  \to \bV(n,m)$ such that $W = \rmspan \{ \Xi_1(W),\ldots,\Xi_m(W)\}$, for every $W \in \calV$. 
}
\end{Empty}

\begin{proof}
Pick arbitrarily $W_0 \in \calV$. 
If $W \in \calV$, then the map $W_0 \to W : w \mapsto P_W(w)$ is bijective: If $w \in W_0 \setminus \{0\}$, then $\left|P_W(w)-w\right| = \left|P_W(w)-P_{W_0}(w)\right| < |w|$, thus, $P_W(w) \neq 0$.
Letting $w_1,\ldots,w_m$ be an arbitrary basis of $W_0$, it follows that, for each $W \in \calV$, the vectors $w_i(W) = P_W(w_i)$, $i=1,\ldots,m$, constitute a basis of $W$. 
Furthermore, the maps $w_i : \calV \to \Rn$ are Lipschitzian: $\left| w_i(W)-w_i(W')\right| = \left|P_W(w_i) - P_{W'}(w_i)\right| \leq d(W,W') |w_i|$. 
We apply the Gram-Schmidt process:
\begin{equation*}
\overline{w}_1(W) = w_1(W) \quad\text{ and }\quad \overline{w}_i(W) = w_i(W) - \sum_{j=1}^{i-1} \la w_i(W) , \overline{w}_j(W) \ra \overline{w}_j(W) \,,\,\, i=2,\ldots,m,
\end{equation*}
so that $\overline{w}_1(W),\ldots,\overline{w}_m(W)$ is readily an orthogonal basis of $W$ depending upon $W$ in a Lipschitzian way. 
Since each $\left|\overline{w}_i\right|$ is bounded away from zero on $\calV$, the formula $\Xi_i(W) = \left|\overline{w}_i(W)\right|^{-1}\overline{w}_i(W)$, $i=1,\ldots,m$, defines $\Xi$ with the required property.
\end{proof}

\begin{Empty}
\label{grass.param.stief.borel}
{\it 
There exists a Borel measurable map $\Xi : \bG(n,m)  \to \bV(n,m)$ with the property that $W = \rmspan \{ \Xi_1(W),\ldots,\Xi_m(W)\}$, for every $W \in \bG(n,m)$. 
}
\end{Empty}

\begin{proof}
Since $\bG(n,m)$ is compact, it can partitioned into finitely many Borel measurable sets $\calV_1,\ldots,\calV_J$, each having diameter bounded by $1/2$. 
Define $\Xi$, piecewise, to coincide on $\calV_j$ with $\Xi_j$ associated to $\rmClos \calV_j$ in \ref{grass.param.stief}, $j=1,\ldots,J$.
\end{proof}

\begin{Empty}
\label{orth.frame}
{\it 
Assume $S \subset \Rn$, $x_0 \in S$, and $\bW_0 : S \to \bG(n,m)$ is Lipschitzian. There exist an open neighborhood $U$ of $x_0$ in $\Rn$ and Lipschitzian maps $\bw_1,\ldots,\bw_m,\bv_1,\ldots,\bv_{n-m} : U \to \Rn$ with the following properties.
\begin{enumerate}
\item[(1)] For every $x \in U$, the family $\bw_1(x),\ldots,\bw_m(x),\bv_1(x),\ldots,\bv_{n-m}(x)$ is an orthonormal basis of $\Rn$.
\item[(2)] For every $x \in S \cap U$, one has
\begin{equation*}
\bW_0(x) = \rmspan \{ \bw_1(x),\ldots,\bw_m(x) \}
\end{equation*}
and
\begin{equation*}
\bW_0(x)^\perp = \rmspan \{ \bv_1(x),\ldots,\bv_{n-m}(x) \} .
\end{equation*}
\end{enumerate}
}
\end{Empty}

\begin{proof}
We let $\wh{\bW}_0 : \wh{U} \to \bG(n,m)$ be a Lipschitzian extension of $\bW_0$, where $\wh{U}$ is an open neighborhood of $S$ in $\Rn$ (recall \ref{grassmann}). 
Abbreviate $W_0 := \bW_0(x_0)$. 
Define $\calV = \bG(n,m) \cap \left\{ W : d(W,W_0) < 1/4 \right\}$ and $U = \wh{U} \cap \wh{\bW}_0^{-1}(\calV)$. 
Apply \ref{grass.param.stief} to $\rmClos \calV$ and denote by $\Xi$ the resulting Lipschitzian map $\calV \to (\Rn)^m$. 
Next, define $\calV^\perp = \bG(n,n-m) \cap \left\{ W^\perp : W \in \calV \right\}$, apply \ref{grass.param.stief} to $\rmClos \calV^\perp$, and denote by $\Xi^\perp$ the resulting Lipschitzian map $\calV^\perp \to (\Rn)^{n-m}$. 
Letting $\bw_i(x) = \left( \Xi_i \circ \wh{\bW}_0 \right)(x)$, $i=1,\ldots,m$, and $\bv_i(x) = \left( \Xi^\perp_i \circ \wh{\bW}_0 \right)(x)$, $i=1,\ldots,n-m$, completes the proof.
\end{proof}

\begin{Empty}
\label{orth.frame.borel}
{\it 
Assume $\bW_0 : \Rn \to \bG(n,m)$ is Borel measurable. There exist Borel measurable maps $\bw_1,\ldots,\bw_m,\bv_1,\ldots,\bv_{n-m} : \Rn \to \Rn$ with the following properties.
\begin{enumerate}
\item[(1)] For every $x \in \Rn$, the family $\bw_1(x),\ldots,\bw_m(x),\bv_1(x),\ldots,\bv_{n-m}(x)$ is an orthonormal basis of $\Rn$.
\item[(2)] For every $x \in\Rn$, one has
\begin{equation*}
\bW_0(x) = \rmspan \{ \bw_1(x),\ldots,\bw_m(x) \}
\end{equation*}
and
\begin{equation*}
\bW_0(x)^\perp = \rmspan \{ \bv_1(x),\ldots,\bv_{n-m}(x) \} .
\end{equation*}
\end{enumerate}
}
\end{Empty}

\begin{proof}
Choose $\Xi : \bG(n,m) \to \bV(n,m)$ and $\Xi^\perp : \bG(n,n-m) \to \bV(n,n-m)$ as in \ref{grass.param.stief.borel}. 
Letting $\left(\bw_1(x),\ldots,\bw_m(x)\right) = \left( \Xi \circ \bW_0 \right)(x)$ and $\left(\bv_1(x),\ldots,\bv_{n-m}(x)\right) = \left( \Xi^\perp \circ \bW_0^\perp \right)(x)$, for $x \in \Rn$, completes the proof.
\end{proof}

\begin{Empty}[Definition of $\bW(x)$]
\label{W}
The typical situation that arises in the remaining part of this paper is that we are given a set $S \subset \Rn$, a Lipschitzian map $\bW_0 : S \to \bG(n,m)$, and $x_0 \in S$. 
We will represent $\bW_0(x)$ and $\bW_0^\perp(x)$ in a neighborhood $U$ of $x_0$ as in \ref{orth.frame}. 
We will then further reduce the size of $U$ several times, in order that various conditions be met.
With no exception, we will denote as $\bW(x) = x + \bW_0(x)$ the affine subspace containing $x$, of direction $\bW_0(x)$, whenever $\bW_0(x)$ is defined.
\end{Empty}

\begin{Empty}[Definition of $g_{\bv_1,\ldots,\bv_{n-m},u}$ and lower bound of its coarea factor]
Given an open set $U \subset \Rn$, a Lipschitzian map $\bv : U \to \Rn$, and $u \in \Rn$, we define $g_{\bv,u} : U \to \R$ by the formula
\begin{equation*}
g_{\bv,u}(x) = \la \bv(x) , x-u  \ra .
\end{equation*} 
Clearly, $g_{\bv,u}$ is locally Lipschitzian. 
If $\bv$ is differentiable at $x \in U$, then so is $g_{\bv,u}$ and, for every $h \in \Rn$, one has
\begin{equation}
\label{eq.2}
Dg_{\bv,u}(x)(h) = \la \nabla g_{\bv,u}(x),h \ra = \la D\bv(x)(h) , x-u \ra + \la \bv(x) , h \ra .
\end{equation}
\par 
Next, we assume that we are given Lipschitzian maps $\bv_1,\ldots,\bv_{n-m} : U \to \Rn$. 
We define $g_{\bv_1,\ldots,\bv_{n-m},u} : U \to \R^{n-m}$ by the formula
\begin{equation*}
g_{\bv_1,\ldots,\bv_{n-m},u}(x) = \left( g_{\bv_1,u}(x) , \ldots , g_{\bv_{n-m},u}(x) \right) .
\end{equation*}
It is locally Lipschitzian as well. 
The relevance of $g_{\bv_1,\ldots,\bv_{n-m},u}$ stems from the following observation, assuming that $\bv_1,\ldots,\bv_{n-m}$ are associated with $\bW_0$ and $\bW$ as in \ref{orth.frame} and \ref{W}:
\begin{equation}
\label{eq.7}
\begin{split}
u \in \bW(x) & \iff u-x \in \bW_0(x) \\
& \iff \la \bv_i(x) , u-x \ra = 0 \text{, for all } i=1,\ldots,n-m \\
& \iff g_{\bv_1,\ldots,\bv_{n-m},u}(x)=0 \\
& \iff x \in g_{\bv_1,\ldots,\bv_{n-m},u}^{-1}\{0\} .
\end{split}
\end{equation}
In fact, $|g_{\bv_1,\ldots,\bv_{n-m},u}(x)| = \left| P_{\bW_0(x)^\perp}(x-u)\right|$.
\par 
Abbreviate $g = g_{\bv_1,\ldots,\bv_{n-m},u}$. 
If each $\bv_i$ is differentiable at $x \in U$, and $h \in \Rn$, then
\begin{equation*}
Dg(x)(h) = \sum_{i=1}^{n-m} Dg_{\bv_i,u}(x)(h)e_i ,
\end{equation*}
where, here and elsewhere, $e_1,\ldots,e_{n-m}$ denotes the canonical basis of $\R^{n-m}$.
Thus, if $\bv_1(x),\ldots,\bv_{n-m}(x)$ constitute an orthonormal family in $\Rn$, then
\begin{equation*}
Dg_{\bv_i,u}(x)(\bv_j(x)) = \delta_{i,j} + \veps_{i,j}(x,u),
\end{equation*}
where
\begin{equation}
\label{eq.3}
|\veps_{i,j}(x,u)| = \left| \la D\bv_i(x)(\bv_j(x)) , x-u \ra\right| \leq \left( \rmLip \bv_i \right) |x-u| ,
\end{equation}
according to \eqref{eq.2}, and, in turn,
\begin{equation*}
Dg(x)(\bv_j(x)) = \sum_{i=1}^{n-m} \left(\delta_{i,j} + \veps_{i,j}(x,u)\right) e_i .
\end{equation*}
This allows for a lower bound of the coarea factor of $g$ at $x$ as follows. 
\begin{equation*}
\begin{split}
\left\| \wedge_{n-m}Dg(x) \right\| & \geq \left| Dg(x)(\bv_1(x)) \wedge \ldots \wedge Dg(x)(\bv_{n-m}(x)) \right| \\
& = \left| \left( \sum_{i=1}^{n-m} \left(\delta_{i,1} + \veps_{i,1}(x,u)\right) e_i \right) \wedge \ldots \wedge  \left( \sum_{i=1}^{n-m} \left(\delta_{i,n-m} + \veps_{i,n-m}(x,u)\right) e_i \right) \right| \\
& = \left| \det \left( \delta_{i,j} + \veps_{i,j}(x,u) \right)_{i,j=1,\ldots,n-m} \right| .
\end{split}
\end{equation*}
In view of \eqref{eq.3}, we obtain the next lemma.
\end{Empty}

\begin{Empty}
\label{jac.g}
{\it 
Given $\Lambda > 0$ and $0 < \veps < 1$, there exists $\bdelta_{\theTheorem}(n,\Lambda,\veps) > 0$ with the following property. Assume that
\begin{enumerate}
\item[(1)] $U \subset \Rn$ is open and $u \in \Rn$;
\item[(2)] $\bv_1,\ldots,\bv_{n-m} : U \to \Rn$ are Lipschitzian;
\item[(3)] $\bv_1(x),\ldots,\bv_{n-m}(x)$ is an orthonormal family, for every $x \in U$.
\end{enumerate}
If
\begin{enumerate}
\item[(4)] $\rmLip \bv_i \leq \Lambda$, for each $i=1,\ldots,n-m$;
\item[(5)] $\rmdiam \left( U \cup \{u\} \right) \leq \bdelta_{\theTheorem}(n,\Lambda,\veps)$;
\end{enumerate}
then
\begin{equation*}
Jg_{\bv_1,\ldots,\bv_{n-m},u}(x) \geq 1 - \veps,
\end{equation*}
at $\calL^n$-almost every $x \in U$.
}
\end{Empty}

\begin{Empty}[Definition of $\pi_u$ and its relation with $g_{\bv_1,\ldots,\bv_{n-m},u}$]
\label{pi.u}
With $u \in \Rn$, we associate
\begin{equation*}
\pi_u : \bV(n,n-m) \times \Rn \to \R^{n-m} : (\xi_1,\ldots,\xi_{n-m},x) \mapsto\left( \la \xi_1,x-u\ra, \ldots, \la \xi_{n-m},x-u\ra \right).
\end{equation*}
When $(\xi_1,\ldots,\xi_{n-m}) \in \bV(n,n-m)$ is fixed, we also abbreviate as $\pi_{\xi_1,\ldots,\xi_{n-m},u}$ the map $\Rn \to \R^{n-m}$ defined by $\pi_{\xi_1,\ldots,\xi_{n-m},u}(x) = \pi_u(\xi_1,\ldots,\xi_{n-m},x)$. 
It is then rather useful to observe that, in the context described in \ref{orth.frame} and \ref{W}, the following holds:
\begin{equation}
\label{eq.4}
\pi_{\bv_1(x),\ldots,\bv_{n-m}(x),u}^{-1}\left\{ g_{\bv_1,\ldots,\bv_{n-m},u}(x) \right\} = \bW(x) .
\end{equation}
Indeed,
\begin{equation*}
\begin{split}
h \in \bW(x) & \iff h-x \in \bW_0(x) \\
& \iff \la \bv_i(x) , h-x \ra = 0 \text{, for all }  i=1,\ldots,n-m \\
& \iff \la \bv_i(x) , h-u \ra = \la \bv_i(x) , x-u \ra \text{, for all } i=1,\ldots,n-m \\
& \iff \pi_{\bv_1(x),\ldots,\bv_{n-m}(x),u}(h) = g_{\bv_1,\ldots,\bv_{n-m},u}(x) .
\end{split}
\end{equation*}
In the sequel, we will sometimes abbreviate $\xi = (\xi_1,\ldots,\xi_{n-m}) \in \bV(n,n-m)$.
It also helps to notice that, for given $\xi \in \bV(n,n-m)$ and $y \in \R^{n-m}$, the set $\pi_{\xi,u}^{-1}\{y\}$ is an $m$-dimensional affine subspace of $\Rn$.
\end{Empty}

\begin{Empty}
\label{measurability}
{\it 
If $B \in \calB(\Rn)$ and $u \in \Rn$, then
\begin{equation*}
h_{B} :  \bV(n,n-m) \times \R^{n-m} \to [0,\infty] : (\xi,y) \mapsto \calH^m\left(B  \cap \pi_{\xi,u}^{-1}\{y\} \right) 
\end{equation*}
is Borel measurable.
}
\end{Empty}

\begin{proof}
We start by showing that, when $B$ is compact, $h_{B}$ is upper semicontinuous. 
Thus, if $((\xi_k,y_k))_k$ is a sequence in $\bV(n,n-m) \times \R^{n-m}$ that converges to $(\xi,y)$, we ought to show that
\begin{equation}
\label{eq.5}
\calH^m_\infty(K) \geq \limsup_k \calH^m_\infty(K_k),
\end{equation}
where $K = B  \cap \pi_{\xi,u}^{-1}\{y\}$ and $K_k = B  \cap \pi_{\xi_k,u}^{-1}\{y_k\}$. 
This is indeed equivalent to the same inequality with $\calH^m_\infty$ replaced by $\calH^m$, according to \ref{h.m}(3) and the last sentence of \ref{pi.u}. 
Considering, if necessary, a subsequence of $(K_k)_k$ we may assume that none of the compact sets $K_k$ is empty and that the limit superior in \eqref{eq.5} is a limit. 
Since the set of nonempty compact subsets of the compact set $B$, equipped with the Hausdorff metric, is compact, the sequence $(K_k)_k$ admits a subsequence (denoted the same way) converging to a compact set $L \subset B$. 
Given $z \in L$, there are $z_k \in K_k$ converging to $z$. 
Thus, $\pi_u(\xi,z) = \lim_k \pi_u(\xi_k,z_k) = \lim_k y_k = y$. 
In other words, $z \in K$. 
Therefore, $\calH^m_\infty(K) \geq \calH^m_\infty(L)$, so that \eqref{eq.5} follows from \ref{h.m}(1).
\par 
Next, we fix $r > 0$ and we abbreviate $\calA = \calB(\Rn) \cap \{ B : h_{B \cap \bB(0,r)} \text{ is Borel measurable}\}$. 
Thus, we have just shown that $\calA$ contains the collection $\calF(\Rn)$ of all closed subsets of $\Rn$. 
Observe that if $(B_j)_j$ is an nondecreasing sequence in $\calA$ and $B = \cup_j B_j$, then $h_{B \cap \bB(0,r)} = \lim_j h_{B_j \cap \bB(0,r)}$ pointwise, thus, $B \in \calA$. 
In particular, $\Rn \in \calA$. 
Furthermore, if $B,B' \in \calA$ and $B' \subset B$, then $h_{B \setminus B'} = h_{B} - h_{B'}$, because all measures involved are finite; indeed, $h_{B \cap \bB(0,r)}(\xi,y) \leq \balpha(m)r^m$, for all $(\xi,y)$. 
Accordingly, $B \setminus B' \in \calA$. 
This means that $\calA$ is a Dynkin class. 
Since $\calF(\Rn)$ is a $\pi$-system, $\calA$ contains the $\sigma$-algebra generated by $\calF(\Rn)$, i.e., $\calB(\Rn)$ \cite[Theorem 1.6.2]{COHN}. 
Finally, if $B \in \calB(\Rn)$, then $h_B = \lim_j h_{B \cap \bB(0,j)}$ pointwise, whence $h_B$ is Borel measurable. 
\end{proof}

\begin{Empty}
\label{second.measurability}
{\it 
Assume $B \in \calB(\Rn)$ and $\bW_0 : \Rn \to \bG(n,m)$ is Borel measurable. The following function is Borel measurable.
\begin{equation*}
\Rn \to [0,\infty] : x \mapsto \calH^m\left( B \cap \bW(x) \right).
\end{equation*}
}
\end{Empty}

\begin{proof}
Let $h_{\bW,B}$ denote this function. 
Let $\bv_1,\ldots,\bv_{n-m} : \Rn \to \Rn$ be Borel measurable maps associated with $\bW_0$ in \ref{orth.frame.borel}. 
Fix $u \in \Rn$ arbitrarily. 
Define
\begin{equation*}
\Upsilon : \Rn \to \bV(n,n-m) \times \R^{n-m} : x \mapsto \left(\bv_1(x),\ldots,\bv_{n-m}(x),g_{\bv_1(x),\ldots,\bv_{n-m}(x),u}(x) \right)
\end{equation*}
and notice that
\begin{equation*}
h_{\bW,B} = h_{B} \circ \Upsilon
\end{equation*}
(where $h_{B}$ is the function associated with $B$ and $u$ in \ref{measurability}), according to \eqref{eq.4}. 
One notes that $\Upsilon$ is Borel measurable; whence, the conclusion ensues from \ref{measurability}.
\end{proof}

\begin{Empty}[Definition of $\phi_{E,\bW}$]
\label{def.phi}
Let $\bW_0 : \Rn \to \bG(n,m)$ be Borel measurable and $E \in \calB(\Rn)$ be such that $\calL^n(E) < \infty$. 
For each $B \in \calB(\Rn)$, we define
\begin{equation*}
\phi_{E,\bW}(B) = \int_E \calH^m \left( B \cap \bW(x) \right) d\calL^n(x) .
\end{equation*}
This is well-defined, according to \ref{second.measurability}. 
It is easy to check that $\phi_{E,\bW}$ is a locally finite -- hence, $\sigma$-finite -- Borel measure on $\Rn$; indeed, $\phi_{E,\bW}(B) \leq \balpha(m) (\rmdiam B)^m \calL^n(E)$. 
\par 
To close this section, we discuss the relevance of $\phi_{E,\bW}$ to the problem of existence of ``nearly Nikod\'ym sets''.
\end{Empty}

\begin{Empty}[Definition of Nearly Nikod\'ym set]
Let $E \in \calB(\Rn)$. We say that $B \in \calB(E)$ is {\bf nearly $m$-Nikod\'ym in $E$} if
\begin{enumerate}
\item[(1)] $\calL^n(B) > 0$;
\item[(2)] For $\calL^n$-almost each $x \in E$, there is $W \in \bG(n,m)$ such that $\calH^m\left( B \cap (x+W) \right) = 0$.
\end{enumerate}
\par
In case $n=2$, $m=1$, and $E=[0,1] \times [0,1]$, the existence of such a $B$ (with $\calL^2(B)=1$) was established by Nikod\'ym \cite{NIK.27}, see also \cite[Chapter 8]{GUZMAN.1981}. 
For arbitrary $n \geq 2$ and $m=n-1$, the existence of such a $B$ was established by Falconer \cite{FAL.86}. 
In fact, in both cases, these authors established the stronger condition that, for every $x \in B$, $\calH^m(B \cap (x+W))=0$ can be replaced by $B \cap (x+W) = \{x\}$. 
Thus, in case $1 \leq m< n-1$, letting $B$ be a set exhibited by Falconer, if $x \in B$ and $W \subset \bG(n,n-1)$ is such that $B \cap (x+W) = \{x\}$, then picking arbitrarily $V \in \bG(n,m)$ such that $V \subset W$, we see that $B \cap (x+V)=\{x\}$. 
Whence, $B$ is also nearly $m$-Nikod\'ym in $B$. 
\par 
Assuming that $\bW_0 : E \to \bG(n,m)$ is Borel measurable, we say that $B \in \calB(E)$ is {\bf nearly $m$-Nikod\'ym in $E$ relative to $\bW$} if
\begin{enumerate}
\item[(1)] $\calL^n(B) > 0$;
\item[(2)] For $\calL^n$-almost each $x \in E$, one has $\calH^m\left( B \cap \bW(x) \right) = 0$.
\end{enumerate}
\end{Empty}

\begin{Empty}
{\it 
Let $E \in \calB(\Rn)$ have finite $\calL^n$ measure and $\bW_0 : \Rn \to \bG(n,m)$ be Borel measurable. The following are equivalent.
\begin{enumerate}
\item[(1)] $\calL^n |_{\calB(E)}$ is absolutely continuous with respect to $\phi_{E,\bW}|_{\calB(E)}$.
\item[(2)] There does not exist a nearly $m$-Nikod\'ym set in $E$ relative to $\bW$.
\end{enumerate}
}
\end{Empty}

\begin{proof}
A set $B \in \calB(E)$ such that $\phi_{E,\bW}(B)=0$ and $\calL^n(B) > 0$ is, by definition, a nearly $m$-Nikod\'ym set relative to $\bW$. 
Condition (1) is equivalent to their nonexistence.
\end{proof}

\begin{Empty}
\label{nik.set}
{\it 
Assume that $E \in \calB(\Rn)$ and that $B \in \calB(E)$ is nearly $m$-Nikod\'ym. The following hold.
\begin{enumerate}
\item[(1)] There exists $\bW_0 : \Rn \to \bG(n,m)$ Borel measurable such that $B$ is nearly $m$- Nikod\'ym in $E$ relative to $\bW$.
\item[(2)] There exists $C \subset B$ compact and $\overline{\bW}_0 : \Rn \to \bG(n,m)$ continuous such that $C$ is nearly $m$-Nikod\'ym in $C$ relative to $\overline{\bW}$.
\end{enumerate}
}
\end{Empty}

\begin{proof}
Define a Borel measurable map $\bxi : \bG(n,m) \to \bV(n,n-m)$ by $\bxi(W) = \Xi \left( W^\perp \right)$, where $\Xi : \bG(n,n-m) \to \bV(n,n-m)$ is as in \ref{grass.param.stief.borel}. 
Choose arbitrarily $u \in \Rn$ and define a Borel measurable map
\begin{multline*}
\Upsilon :  E \times \bG(n,m) \to  \bV(n,n-m) \times \R^{n-m} \\ (x,W) \mapsto \left(\bxi(W) , \la \bxi_1(W),x-u  \ra , \ldots, \la \bxi_{n-m}(W),x-u \ra \right) .
\end{multline*} 
Similarly to \eqref{eq.4}, observe that
\begin{equation*}
x+W = \pi_{\bxi(W),u}^{-1} \left\{ \left( \la \bxi_1(W),x-u  \ra , \ldots, \la \bxi_{n-m}(W),x-u \ra \right) \right\},
\end{equation*}
for every $(x,W) \in E \times \bG(n,m)$. 
We infer from \ref{measurability} that
\begin{equation*}
 h_{B} \circ \Upsilon :E \times \bG(n,m) \to [0,\infty]:(x,W) \mapsto \calH^m \left( B  \cap (x+W) \right)
\end{equation*}
is Borel measurable. 
Thus, the set
\begin{equation*}
\calE = E \times \bG(n,m) \cap \left\{ (x,W) : \calH^m \left( B \cap (x+W) \right) = 0 \right\}
\end{equation*}
is Borel measurable as well. 
The set $N = E \cap \{ x : \calE_x = \emptyset \}$ is coanalytic and $\calL^n(N)=0$, by assumption.
By virtue of von Neumann's selection theorem \cite[5.5.3]{SRIVASTAVA}, there exists a universally measurable map $\tilde{\bW}_0 : E \setminus N \to \bG(n,m)$ such that $\tilde{\bW}_0(x) \in \calE_x$, for every $x \in E \setminus N$, i.e., $\calH^m \left( B \cap \left( x + \tilde{\bW}_0(x) \right) \right) = 0$. 
We extend $\tilde{\bW}_0$ to be an arbitrary constant on $N \cup (\Rn \setminus E)$. 
This makes $\tilde{\bW}_0$ an $\calL^n$-measurable map defined on $\Rn$. 
Therefore, it is equal $\calL^n$-almost everywhere to a Borel measurable map $\bW_0 : \Rn \to \bG(n,m)$. 
This proves (1).
\par 
In order to prove (2), we recall \ref{grassmann}, specifically, the retraction $\rho : V \to M_{n,m}$ and the homeomorphic identification $\vphi : \bG(n,m) \to M_{n,m}$. 
Owing to the compactness of $M_{n,m}$, there are finitely many open balls $U_j$, $j=1,\ldots,J$, whose closures are contained in $V$ and whose union contains $M_{n,m}$. 
Since $\calL^n(B) > 0$, there exists $j \in \{1,\ldots,J\}$ such that $\calL^n \left( B \cap E_j \right) > 0$, where $E_j = \left( \vphi \circ \bW_0 \right)^{-1}(U_j)$. 
It follows from Lusin's theorem \cite[2.5.3]{GMT} that there exists a compact set $C \subset B \cap E_j$ such that $\calL^n(C) > 0$ and the restriction $\bW_0|_C$ is continuous. 
The map $\vphi \circ \bW_0|_C$ takes its values in the closed ball $\rmClos U_j$, therefore, it admits a continuous extension $\bY_0 : \Rn \to \rmClos U_j \subset V$. 
Letting $\overline{\bW}_0 = \vphi^{-1} \circ \rho \circ \bY_0$ completes the proof.
\end{proof}

\section{Common setting}

\begin{Empty}[Setting for the next three sections]
\label{31}
In the next three sections, we shall assume the following.
\begin{enumerate}
\item $E \subset \Rn$ is Borel measurable and $\calL^n(E) < \infty$.
\item $U \subset \Rn$ is open and $E \subset U$.
\item $B \subset \Rn$ is Borel measurable.
\item $\bW_0 : U \to \bG(n,m)$ is Lipschitzian.
\item $\bW(x) = x + \bW_0(x)$, for each $x \in U$.
\item $\Lambda > 0$.
\item $\bw_1,\ldots,\bw_m : U \to \Rn$ and $\rmLip \bw_i \leq \Lambda$, $i=1,\ldots,m$.
\item $\bv_1,\ldots,\bv_{n-m} : U \to \Rn$ and $\rmLip \bv_i \leq \Lambda$, $i=1,\ldots,n-m$.
\item $\bW_0(x) = \rmspan \{ \bw_1(x),\ldots,\bw_m(x)\}$, for every $x \in U$.
\item $\bW_0(x)^\perp = \rmspan \{ \bv_1(x),\ldots,\bv_{n-m}(x)\}$, for every $x \in U$.
\item $\bw_1(x),\ldots,\bw_m(x),\bv_1(x),\ldots,\bv_{n-m}(x)$ constitute an orthonormal basis of $\Rn$, for every $x \in U$.
\end{enumerate}
\end{Empty}

\section{Two fibrations}

\begin{Empty}[A fibered space associated with $E,B,\bw_1,\ldots,\bw_m$]
\label{fibration.1}
We define 
\begin{equation*}
F : E \times \Rm \to \Rn \times \Rn : (x,t_1,\ldots,t_m) \mapsto \left( x , x + \sum_{i=1}^m t_i \bw_i(x) \right)
\end{equation*}
as well as 
\begin{equation*}
\Sigma = F( E \times \Rm) = \Rn \times \Rn \cap \left\{ (x,u) : x \in E \text{ and } u \in \bW(x) \right\}.
\end{equation*}
We will oftentimes abbreviate $(x,t)=(x,t_1,\ldots,t_m)$.
It is obvious that $F$ is locally Lipschitzian and, therefore, $\Sigma$ is countably $(n+m)$-rectifiable and $\calH^{n+m}$-measurable. 
We also consider the two canonical projections
\begin{equation*}
\pi_1 : \Rn \times \Rn \to \Rn : (x,u) \mapsto x \quad\text{ and }\quad \pi_2 : \Rn \times \Rn \to \Rn : (x,u) \mapsto u,
\end{equation*}
as well as the set
\begin{equation*}
\Sigma_B = \Sigma \cap \pi_2^{-1}(B) = \Rn \times \Rn \cap \left\{ (x,u) : x \in E \text{ and } u \in B \cap \bW(x) \right\},
\end{equation*}
which also is, clearly, countably $(n+m)$-rectifiable and $\calH^{n+m}$-measurable. 
With the prospect of applying the coarea formula to $\Sigma_B$ and $\pi_1$, and to $\Sigma_B$ and $\pi_2$, respectively, we observe that, for each fixed $x \in E$,
\begin{equation*}
\Sigma_B \cap \pi_1^{-1}\{x\} = \{x\} \times \big( \Rn \cap \{ u : u \in B \cap \bW(x) \}\big) ,
\end{equation*}
so that
\begin{equation}
\label{eq.6}
\calH^m \left(\Sigma_B \cap \pi_1^{-1}\{x\} \right) = \calH^m \left( B \cap \bW(x)\right) ,
\end{equation}
and that, for each fixed $u \in B$,
\begin{equation*}
\begin{split}
\Sigma_B \cap \pi_2^{-1}\{u\} & =  \big( \Rn \cap \{ x : x \in E \text{ and } u \in \bW(x) \} \big)\times \{u\}\\
& = \left( \Rn\cap \left\{ x : x \in E \cap g_{\bv_1,\ldots,\bv_{n-m},u}^{-1}\{0\} \right\} \right) \times \{u\},
\end{split}
\end{equation*}
according to \eqref{eq.7}, so that
\begin{equation}
\label{eq.8}
\calH^m \left( \Sigma_B \cap \pi_2^{-1}\{u\}\right) = \calH^m \left( E \cap g_{\bv_1,\ldots,\bv_{n-m},u}^{-1}\{0\}\right) ,
\end{equation}
whenever $u \in B$. 
It now follows from the coarea formula that
\begin{equation}
\label{eq.9}
\int_{\Sigma_B} J_\Sigma \pi_1 d\calH^{n+m} = \int_E \calH^m \left( B \cap \bW(x) \right) d\calL^n(x) = \phi_{E,\bW}(B)
\end{equation}
and
\begin{equation}
\label{eq.10}
\int_{\Sigma_B} J_\Sigma \pi_2 d\calH^{n+m} = \int_B \calH^m \left( E \cap g_{\bv_1,\ldots,\bv_{n-m},u}^{-1}\{0\}\right) d\calL^n(u) .
\end{equation}
For these formul\ae{} to be useful, we need to establish bounds for the coarea Jacobian factors $J_\Sigma \pi_1$ and $J_\Sigma \pi_2$.
In order to do so, we notice that if $\Sigma \ni (x,u) = F(x,t)$, $F$ is differentiable at $(x,t)$, i.e., each $\bw_i$ is differentiable at $x$, $i=1,\ldots,m$, then the approximate tangent space $T_{(x,u)}\Sigma$ exists and is generated by the following $n+m$ vectors of $\Rn \times \Rn$:
\begin{equation*}
\begin{split}
\sum_{p=1}^n \la w_j , e_p \ra \frac{\partial F}{\partial x_p}(x,t) &  = \left( w_j, w_j + \sum_{i=1}^m t_i D \bw_i(x)(w_j) \right) \,, \,j=1,\ldots,n, \\
\frac{\partial F}{\partial t_k}(x,t) & = \left( 0 , \bw_k(x) \right) \,, \,k=1,\ldots,m ,
\end{split}
\end{equation*}
where $w_1,\ldots,w_n$ is an arbitrary basis of $\Rn$.
As usual, $e_1,\ldots,e_n$ denotes the canonical basis of $\Rn$.
\end{Empty}

\begin{Empty}[Coarea Jacobian factor of $\pi_1$] 
\label{factor.pi.1}
{\it 
For $\calH^{n+m}$-almost every $(x,u) \in \Sigma$, one has
\begin{equation*}
 \left( 1 + m^2 \Lambda^2 |x-u|^2 \right)^{-\frac{m}{2}}\left( 2 + 2m\Lambda |x-u| + m^2\Lambda^2 |x-u|^2 \right)^{-\frac{n-m}{2}} 
\leq J_\Sigma \pi_1(x,u) \leq 1 .
\end{equation*}
}
\end{Empty}

\begin{proof}
We recall \ref{coarea}.
That the right hand inequality be valid follows from $\rmLip \pi_1 = 1$. 
Regarding the left hand inequality, fix $(x,u) = F(x,t)$ such that $F$ is differentiable at $(x,t)$ and let $L : T_{(x,u)}\Sigma \to \R^n$ denote the restriction of $\pi_1$ to $T_{(x,u)}\Sigma$. 
Define $w_j = \bw_j(x)$, $j=1,\ldots,m$, and $w_j = \bv_{j-m}(x)$, $j=m+1,\ldots,n$.
Put 
\begin{equation*}
v_j = \sum_{p=1}^n \la w_j,e_p \ra\frac{\partial F}{\partial x_p}(x,t) - \frac{\partial F}{\partial t_j}(x,t) = \left( w_j , \sum_{i=1}^m t_i D\bw_i(x)(w_j) \right) ,
\end{equation*}
$j=1,\ldots,m$, and
\begin{equation*}
v_j = \sum_{p=1}^n \la w_j,e_p \ra\frac{\partial F}{\partial x_p}(x,t) = \left( w_j , w_j + \sum_{i=1}^m t_i D\bw_i(x)(w_j) \right) ,
\end{equation*}
$j=m+1,\ldots,n$.
Recall \eqref{eq.12} that
\begin{equation*}
J_\Sigma \pi_1(x,u) = \| \wedge_n L \| \geq \frac{1}{|v_1 \wedge \ldots \wedge v_n|},
\end{equation*}
since $L(v_j)=w_j$, $j=1\ldots,n$. 
Now, notice that, for $j=1,\ldots,m$,
\begin{equation*}
\left| v_j\right|^2 = |w_j|^2 + \left|  \sum_{i=1}^m t_i D \bw_i(x)(w_j)\right|^2 
\leq 1 + m^2\Lambda^2|t|^2 ,
\end{equation*}
whereas, for $j=m+1,\ldots,n$,
\begin{multline*}
\left| v_j \right|^2 = |w_j|^2 + \left| w_j + \sum_{i=1}^m t_i D \bw_i(x)(w_j)\right|^2 
\leq 2 + 2 \left|\sum_{i=1}^m t_i D \bw_i(x)(w_j) \right| + \left|\sum_{i=1}^m t_i D \bw_i(x)(w_j) \right|^2 \\
\leq 2 + 2 m \Lambda |t| + m^2\Lambda^2|t|^2 .
\end{multline*}
Since $u = x + \sum_{i=1}^m t_i \bw_i(x)$, one also has
\begin{equation*}
|u-x|^2 = \left| \sum_{i=1}^m t_i \bw_i(x) \right|^2 = |t|^2 .
\end{equation*}
Finally,
\begin{multline*}
|v_1 \wedge \ldots \wedge v_n | 
 \leq \left( 1 + m^2 \Lambda^2 |x-u|^2 \right)^\frac{m}{2}\left( 2 + 2 m \Lambda |x-u| + m^2\Lambda^2|x-u|^2 \right)^\frac{n-m}{2},
\end{multline*}
and the conclusion follows.
\end{proof}

\begin{Empty}
\label{34}
{\it 
Let $1 \leq q \leq n-1$ be an integer and let $v_1,\ldots,v_q$ be an orthonormal family in $\Rn$. There exists $\lambda \in \Lambda(n,q)$ such that 
\begin{equation*}
\left| \det \left( \la v_k , e_{\lambda(j)} \ra \right)_{j,k=1,\ldots,q}\right| \geq \bin{n}{q}^{-\frac{1}{2}} \,.
\end{equation*}
Here, $\Lambda(n,q)$ denotes the set of increasing maps $\{1,\ldots,q\} \to \{1,\ldots,n\}$.
}
\end{Empty}

\begin{proof}
We define a linear map $L : \R^q \to \Rn : (s_1,\ldots,s_q) \mapsto \sum_{k=1}^q s_k v_k$ and we observe that $L$ is an isometry. 
Therefore, its area Jacobian factor $JL=1$, by definition. 
Now, also
\begin{equation*}
(JL)^2 = \sum_{\lambda \in \Lambda(n,q)}\left| \det \left( \la v_k , e_{\lambda(j)} \ra \right)_{j,k=1,\ldots,q}\right|^2,
\end{equation*}
according to the Binet-Cauchy formula \cite[Chapter 3 \S 2 Theorem 4]{EVANS.GARIEPY}. 
The conclusion easily follows.
\end{proof}

\begin{Empty}[Coarea Jacobian factor of $\pi_2$] 
\label{factor.pi.2}
{\it 
The following hold.
\begin{enumerate}
\item 
For $\calH^{n+m}$-almost every $(x,u) \in \Sigma$, one has
\begin{equation*}
\frac{\bin{n}{n-m}^{-\frac{1}{2}} - m(n-m) \Lambda |x-u| \left( 1 + m \Lambda |x-u|\right)^{n-m-1}}{\left( 2 + 2 m \Lambda |x-u| + m^2\Lambda^2|x-u|^2 \right)^\frac{n-m}{2}} \leq J_\Sigma \pi_2(x,u) \leq 1 .
\end{equation*}
\item 
For $\calH^{n+m}$-almost every $(x,u) \in \Sigma$, one has $J_\Sigma \pi_2(x,u) > 0$.
\end{enumerate}
}
\end{Empty}

\begin{proof}
Clearly, $J_\Sigma \pi_2(x,u) \leq (\rmLip \pi_2)^n \leq 1$. 
Regarding the left hand inequality, fix $(x,u) = F(x,t)$ such that $F$ is differentiable at $(x,t)$ and, this time, let $L : T_{(x,u)}\Sigma \to \R^n$ denote the restriction of $\pi_2$ to $T_{(x,u)}\Sigma$. 
We will now define a family of $n$ vectors $v_1,\ldots,v_n$ belonging to $T_{(x,u)}\Sigma$. 
We choose $v_k = \frac{\partial F}{\partial t_k}(x,t) = (0,\bw_k(x))$, for $k=1,\ldots,m$. 
For choosing the $n-m$ remaining vectors, we proceed as follows. 
We select $\lambda \in \Lambda(n,n-m)$ as in \ref{34} applied with $q=n-m$ to $\bv_1(x),\ldots,\bv_{n-m}(x)$ and we let $v_{m+j} = \frac{\partial F}{\partial x_{\lambda(j)}}(x,t)$, $j=1,\ldots,n-m$. 
Recalling \eqref{eq.12}, we have
\begin{equation*}
J_\Sigma \pi_1(x,u) = \| \wedge_n L \| \geq \frac{|L(v_1) \wedge \ldots \wedge L(v_n)|}{|v_1 \wedge \ldots \wedge v_n|} .
\end{equation*}
As in the proof of \ref{factor.pi.1}, we find that
\begin{equation*}
|v_1 \wedge \ldots \wedge v_n| \leq \left( 2 + 2 m \Lambda |x-u| + m^2\Lambda^2|x-u|^2 \right)^\frac{n-m}{2}
\end{equation*}
and it remains only to find a lower bound for $|L(v_1) \wedge \ldots \wedge L(v_n)|$. 
The latter equals the absolute value of the determinant of the matrix of coefficients of $L(v_i)$, $i=1,\ldots,n$, with respect to any orthonormal basis of $\Rn$. 
We choose the basis $\bw_1(x),\ldots,\bw_m(x),\bv_1(x),\ldots,\bv_{n-m}(x)$. 
Thus,
\begin{equation}
\label{eq.13}
\begin{split}
|L(v_1) \wedge \ldots \wedge L(v_n)| & = \left| \det \left(
\begin{array}{c c c | c}
1 & \cdots & 0 & * \\
\vdots & \ddots & \vdots & \vdots \\
0 & \cdots & 1 & * \\
\hline
0 & \cdots & 0 & \left\la e_{\lambda(j)} + \sum_{i=1}^m t_i \frac{\partial \bw_i}{\partial x_{\lambda(j)}}(x) , \bv_k(x) \right\ra \\
\end{array}
\right)\right| \\
& = \left| \det \left( \left\la e_{\lambda(j)} + \sum_{i=1}^m t_i \frac{\partial \bw_i}{\partial x_{\lambda(j)}}(x) , \bv_k(x) \right\ra\right)_{j,k=1,\ldots,n-m}\right| .
\end{split}
\end{equation}
Abbreviate
\begin{equation*}
h_{\lambda(j)} = \sum_{i=1}^m t_i \frac{\partial \bw_i}{\partial x_{\lambda(j)}}(x)
\end{equation*}
and observe that $\left| h_{\lambda(j)} \right| \leq m \Lambda |t| = m \Lambda |x-u|$, $j=1,\ldots,n-m$ (recall the proof of \ref{factor.pi.1}). 
It remains only to remember that $\lambda$ has been selected in order that
\begin{equation*}
\left| \det \left( \left\la e_{\lambda(j)}, \bv_k(x)\right\ra\right)_{j,k=1,\ldots,n-m} \right| \geq \bin{n}{n-m}^{-\frac{1}{2}}
\end{equation*}
and to infer from the multilinearity of the determinant that
\begin{multline*}
\left| \det \left( \left\la e_{\lambda(j)} , \bv_k(x)\right\ra + \left\la h_{\lambda(j)} , \bv_k(x)\right\ra\right)_{j,k} - \det \left( \left\la e_{\lambda(j)} , \bv_k(x)\right\ra\right)_{j,k}\right|  \\
\leq (n-m) \left( \max_{j=1,\ldots,n-m} \left| h_{\lambda(j)}\right|\right)\left( \max_{j=1,\ldots,n-m} \left| h_{\lambda(j)}\right| + \left| e_{\lambda(j)}\right|\right)^{n-m-1} \\
\leq (n-m)m\Lambda|x-u| \left( 1 + m \Lambda |x-u|\right)^{n-m-1} .
\end{multline*}
This completes the proof of conclusion (1).
\par 
Let $E_0$ denote the subset of $E$ consisting of those $x$ such that each $\bw_i$, $i=1,\ldots,m$, is differentiable at $x$. 
Thus, $E_0$ is Borel measurable and so is
\begin{multline*}
A = E_0 \times \Rm \cap \bigg\{ (x,t) : \\ \rmrank 
\left(
\begin{array}{c|c|c|c|c|c}
\bw_1(x) & \cdots & \bw_m(x) & e_1 + \sum_{i=1}^m t_i \frac{\partial \bw_i}{\partial x_1}(x) & \ldots & e_n + \sum_{i=1}^m t_i \frac{\partial \bw_i}{\partial x_n}(x) 
\end{array}
\right) < n
\bigg\}.
\end{multline*}
If $(x,u) \in \Sigma \setminus F(A)$, then the restriction of $\pi_2$ to $T_{(x,u)}\Sigma$ is surjective and, therefore, $J_\Sigma \pi_2(x,u) > 0$. 
Thus, we ought to show that $\calH^{n+m}(F(A))=0$. 
Since $F$ is Lipschitzian, it suffices to establish that $\calL^{n+m}(A)=0$. 
As $A$ is Borel measurable, it is enough to prove that $\calL^m(A_x)=0$, for every $x \in E_0$, according to Fubini's theorem. 
Fix $x \in E_0$. 
As in the proof of conclusion (1), choose $\lambda \in \Lambda(n,n-m)$ associated with $\bv_1(x),\ldots,\bv_{n-m}(x)$, according to \ref{34}. 
Based on \eqref{eq.13}, we see that
\begin{equation*}
A_x \subset \Rm \cap \left\{ t :  \det \left( \left\la e_{\lambda(j)} + \sum_{i=1}^m t_i \frac{\partial \bw_i}{\partial x_{\lambda(j)}}(x) , \bv_k(x) \right\ra\right)_{j,k=1,\ldots,n-m}=0\right\}.
\end{equation*}
The set on the right is of the form $S_x = \Rm \cap \{ (t_1,\ldots,t_m ) : P_x(t_1,\ldots,t_m) = 0 \}$, for some polynomial $P_x \in \R[T_1,\ldots,T_m]$, and $P_x(0,\ldots,0) = \det\left( \la e_{\lambda(j)},\bv_k(x) \ra\right)_{j,k=1,\ldots,n-m} \neq 0$. 
It follows that $\calL^m(S_x)=0$ -- see, e.g., \cite[2.6.5]{GMT} -- and the proof of (2) is complete.
\end{proof}

\begin{Proposition}
\label{AC.1}
The measure $\phi_{E,\bW}$ is absolutely continuous with respect to $\calL^n$.
\end{Proposition}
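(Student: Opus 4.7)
The plan is to exploit the two identities
\begin{equation*}
\int_{\Sigma_B} J_\Sigma \pi_1 \, d\calH^{n+m} = \phi_{E,\bW}(B)
\qquad\text{and}\qquad
\int_{\Sigma_B} J_\Sigma \pi_2 \, d\calH^{n+m} = \int_B \calH^m\!\left(E \cap g_{\bv_1,\ldots,\bv_{n-m},u}^{-1}\{0\}\right) d\calL^n(u)
\end{equation*}
established in \ref{fibration.1} (formul\ae{} \eqref{eq.9} and \eqref{eq.10}), combined with the positivity of $J_\Sigma \pi_2$ from \ref{factor.pi.2}(2). Assume $\calL^n(B)=0$; the goal is to show $\phi_{E,\bW}(B)=0$.

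First, because the right hand side of the second identity is an integral with respect to $\calL^n$ over a $\calL^n$-null set, it vanishes. Hence $\int_{\Sigma_B} J_\Sigma \pi_2 \, d\calH^{n+m} = 0$. Next, since $\Sigma_B \subset \Sigma$ and $J_\Sigma \pi_2 > 0$ at $\calH^{n+m}$-almost every point of $\Sigma$ by \ref{factor.pi.2}(2), the vanishing of this integral forces $\calH^{n+m}(\Sigma_B) = 0$. Finally, since $J_\Sigma \pi_1 \leq 1$ by \ref{factor.pi.1}, the first identity yields
\begin{equation*}
\phi_{E,\bW}(B) = \int_{\Sigma_B} J_\Sigma \pi_1 \, d\calH^{n+m} \leq \calH^{n+m}(\Sigma_B) = 0,
\end{equation*}
which is the desired absolute continuity.

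There is essentially no obstacle here: the whole content of the proposition is packaged in the positivity statement \ref{factor.pi.2}(2), whose verification (via the Borel set $A \subset E_0 \times \Rm$ of points where the rank drops, Fubini, and the polynomial non-vanishing of $P_x$ at the origin) was the substantive step done in the previous item. The only small care needed is to observe that the function $u \mapsto \calH^m(E \cap g_{\bv_1,\ldots,\bv_{n-m},u}^{-1}\{0\})$ is a nonnegative $\calL^n$-measurable function (as guaranteed by \ref{measurability} and \ref{second.measurability}), so that integration over a null set indeed yields zero.
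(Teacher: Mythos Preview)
Your proof is correct and follows essentially the same route as the paper: use \eqref{eq.10} and $\calL^n(B)=0$ to get $\int_{\Sigma_B} J_\Sigma \pi_2\, d\calH^{n+m}=0$, invoke \ref{factor.pi.2}(2) to conclude $\calH^{n+m}(\Sigma_B)=0$, and then read off $\phi_{E,\bW}(B)=0$ from \eqref{eq.9}. The only cosmetic difference is that you bound $J_\Sigma \pi_1 \leq 1$ via \ref{factor.pi.1} to pass from the integral to $\calH^{n+m}(\Sigma_B)$, whereas the paper simply uses that an integral over an $\calH^{n+m}$-null set vanishes.
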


\begin{proof}
Let $B \in \calB(\Rn)$ be such that $\calL^n(B)=0$. 
It follows from \eqref{eq.10} that
\begin{equation*}
\int_{\Sigma_B} J_\Sigma \pi_2 d\calH^{n+m} = 0 .
\end{equation*}
It next follows from \ref{factor.pi.2}(2) that $\calH^{n+m}(\Sigma_B)=0$. 
In turn, \eqref{eq.9} implies that
\begin{equation*}
\phi_{E,\bW}(B) = \int_{\Sigma_B} J_\Sigma \pi_1 d\calH^{n+m} = 0 .
\end{equation*}
\end{proof}

\begin{Empty}[Definition of $\calZ_E \bW$]
\label{def.Z}
Note that $\phi_{E,\bW}$ is a $\sigma$-finite Borel measure on $\Rn$ (see \ref{def.phi}) and it is absolutely continuous with respect to $\calL^n$ (see \ref{AC.1}). 
It then ensues from the Radon-Nikod\'ym theorem that there exists a Borel measurable function
\begin{equation*}
\calZ_E \bW : \Rn \to \R
\end{equation*}
such that, for every $B \in \calB(\Rn)$, one has
\begin{equation*}
\int_E \calH^m \left( B \cap \bW(x) \right) d\calL^n(x) = \phi_{E,\bW}(B) = \int_B \calZ_E \bW(u) d\calL^n(u) .
\end{equation*}
Furthermore, $\calZ_E \bW$ is univoquely defined (only) up to an $\calL^n$ null set. 
This will not affect the reasoning in this paper. 
Each time we write $\calZ_E \bW$, we mean {\it one} particular Borel measurable function satisfying the above equality, for every $B \in \calB(\Rn)$.
\end{Empty}

\begin{Empty}[Definition of $\calY_E^0 \bW$]
\label{def.Y0}
We define $\calY^0_E\bW : \Rn \to [0,\infty]$ by the formula
\begin{equation}
\label{eq.14}
\calY_E^0 \bW(u) = \calH^m \left( E \cap g_{\bv_1,\ldots,\bv_{n-m},u}^{-1}\{0\}\right) ,
\end{equation}
$u \in \Rn$. 
Letting $B = \Rn$ in \eqref{eq.8}, one infers from \ref{coarea} that $\calY^0_E\bW$ is $\calL^n$-measurable. 
Using the estimates we have established so far regarding coarea Jacobian factors, we now show that $\calZ_E \bW$ and $\calY_E^0 \bW$ are comparable, when the diameter of $E$ is not too large.
\end{Empty}

\begin{Proposition}
\label{Z.1}
Given $0 < \veps < 1$, there exists $\bdelta_{\theTheorem}(n,\Lambda,\veps) > 0$ with the following property. If $\rmdiam E \leq \bdelta_{\theTheorem}(n,\Lambda,\veps)$, then
\begin{equation*}
(1-\veps) 2^{-\frac{n-m}{2}} \calY^0_E \bW(u) \leq \calZ_E \bW(u) \leq (1+\veps) 2^\frac{n-m}{2} \bin{n}{n-m}^\frac{1}{2	}\calY^0_E \bW(u),
\end{equation*}
for $\calL^n$-almost every $u \in E$.
\end{Proposition}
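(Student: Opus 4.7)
The idea is to compare the two fundamental integral identities supplied by the two fibrations of $\Sigma_B$, namely \eqref{eq.9}
$$\int_B \calZ_E \bW(u) \, d\calL^n(u) \;=\; \phi_{E,\bW}(B) \;=\; \int_{\Sigma_B} J_\Sigma \pi_1 \, d\calH^{n+m}$$
and the combination of \eqref{eq.10} with \eqref{eq.8} (and \ref{def.Y0}), which gives
$$\int_B \calY^0_E \bW(u)\, d\calL^n(u) \;=\; \int_{\Sigma_B} J_\Sigma \pi_2\, d\calH^{n+m},$$
both valid for every $B \in \calB(\Rn)$. Thus a uniform pointwise comparison of $J_\Sigma \pi_1$ and $J_\Sigma \pi_2$ on $\Sigma_B$, transferred to an integral comparison, and then propagated to the a.e.\ statement by varying $B$, will give the result.

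First, I would restrict attention to $B \in \calB(E)$, so that $\Sigma_B \subset E \times E$ and every $(x,u) \in \Sigma_B$ satisfies $|x-u| \leq \rmdiam E \leq \delta$, where $\delta > 0$ is to be fixed. Applying \ref{factor.pi.1} I get, at $\calH^{n+m}$-a.e.\ point of $\Sigma_B$,
$$J_\Sigma \pi_1(x,u) \;\geq\; (1+m^2\Lambda^2 \delta^2)^{-m/2}(2 + 2m\Lambda \delta + m^2 \Lambda^2 \delta^2)^{-(n-m)/2} \;\longrightarrow\; 2^{-(n-m)/2}$$
as $\delta \to 0^+$, together with the trivial upper bound $J_\Sigma \pi_1 \leq 1$. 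Similarly, \ref{factor.pi.2}(1) yields $J_\Sigma \pi_2 \leq 1$ and a lower bound that tends to $\binom{n}{n-m}^{-1/2} 2^{-(n-m)/2}$ as $\delta \to 0^+$.

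Combining these, the ratio $J_\Sigma \pi_1/J_\Sigma \pi_2$ satisfies, uniformly on $\Sigma_B$,
$$(1-\veps)\,2^{-(n-m)/2} \;\leq\; \frac{J_\Sigma \pi_1(x,u)}{J_\Sigma \pi_2(x,u)} \;\leq\; (1+\veps)\,2^{(n-m)/2}\binom{n}{n-m}^{1/2},$$
provided $\delta = \bdelta_{\theTheorem}(n,\Lambda,\veps)$ is chosen small enough; note that $J_\Sigma \pi_2 > 0$ a.e.\ by \ref{factor.pi.2}(2), so the division is legitimate. Multiplying through by $J_\Sigma \pi_2$ and integrating over $\Sigma_B$ converts these inequalities into
$$(1-\veps)\,2^{-(n-m)/2}\int_B \calY^0_E\bW\, d\calL^n \;\leq\; \int_B \calZ_E \bW\, d\calL^n \;\leq\; (1+\veps)\,2^{(n-m)/2}\binom{n}{n-m}^{1/2}\int_B \calY^0_E\bW\, d\calL^n,$$
for every $B \in \calB(E)$.

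Since both $\calZ_E \bW$ and $\calY^0_E\bW$ are $\calL^n$-measurable and the preceding chain of inequalities holds for every Borel $B \subset E$, a standard Lebesgue differentiation / Radon--Nikod\'ym argument (or simply taking $B = E \cap \{\calZ_E \bW > c\, \calY^0_E\bW\}$ and its counterpart) converts the integral bounds into the desired a.e.\ pointwise inequalities on $E$. The only step that requires genuine care is the explicit verification that the bounds on $J_\Sigma\pi_1$ and $J_\Sigma\pi_2$ in \ref{factor.pi.1} and \ref{factor.pi.2}(1) really degenerate to $2^{-(n-m)/2}$ and $\binom{n}{n-m}^{-1/2} 2^{-(n-m)/2}$ respectively as $|x-u| \to 0$, and that one can absorb the error terms into the multiplicative factor $1 \pm \veps$ by a sufficiently small choice of $\bdelta_{\theTheorem}(n,\Lambda,\veps)$; this is elementary once the bounds are written out, so no serious obstacle is expected.
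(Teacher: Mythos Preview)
Your proposal is correct and follows essentially the same approach as the paper: restrict to $B \in \calB(E)$ so that $|x-u| \leq \rmdiam E$ on $\Sigma_B$, use the Jacobian bounds from \ref{factor.pi.1} and \ref{factor.pi.2} to compare $\int_{\Sigma_B} J_\Sigma\pi_1\,d\calH^{n+m}$ with $\int_{\Sigma_B} J_\Sigma\pi_2\,d\calH^{n+m}$, and then pass from the integral inequalities over arbitrary $B \in \calB(E)$ to the almost-everywhere pointwise statement. The only cosmetic difference is that the paper inserts the intermediate quantity $\calH^{n+m}(\Sigma_B)$ explicitly (using $J_\Sigma\pi_1 \leq 1$ and $J_\Sigma\pi_2 \leq 1$ separately), whereas you phrase the comparison as a ratio bound on $J_\Sigma\pi_1/J_\Sigma\pi_2$; the two formulations are algebraically equivalent.
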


\begin{proof}
We readily infer from \ref{factor.pi.1} and \ref{factor.pi.2}(1) that there exists $\bdelta(n,\Lambda,\veps) > 0$ such that, for $\calH^{n+m}$-almost all $(x,u) \in \Sigma$, if $|x-u| \leq \bdelta(n,\Lambda,\veps)$, then 
\begin{equation}
\alpha := (1-\veps) 2^{-\frac{n-m}{2}} \leq J_\Sigma \pi_1(x,u)
\end{equation}
and
\begin{equation}
\beta := (1+\veps)^{-1} 2^{-\frac{n-m}{2}} \bin{n}{n-m}^{-\frac{1}{2}} \leq J_\Sigma \pi_2(x,u) ,
\end{equation}
where the above define $\alpha$ and $\beta$.
\par 
Assume now that $\rmdiam E \leq \bdelta(n,\Lambda,\veps)$. 
Given $B \in \calB(E)$, we infer from \eqref{eq.9}, \ref{factor.pi.1}, \ref{factor.pi.2}(1), \eqref{eq.10}, and the above lower bounds, that
\begin{multline*}
\phi_{E,\bW}(B)  = \int_{\Sigma_B} J_\Sigma \pi_1 d\calH^{n+m} 
 \geq \alpha \calH^{n+m}(\Sigma_B) 
 \geq \alpha \int_{\Sigma_B} J_\Sigma \pi_2 d\calH^{n+m} \\
 = \alpha \int_B \calY^0_E \bW d\calL^n
\end{multline*}
and
\begin{multline*}
\phi_{E,\bW}(B) = \int_{\Sigma_B} J_\Sigma \pi_1 d\calH^{n+m} \leq \calH^{n+m}(\Sigma_B) \leq \beta^{-1} \int_{\Sigma_B} J_\Sigma \pi_2 d\calH^{n+m} \\
 = \beta^{-1} \int_B  \calY^0_E \bW d\calL^n .
\end{multline*}
Thus,
\begin{equation*}
\int_B  \alpha\calY^0_E \bW d\calL^n \leq \int_B \calZ_E \bW d\calL^n \leq \int_B \beta^{-1} \calY^0_E \bW d\calL^n,
\end{equation*}
for every $B \in \calB(E)$. 
The conclusion follows from the $\calL^n$-measurability of both $\calZ_E \bW$ and $\calY^0_E \bW$.
\end{proof}

\begin{Empty}[Rest stop]
The above upper bound for $\calZ_E \bW$ is already enough to bound it from above, in turn, by a constant times $(\rmdiam E)^m$ -- see \ref{cor.ub}. 
We next want to establish that $\calZ_E \bW > 0$, almost everywhere in $E$.
Yet, in the definition \eqref{eq.14} of $\calY^0_E \bW(u)$, $u$ does not appear as the covariable of a function whose level sets we are measuring, thereby preventing the use of the coarea formula in an attempt to estimate $\calY^0_E \bW(u)$. 
This naturally leads to adding a variable $y \in \R^{n-m}$ to the fibered space $\Sigma$, a covariable for $g_{\bv_1,\ldots,\bv_{n-m},u}$.
\end{Empty}

\begin{Empty}[A fibered space associated with $E, B, \bw_1,\ldots,\bw_m,\bv_1,\ldots,\bv_{n-m}$]
\label{fibration.2}
Let $r > 0$.
Abbreviate $C_r = \R^{n-m} \cap \left\{ y : |y| \leq r \right\}$, the Euclidean ball centered at the origin, of radius $r$, in $\R^{n-m}$. 
We define
\begin{equation*}
\hat{F}_r : E \times \Rm \times C_r \to \Rn \times \Rn \times \R^{n-m} : (x,t,y) \mapsto \left(x , x + \sum_{i=1}^m t_i \bw_i(x) + \sum_{i=1}^{n-m} y_i \bv_i(x) , y \right)
\end{equation*}
and
\begin{equation*}
\hat{\Sigma}_r = \hat{F}_r \left( E \times \Rm \times C_r \right) = \Rn \times \Rn \times C_r \cap \left\{ (x,u,y) : x \in E \text{ and } u \in \bW(x) + \sum_{i=1}^{n-m} y_i\bv_i(x)\right\}.
\end{equation*}
We note that $\hat{F}_r$ is locally Lipschitzian and $\hat{\Sigma}_r$ is countably $2n$-rectifiable and $\calH^{2n}$-measurable. 
Similarly to \ref{fibration.1}, we define
\begin{equation*}
\hat{\Sigma}_{r,B} = \hat{\Sigma}_r \cap \pi_2^{-1}(B) 
\end{equation*}
which, clearly, is also countably $2n$-rectifiable and $\calH^{2n}$-measurable. 
We aim to apply the coarea formula to $\hat{\Sigma}_{r,B}$ and to the two projections
\begin{equation*}
\pi_1 \times \pi_3 : \Rn \times \Rn \times \R^{n-m} \to \Rn \times \R^{n-m} : (x,u,y) \mapsto (x,y)
\end{equation*}
and
\begin{equation*}
\pi_2 \times \pi_3 : \Rn \times \Rn \times \R^{n-m} \to \Rn \times \R^{n-m} : (x,u,y) \mapsto (u,y) .
\end{equation*}
To this end, we notice that
\begin{multline*}
\hat{\Sigma}_{r,B} \cap \left( \pi_1 \times \pi_3 \right)^{-1} \{ (x,y) \} \\ = \Rn \times \Rn \times \R^{n-m} \cap \left\{ (x,u,y) : u \in B \cap \left( \bW(x) + \sum_{i=1}^{n-m} y_i \bv_i(x) \right)\right\}
\end{multline*}
and, thus,
\begin{equation*}
\calH^m \left( \hat{\Sigma}_{r,B} \cap \left( \pi_1 \times \pi_3 \right)^{-1} \{ (x,y) \}\right) = \calH^m \left(B \cap \left( \bW(x) + \sum_{i=1}^{n-m} y_i \bv_i(x) \right) \right),
\end{equation*}
for every $(x,y) \in E \times C_r$. 
We further notice that
\begin{equation*}
\begin{split}
\hat{\Sigma}_{r,B} \cap ( \pi_2 & \times \pi_3 )^{-1} \{ (u,y) \} \\ &= \Rn \times \Rn \times \R^{n-m} \cap \left\{ (x,u,y) : x \in E \text{ and } u \in \bW(x) + \sum_{i=1}^{n-m} y_i\bv_i(x)\right\}\\
&= \Rn \times \Rn \times \R^{n-m} \cap \left\{ (x,u,y) : x \in E \cap g_{\bv_1,\ldots,\bv_{n-m},u}^{-1}\{y\} \right\} ,
\end{split}
\end{equation*}
because
\begin{equation*}
\begin{split}
u \in \bW(x) + \sum_{i=1}^{n-m} y_i \bv_i(x) & \iff u-x- \sum_{i=1}^{n-m} y_i \bv_i(x) \in \bW_0(x) \\
& \iff \left\la \bv_j(x) , u-x- \sum_{i=1}^{n-m} y_i \bv_i(x) \right\ra = 0 \text{, for all } j=1,\ldots,n-m \\
& \iff g_{\bv_1,\ldots,\bv_{n-m},u}(x) = y
\end{split}
\end{equation*}
and, therefore,
\begin{equation*}
\calH^m \left(\hat{\Sigma}_{r,B} \cap ( \pi_2 \times \pi_3 )^{-1} \{ (u,y) \} \right) = \calH^m \left(E \cap g_{\bv_1,\ldots,\bv_{n-m},u}^{-1}\{y\} \right),
\end{equation*}
whenever $u \in B$ and $y \in C_r$.
\par 
It now follows from the coarea formula and Fubini's theorem that
\begin{equation}
\label{eq.20}
\int_{\hat{\Sigma}_{r,B}} J_{\hat{\Sigma}_r} (\pi_1 \times \pi_3) d\calH^{2n} = \int_E  d\calL^n(x)\int_{C_r} \calH^m \left(B \cap \left( \bW(x) + \sum_{i=1}^{n-m} y_i \bv_i(x) \right) \right) d\calL^{n-m}(y) 
\end{equation}
and
\begin{equation}
\label{eq.21}
\int_{\hat{\Sigma}_{r,B}} J_{\hat{\Sigma}_r} (\pi_2 \times \pi_3) d\calH^{2n} = \int_B d\calL^n(u) \int_{C_r} \calH^m \left( E \cap g_{\bv_1,\ldots,\bv_{n-m},u}^{-1}\{y\} \right)d\calL^{n-m}(y) .
\end{equation}
\end{Empty}

\begin{Empty}[Coarea Jacobian factors of $\pi_1 \times \pi_3$ and $\pi_2 \times \pi_3$]
\label{factor}
{\it 
The following inequalities hold, for $\calH^{2n}$-almost every $(x,u,y) \in \hat{\Sigma}_r$.
\begin{equation*}
2^{-(n-m)} \left( 1 + 2n\Lambda |u-x| + 2n^2 \Lambda^2 |u-x|^2 \right)^{-\frac{n}{2}}
 \leq J_{\hat{\Sigma}_r} (\pi_1 \times \pi_3)(x,u,y)
\end{equation*}
and 
\begin{equation*}
 J_{\hat{\Sigma}_r} (\pi_2 \times \pi_3)(x,u,y) \leq 1 .
\end{equation*}
}
\end{Empty}

\begin{proof}
The second conclusion is obvious, since $\rmLip \pi_2 \times \pi_3 = 1$. 
Regarding the first conclusion, we reason similarly to the proof of \ref{factor.pi.1}. 
Fix $(x,u,y) = \hat{F}_r(x,t,y)$ such that $\hat{F}_r$ is differentiable at $(x,t,y)$ and denote by $L$ the restriction of $\pi_1 \times \pi_3$ to $T_{(x,u,y)}\hat{\Sigma}_r$. 
This tangent space is generated by the following $2n$ vectors of $\Rn \times \Rn \times \R^{n-m}$:
\begin{equation*}
\begin{split}
\frac{\partial \hat{F}_r}{\partial x_j}(x,t,y) & = \left( e_j , e_j + \sum_{i=1}^m t_i \frac{\partial \bw_i(x)}{\partial x_j}(x) + \sum_{i=1}^{n-m} y_i \frac{\partial \bv_i}{\partial x_j}(x) , 0 \right) ,\, j=1,\ldots,n, \\
\frac{\partial \hat{F}_r}{\partial t_k}(x,t,y) & = \left( 0, \bw_k(x), 0 \right) ,\, k=1,\ldots,m, \\
\frac{\partial \hat{F}_r}{\partial y_\ell}(x,t,y) & = \left( 0, \bv_\ell(x) , e_\ell \right) ,\, \ell=1,\ldots,n-m .
\end{split}
\end{equation*}
The range of $\pi_1 \times \pi_3$ being $(2n -m)$-dimensional, we need to select $2n-m$ vectors $v_1,\ldots,v_{2n-m}$ in $ T_{(x,u,y)}\hat{\Sigma}_r$ in view of obtaining a lower bound
\begin{equation}
\label{eq.22}
J_{\hat{\Sigma}_r} (\pi_1 \times \pi_3)(x,u,y) = \| \wedge_{2n-m} L \| \geq \frac{|L(v_1) \wedge \ldots \wedge L(v_{2n-m})|}{|v_1 \wedge \ldots \wedge v_{2n-m}|} .
\end{equation}
Our choice of $v_1,\ldots,v_{2n-m}$ is as follows.
As in the proof of \ref{factor.pi.1}, we let $w_j=\bw_j(x)$, for $j=1,\ldots,m$, and $w_j=\bv_{j-m}(x)$, for $j=m+1,\ldots,n$.
For $j=1,\ldots,m$, we define
\begin{multline*}
v_j = \left(\sum_{p=1}^n \la w_j , e_p \ra \frac{\partial \hat{F}_r}{\partial x_p}(x,t,y)\right) - \frac{\partial \hat{F}_r}{\partial t_j}(x,t,y) \\
= \left( w_j , \sum_{i=1}^m t_i D\bw_i(x)(w_j) + \sum_{i=1}^{n-m} y_i D\bv_i(x)(w_j) , 0 \right) ,
\end{multline*}
for $j=m+1,\ldots,n$, we define
\begin{equation*}
v_j = \sum_{p=1}^m \la w_j , e_p \ra \frac{\partial \hat{F}_r}{\partial x_p}(x,t,y)
= \left( w_j , w_j + \sum_{i=1}^m t_i D\bw_i(x)(w_j) + \sum_{i=1}^{n-m} y_i D\bv_i(x)(w_j) , 0 \right) ,
\end{equation*}
and, for $j=n+1,\ldots,2n-m$, we define
\begin{equation*}
v_j = \frac{\partial \hat{F}_r}{\partial y_{j-n}}(x,t,y) = ( 0 , \bv_{j-n}(x),e_{j-n}) ,
\end{equation*}
so that $L(v_1),\ldots,L(v_{2n-m})$ is an orthonormal basis of $\Rn \times \R^{n-m}$ and, therefore, the numerator in \eqref{eq.22} equals 1. 
In order to determine an upper bound for its denominator, we start by fixing $j=1,\ldots,n$, we abbreviate $a_j(x,t,y) = \sum_{i=1}^m t_i D\bw_i(x)(x)(w_j)$ and $b_j(x,t,y) = \sum_{i=1}^{n-m} y_i D\bv_i(x)(w_j)$, and we notice that $|a_j(x,t,y)| \leq m \Lambda |t| \leq n \Lambda |t|$, $|b_j(x,t,y)| \leq (n-m) \Lambda |y| \leq n \Lambda |y|$. 
Furthermore, since $u - x = \sum_{i=1}^m t_i \bw_i(x) + \sum_{i=1}^{n-m} y_i \bv_i(x)$, one has $|u-x|^2 = |t|^2 + |y|^2 \geq \max \{|t|^2,|y|^2\}$. 
Therefore, if $j=1,\ldots,m$, then
\begin{equation*}
\begin{split}
|v_j|^2 & = |w_j|^2 + \left| a_j(x,t,y) + b_j(x,t,y) \right|^2 \\
& \leq 1 + \left|a_j(x,t,y)\right|^2 + \left|b_j(x,t,y)\right|^2 + 2 \left|a_j(x,t,y)\right| \left|b_j(x,t,y)\right| \\ 
& \leq 1 + n^2 \Lambda^2 \left( |t|^2 + |y|^2 + 2 |t| |y|\right) \\
& \leq 1 + 2 n^2 \Lambda^2 |u-x|^2,
\end{split}
\end{equation*}
whereas, if $j=m+1,\ldots,n$, then
\begin{equation*}
\begin{split}
\left| v_j\right|^2 &= |w_j|^2 + |w_j + a_j(x,t,y) + b_j(x,t,y)|^2 \\
& \leq 1 + 1 + |a_j(x,t,y)|^2 + |b_j(x,t,y)|^2 \\
& \quad \quad \quad+ 2|a_j(x,t,y)| + 2 |b_j(x,t,y)| + 2 |a_j(x,t,y)||b_j(x,t,y)| \\
& \leq 2 \left( 1 + n^2 \Lambda^2 |u-x|^2 + 2n\Lambda |u-x| \right) ,
\end{split}
\end{equation*}
and, if $j=n+1,\ldots,2n-m$, then
\begin{equation*}
\left| v_j \right| = \sqrt{2} .
\end{equation*}
We conclude that
\begin{multline*}
|v_1\wedge \ldots \wedge v_{2n-m} | 
\leq 2^{n-m}\left( 1 + 2n^2 \Lambda^2|u-x|^2\right)^\frac{m}{2}\left( 1 + n^2 \Lambda^2 |u-x|^2+ 2n\Lambda |u-x|  \right)^\frac{n-m}{2} \\
\leq  2^{n-m}\left( 1 + 2n^2 \Lambda^2|u-x|^2+2n\Lambda |u-x|  \right)^\frac{n}{2}
\end{multline*}
and the proof is complete.
\end{proof}

\begin{Empty}[Definition of $\calY_E \bW$]
\label{def.Y}
It follows from the coarea theorem that the function
\begin{equation*}
\Rn \times \R^{n-m} \to [0,\infty] : (u,y) \to \calH^m \left( E \cap g_{\bv_1,\ldots,\bv_{n-m},u}^{-1} \{y\}\right) 
\end{equation*}
is $\calL^n \otimes \calL^{n-m}$-measurable (recall \ref{fibration.2} applied with $B = \Rn$). 
It now follows from Fubini's theorem that, for each $r > 0$, the function
\begin{equation*}
\Rn \to [0,\infty] : u \mapsto \dashint_{C_r} \calH^m \left( E \cap g_{\bv_1,\ldots,\bv_{n-m},u}^{-1} \{y\}\right) d\calL^{n-m}(y)
\end{equation*}
is $\calL^n$-measurable. 
In turn, the function
\begin{equation*}
\calY_E \bW : \Rn \to [0,\infty] : u \mapsto \liminf_j \dashint_{C_{j^{-1}}} \calH^m \left( E \cap g_{\bv_1,\ldots,\bv_{n-m},u}^{-1} \{y\}\right) d\calL^{n-m}(y)
\end{equation*}
is $\calL^n$-measurable. 
It is a replacement for $\calY^0_E \bW$ defined in \ref{def.Y0}. 
We shall establish, for $\calZ_E \bW$, a similar lower bound to that in \ref{Z.1}, this time involving $\calY_E \bW$. 
Before doing so, we notice the rather trivial fact that if $F \subset E$, then
\begin{equation*}
\calY_F \bW(u) \leq \calY_E \bW(u),
\end{equation*}
for all $u \in \Rn$.
\end{Empty}

\begin{Empty}[Preparatory remark for the proof of \ref{lb.1}]
\label{rem.1}
It follows from the coarea theorem that the function
\begin{equation*}
\Rn \times \R^{n-m} \to [0,\infty] : (x,y) \mapsto \calH^m \left(B \cap \left( \bW(x) + \sum_{i=1}^{n-m} y_i \bv_i(x) \right) \right) 
\end{equation*}
is $\calL^n \otimes \calL^{n-m}$-measurable (recall \ref{fibration.2} applied with $B = \Rn$). 
It therefore follows from Fubini's theorem as in \ref{def.Y} that
\begin{equation*}
f_j : \Rn \to [0,\infty] : x \mapsto \dashint_{C_{j^{-1}}} \calH^m \left(B \cap \left( \bW(x) + \sum_{i=1}^{n-m} y_i \bv_i(x) \right) \right) d\calL^{n-m}(y)
\end{equation*}
is $\calL^n$-measurable. 
Furthermore, if $B$ is bounded, then $|f_j(x)| \leq \balpha(m) (\rmdiam B)^m$, for every $x \in \Rn$. 
\end{Empty}

\begin{Empty}
\label{usc}
{\it 
If $B$ is compact, then, for every $x \in E,$ the function
\begin{equation*}
\R^{n-m} \to \R_+ : y \mapsto \calH^m \left(B \cap \left( \bW(x) + \sum_{i=1}^{n-m} y_i \bv_i(x) \right) \right) 
\end{equation*}
is upper semicontinuous.
}
\end{Empty}

\begin{proof}
The proof is analogous to that of \ref{measurability}.
For each $y \in \R^{n-m}$, define the compact set $K_y = B \cap \left( \bW(x) + \sum_{i=1}^{n-m} y_i \bv_i(x) \right)$. 
If $(y_k)_k$ is a sequence converging to $y$, we ought to show that
\begin{equation*}
\calH^m_\infty \left(K_y\right) \geq \limsup_k \calH^m_\infty \left( K_{y_k} \right).
\end{equation*}
Since each $K_y$ is a subset of an $m$-dimensional affine subspace of $\Rn$, this is indeed equivalent to the same inequality with $\calH^m_\infty$ replaced by $\calH^m$, according to \ref{h.m}(3). 
Considering, if necessary, a subsequence of $(y_k)_k$, we may assume that none of the compact sets $K_{y_k}$ is empty and that the above limit superior is a limit. 
Considering yet a further subsequence, we may now assume that $(K_{y_k})_k$ converges in Hausdorff distance to some compact set $L \subset B$. 
One checks that $L \subset K_y$. 
It then follows from \ref{h.m}(1) that $\calH^m_\infty \left(K_y\right) \geq \calH^m_\infty (L) \geq \limsup_k \calH^m_\infty \left( K_{y_k} \right)$.
\end{proof}

\begin{Proposition}
\label{lb.1}
Given $0 < \veps < 1$, there exists $\bdelta_{\theTheorem}(n,\Lambda,\veps) > 0$ with the following property.
If $\rmdiam ( E \cup B) \leq \bdelta_{\theTheorem}(n,\Lambda,\veps)$ and $B$ is compact, then
\begin{equation*}
\int_E \calH^m \left( B \cap \bW(x) \right) d\calL^n(x) \geq (1-\veps)2^{-(n-m)} \int_B \calY_E \bW(u) d\calL^n(u) .
\end{equation*}
\end{Proposition}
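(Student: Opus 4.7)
The plan is to combine the two coarea identities \eqref{eq.20} and \eqref{eq.21} for the rectifiable set $\hat{\Sigma}_{r,B}$, and then let the fiber radius $r \to 0^+$. Invoking \ref{factor}, the lower bound stated there for $J_{\hat{\Sigma}_r}(\pi_1 \times \pi_3)$ depends only on $|u-x|$, $n$ and $\Lambda$, and it tends to $2^{-(n-m)}$ as $|u-x| \to 0^+$; hence a routine elementary calculation produces $\bdelta_{\theTheorem}(n,\Lambda,\veps) > 0$ such that $|u-x| \leq \bdelta_{\theTheorem}(n,\Lambda,\veps)$ entails $J_{\hat{\Sigma}_r}(\pi_1 \times \pi_3)(x,u,y) \geq (1-\veps) 2^{-(n-m)}$. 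Under the hypothesis $\rmdiam(E \cup B) \leq \bdelta_{\theTheorem}(n,\Lambda,\veps)$, every $(x,u,y) \in \hat{\Sigma}_{r,B}$ automatically meets this condition since $x \in E$ and $u \in B$. Combined with the trivial upper bound $J_{\hat{\Sigma}_r}(\pi_2 \times \pi_3) \leq 1$ from the same statement, substitution into \eqref{eq.20}--\eqref{eq.21} (after division by $\balpha(n-m) r^{n-m}$) yields, for every $r > 0$, the estimate $\Phi_r \geq (1-\veps) 2^{-(n-m)} \Psi_r$, where
\begin{equation*}
\Phi_r = \int_E \dashint_{C_r} \calH^m\!\Big(B \cap \Big(\bW(x) + \sum_{i=1}^{n-m} y_i \bv_i(x)\Big)\Big) d\calL^{n-m}(y)\, d\calL^n(x)
\end{equation*}
and
\begin{equation*}
\Psi_r = \int_B \dashint_{C_r} \calH^m\!\left(E \cap g_{\bv_1,\ldots,\bv_{n-m},u}^{-1}\{y\}\right) d\calL^{n-m}(y)\, d\calL^n(u).
\end{equation*}

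I would then specialize $r = j^{-1}$ and let $j \to \infty$. Since the integrand defining $\Psi$ is nonnegative, ordinary Fatou together with the definition of $\calY_E \bW$ in \ref{def.Y} gives $\liminf_j \Psi_{j^{-1}} \geq \int_B \calY_E \bW(u) \, d\calL^n(u)$. For $\Phi$, the upper semicontinuity of $y \mapsto \calH^m(B \cap (\bW(x) + \sum y_i \bv_i(x)))$ at $y=0$ supplied by \ref{usc} yields the pointwise bound
\begin{equation*}
\limsup_j \dashint_{C_{j^{-1}}} \calH^m\!\Big(B \cap \Big(\bW(x) + \sum_{i=1}^{n-m} y_i \bv_i(x) \Big)\Big) d\calL^{n-m}(y) \leq \calH^m(B \cap \bW(x))
\end{equation*}
for each $x \in E$. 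Compactness of $B$ provides the uniform majorant $\balpha(m)(\rmdiam B)^m$ for the inner average, so reverse Fatou over the finite-measure set $E$ gives $\limsup_j \Phi_{j^{-1}} \leq \phi_{E,\bW}(B)$.

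Chaining all this together,
\begin{equation*}
\phi_{E,\bW}(B) \geq \limsup_j \Phi_{j^{-1}} \geq (1-\veps) 2^{-(n-m)} \limsup_j \Psi_{j^{-1}} \geq (1-\veps) 2^{-(n-m)} \int_B \calY_E \bW \, d\calL^n,
\end{equation*}
which is the conclusion. The principal subtlety lies in arranging the limit interchanges in compatible directions --- one wants an upper bound on the left and a lower bound on the right --- and it is precisely the compactness of $B$, furnishing simultaneously the uniform majorant for reverse Fatou and the upper semicontinuity \ref{usc}, that makes the upper-bound half work; the nonnegativity of the right-hand integrand suffices for the lower-bound half via standard Fatou.
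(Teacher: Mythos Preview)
Your proposal is correct and follows essentially the same approach as the paper's proof: you use the Jacobian bounds from \ref{factor} to compare the two coarea identities \eqref{eq.20} and \eqref{eq.21}, then pass to the limit $r = j^{-1} \to 0$ via reverse Fatou on the left (justified by \ref{usc} and the uniform bound $\balpha(m)(\rmdiam B)^m$ from compactness of $B$) and ordinary Fatou on the right. The paper presents the same argument, merely writing the chain of inequalities in one long display rather than isolating the intermediate inequality $\Phi_r \geq (1-\veps)2^{-(n-m)}\Psi_r$ as you do.
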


\begin{proof}
We first observe that we can choose $\bdelta_{\ref{lb.1}}(n,\Lambda,\veps) > 0$ small enough so that 
\begin{equation}
\label{eq.23}
J_{\hat{\Sigma}_r} (\pi_1 \times \pi_3)(x,u,y) \geq (1-\veps) 2^{-(n-m)},
\end{equation}
for $\calH^{2n}$-almost every $(x,u,y) \in \hat{\Sigma}_r$, provided $|u-x| \leq \bdelta_{\ref{lb.1}}(n,\Lambda,\veps)$, according to \ref{factor}. 
Thus, \eqref{eq.23} holds, for $\calH^{2n}$-almost every $(x,u,y) \in \hat{\Sigma}_{r,B}$, under the assumption that  $\rmdiam ( E \cup B) \leq \bdelta_{\ref{lb.1}}(n,\Lambda,\veps)$. 
In that case, \eqref{eq.20}, \eqref{eq.21}, and \ref{factor} imply that
\begin{equation}
\label{eq.25}
\begin{split}
\int_E  d\calL^n(x) & \int_{C_r} \calH^m \left(B \cap \left( \bW(x) + \sum_{i=1}^{n-m} y_i \bv_i(x) \right) \right) d\calL^{n-m}(y) \\
&= \int_{\hat{\Sigma}_{r,B}} J_{\hat{\Sigma}_r} (\pi_1 \times \pi_3) d\calH^{2n} \\
& \geq (1-\veps)2^{-(n-m)} \calH^{2n} \left( \hat{\Sigma}_{r,B}\right) \\
& \geq (1-\veps) 2^{-(n-m)}\int_{\hat{\Sigma}_{r,B}} J_{\hat{\Sigma}_r} (\pi_2 \times \pi_3) d\calH^{2n} \\
& = (1-\veps)  2^{-(n-m)} \int_B d\calL^n(u) \int_{C_r} \calH^m \left( E \cap g_{\bv_1,\ldots,\bv_{n-m},u}^{-1}\{y\} \right)d\calL^{n-m}(y) .
\end{split}
\end{equation}
\par 
Fix $x \in E$ and $\beta > 0$. 
According to \ref{usc}, there exists a positive integer $j(x,\beta)$ such that if $j \geq j(x,\beta)$, then
\begin{equation*}
\begin{split}
\calH^m \left( B \cap \bW(x) \right) + \beta & \geq \sup_{y \in C_{j^{-1}}}
\calH^m \left(B \cap \left( \bW(x) + \sum_{i=1}^{n-m} y_i \bv_i(x) \right) \right) \\
& \geq \dashint_{C_{j^{-1}}}\calH^m \left(B \cap \left( \bW(x) + \sum_{i=1}^{n-m} y_i \bv_i(x) \right) \right) d\calL^{n-m}(y) .
\end{split}
\end{equation*} 
Taking the limit superior as $j \to \infty$, on the right hand side, and letting $\beta \to 0$, we obtain
\begin{equation}
\label{eq.24}
\calH^m \left( B \cap \bW(x) \right) \geq \limsup_j \dashint_{C_{j^{-1}}}\calH^m \left(B \cap \left( \bW(x) + \sum_{i=1}^{n-m} y_i \bv_i(x) \right) \right) d\calL^{n-m}(y) .
\end{equation}
As this holds for all $x \in E$, we may integrate over $E$ with respect to $\calL^n$. 
We notice that for every $j=1,2,\ldots$, $|f_j| \leq \balpha(m) (\rmdiam B)^m \ind_{\Rn}$ (recall the notation of \ref{rem.1}); the latter being $\calL^n \hel E$-summable, this justifies the application of the reverse Fatou lemma below. 
Thus, the following ensues from \eqref{eq.24}, the reverse Fatou lemma, \eqref{eq.25}, and the Fatou lemma:
\begin{equation*}
\begin{split}
\int_E \calH^m &\left( B \cap \bW(x) \right) d\calL^n(x) \\& \geq \int_E d\calL^n(x) \limsup_j \dashint_{C_{j^{-1}}}\calH^m \left(B \cap \left( \bW(x) + \sum_{i=1}^{n-m} y_i \bv_i(x) \right) \right) d\calL^{n-m}(y) \\
& \geq \limsup_j \int_E d\calL^n(x)\dashint_{C_{j^{-1}}}\calH^m \left(B \cap \left( \bW(x) + \sum_{i=1}^{n-m} y_i \bv_i(x) \right) \right) d\calL^{n-m}(y) \\
& \geq (1-\veps) 2^{-(n-m)} \limsup_j \int_B d\calL^n(u) \dashint_{C_{j^{-1}}} \calH^m \left( E \cap g_{\bv_1,\ldots,\bv_{n-m},u}^{-1}\{y\} \right)d\calL^{n-m}(y)\\
& \geq (1-\veps)  2^{-(n-m)} \liminf_j \int_B d\calL^n(u) \dashint_{C_{j^{-1}}} \calH^m \left( E \cap g_{\bv_1,\ldots,\bv_{n-m},u}^{-1}\{y\} \right)d\calL^{n-m}(y)\\
& \geq (1-\veps)  2^{-(n-m)}  \int_B d\calL^n(u)\liminf_j \dashint_{C_{j^{-1}}} \calH^m \left( E \cap g_{\bv_1,\ldots,\bv_{n-m},u}^{-1}\{y\} \right)d\calL^{n-m}(y)\\
&= (1-\veps)  2^{-(n-m)}\int_B \calY_E \bW(u)d\calL^n(u) .
\end{split}
\end{equation*}
\end{proof}

\begin{Corollary}
\label{lb.2}
If $0 < \veps < 1$ and $\rmdiam E \leq \bdelta_{\ref{lb.1}}(n,\Lambda,\veps)$, then 
\begin{equation*}
\calZ_E \bW(u) \geq (1-\veps)  2^{-(n-m)}\calY_E \bW(u),
\end{equation*}
for $\calL^n$-almost every $u \in E$.
\end{Corollary}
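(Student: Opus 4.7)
The plan is to derive the integral inequality against compact subsets of $E$ by combining Proposition \ref{lb.1} with the defining property of $\calZ_E \bW$, and then to upgrade this to a pointwise $\calL^n$-a.e.\ inequality via a truncation and inner-regularity argument.

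First I would fix an arbitrary compact $B \subset E$. Since $B \subset E$, one has $\rmdiam(E \cup B) = \rmdiam E \leq \bdelta_{\ref{lb.1}}(n,\Lambda,\veps)$, so Proposition \ref{lb.1} applies and yields
\[
\int_E \calH^m\big(B \cap \bW(x)\big)\, d\calL^n(x) \geq (1-\veps)\, 2^{-(n-m)} \int_B \calY_E \bW\, d\calL^n.
\]
By the definitions in \ref{def.phi} and \ref{def.Z}, the left-hand side equals $\phi_{E,\bW}(B) = \int_B \calZ_E \bW\, d\calL^n$. Hence
\[
\int_B \calZ_E \bW\, d\calL^n \;\geq\; (1-\veps)\, 2^{-(n-m)} \int_B \calY_E \bW\, d\calL^n
\]
for every compact $B \subset E$.

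Set $c = (1-\veps)\, 2^{-(n-m)}$ and argue by contradiction. Since $\phi_{E,\bW}$ is a positive measure, we may take the Radon--Nikod\'ym derivative $\calZ_E \bW$ to be nonnegative. Suppose $N := E \cap \{\calZ_E \bW < c\, \calY_E \bW\}$ has positive $\calL^n$-measure. Both functions are nonnegative and $\calL^n$-measurable, so $N$ is $\calL^n$-measurable. For some integer $k$ the truncated set $N_k := N \cap \{\calY_E \bW \leq k\}$ has positive measure; on $N_k$, the function $\calZ_E \bW$ is bounded by $ck$ and $\calY_E \bW$ is bounded by $k$. By inner regularity of $\calL^n$ there is a compact $B \subset N_k$ with $\calL^n(B) > 0$. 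The strict pointwise inequality $\calZ_E \bW < c\, \calY_E \bW$ on $B$, combined with boundedness and positivity of $\calL^n(B)$, gives
\[
\int_B \calZ_E \bW\, d\calL^n < c \int_B \calY_E \bW\, d\calL^n,
\]
contradicting the estimate derived above. Hence $\calZ_E \bW \geq c\, \calY_E \bW$ holds $\calL^n$-a.e.\ on $E$.

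I do not anticipate any serious obstacle here: the analytic heart of the matter is already packaged in Proposition \ref{lb.1}, and the present step is a clean measure-theoretic postlude. The only mild subtlety is the truncation, which is needed because $\calY_E \bW$ is a priori $[0,\infty]$-valued, so one cannot pass directly from an integral inequality valid on all compact subsets to a pointwise comparison without first restricting to a subset where both sides are finite.
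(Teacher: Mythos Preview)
Your proof is correct and follows precisely the approach the paper intends: the corollary is stated without proof because it is the pointwise consequence of Proposition~\ref{lb.1}, obtained exactly as at the end of the proof of Proposition~\ref{Z.1} (integral inequality over test sets $\Rightarrow$ pointwise a.e.\ inequality), with the minor adaptation that \ref{lb.1} is stated only for compact $B$, which you handle via inner regularity. Your truncation step is a safe way to pass to the pointwise inequality; one could also observe directly from \ref{lb.1} that $\int_E \calY_E\bW\,d\calL^n \leq (1-\veps)^{-1}2^{n-m}\phi_{E,\bW}(E) < \infty$, so both sides are $\calL^n$-integrable on $E$ and the standard comparison applies without truncation.
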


\section{Upper bound for $\calY_E \bW$ and $\calZ_E \bW$}

\begin{Empty}[Bow tie lemma]
\label{bow.tie}
{\it 
Let $S \subset \Rn$, $W \in \bG(n,m)$, and $0 < \tau < 1$. Assume that 
\begin{equation*}
(\forall \,x \in S)(\forall \,0 < \rho \leq \rmdiam S) : S \cap \bB(x,\rho) \subset \bB(x+W,\tau\rho) .
\end{equation*}
Then there exists $F : P_W(S) \to \Rn$ such that $S=\rmim F$ and $\rmLip F \leq \frac{1}{\sqrt{1-\tau^2}}$. In particular,
\begin{equation*}
\calH^m(S) \leq \left(\frac{1}{\sqrt{1-\tau^2}} \right)^m \balpha(m) (\rmdiam S)^m .
\end{equation*}
}
\end{Empty}

\begin{proof}
Let $x,x' \in S$ and define $\rho = |x-x'| \leq \rmdiam S$. 
Thus, $x' \in S \cap \bB(x,\rho)$ and, therefore, $\left|P_{W^\perp}(x-x') \right| \leq \tau \rho = \tau |x-x'|$. 
Since $|x-x'|^2 = \left|P_{W}(x-x') \right|^2 + \left|P_{W^\perp}(x-x') \right|^2$, we infer that
\begin{equation*}
(1-\tau^2) \left|x-x'\right|^2 \leq \left|P_{W}(x-x') \right|^2.
\end{equation*}
Therefore, $P_W|_S$ is injective; and the Lipschitzian bound on $F = \left( P_W|_S \right)^{-1}$ clearly follows from the above inequality. 
Regarding the second conclusion, we note that
\begin{equation*}
\calH^m(S) = \calH^m\left(F(P_W(S)) \right) \leq \left( \rmLip F\right)^m \calH^m \left(P_W(S) \right)
\end{equation*}
and $P_W(S)$ is contained in a ball of radius $\rmdiam P_W(S) \leq \rmdiam S$.
\end{proof}

\begin{Empty}
\label{ub.1}
{\it 
Given $0 < \tau < 1$, there exists $\bdelta_{\theTheorem}(n,\Lambda,\tau) > 0$ with the following property. If 
\begin{enumerate}
\item $x_0 \in U$ and $u \in \Rn$;
\item $\rmdiam\left( E \cup \{x_0\}\cup\{u\}\right) \leq \bdelta_{\theTheorem}(n,\Lambda,\tau)$;
\end{enumerate}
then: For each $y \in \R^{n-m}$, each $x \in E \cap g_{\bv_1,\ldots,\bv_{n-m},u}^{-1}\{y\}$, and each $0 < \rho < \infty$, one has
\begin{equation*}
E \cap g_{\bv_1,\ldots,\bv_{n-m},u}^{-1}\{y\} \cap \bB(x,\rho) \subset \bB \left( x + \bW_0(x_0),\tau \rho\right).
\end{equation*}
}
\end{Empty}

\begin{proof}
We show that $\bdelta_{\ref{ub.1}}(n,\Lambda,\tau)=\frac{\tau}{2\Lambda \sqrt{n}}$ will do.
Let $x,x' \in E \cap g_{\bv_1,\ldots,\bv_{n-m},u}^{-1}\{y\}$, for some $y \in \R^{n-m}$. 
Thus, $g_{\bv_1,\ldots,\bv_{n-m},u}(x)=g_{\bv_1,\ldots,\bv_{n-m},u}(x')$ and, hence,
\begin{multline*}
0 = \left| g_{\bv_1,\ldots,\bv_{n-m},u}(x) - g_{\bv_1,\ldots,\bv_{n-m},u}(x')\right| = \sqrt{\sum_{i=1}^{n-m} \left| \la \bv_i(x),x-u\ra -  \la \bv_i(x'),x'-u\ra\right|^2} \\
= \sqrt{\sum_{i=1}^{n-m} \left| \la \bv_i(x),x-x'\ra - \la \bv_i(x') - \bv_i(x),x'-u \ra\right|^2}\\
\geq \sqrt{\sum_{i=1}^{n-m} \left| \la \bv_i(x),x-x'\ra\right|^2} - \sqrt{\sum_{i=1}^{n-m}\left| \la \bv_i(x') - \bv_i(x),x'-u \ra\right|^2},
\end{multline*}
thus,
\begin{multline*}
\sqrt{\sum_{i=1}^{n-m} \left| \la \bv_i(x),x-x'\ra\right|^2} \leq \sqrt{\sum_{i=1}^{n-m}\left| \la \bv_i(x') - \bv_i(x),x'-u \ra\right|^2} \\
\leq \sqrt{n-m} \Lambda |x-x'||x'-u| \leq \frac{\tau}{2}|x-x'|.
\end{multline*}
In turn,
\begin{multline*}
\left| P_{\bW_0(x_0)^\perp}(x-x')\right| = \sqrt{\sum_{i=1}^{n-m}\left|\la \bv_i(x_0),x-x' \ra \right|^2} \\ \leq \sqrt{\sum_{i=1}^{n-m}\left|\la \bv_i(x'),x-x' \ra \right|^2}+\sqrt{\sum_{i=1}^{n-m}\left|\la \bv_i(x') - \bv_i(x_0),x-x' \ra \right|^2}\\
\leq \frac{\tau}{2}|x-x'| + \sqrt{n-m}\Lambda |x'-x_0||x-x'| \leq \tau |x-x'|.
\end{multline*}
\end{proof}

\begin{Proposition}
\label{upper.bound}
There are $\bdelta_{\theTheorem}(n,\Lambda) > 0$ and $\bc_{\theTheorem}(m) \geq 1$ with the following property. If $u \in \Rn$ and $\rmdiam (E \cup \{u\}) \leq \bdelta_{\theTheorem}(n,\Lambda)$, then
\begin{equation*}
\max \left\{ \calY^0_E \bW(u) , \calY_E \bW(u)\right\} \leq \bc_{\theTheorem}(m) (\rmdiam E)^m .
\end{equation*}
\end{Proposition}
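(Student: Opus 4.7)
The plan is to obtain both bounds simultaneously by estimating $\calH^m(E \cap g_{\bv_1,\ldots,\bv_{n-m},u}^{-1}\{y\})$ uniformly in $y \in \R^{n-m}$, using \ref{ub.1} to verify the hypothesis of the bow tie lemma \ref{bow.tie}. The idea is that for the diameter small enough, each level set $S_y := E \cap g_{\bv_1,\ldots,\bv_{n-m},u}^{-1}\{y\}$ lies inside a thin neighborhood of an affine $m$-plane and therefore has $\calH^m$-measure comparable to $(\rmdiam E)^m$, regardless of $y$.

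More precisely, I would fix $\tau = 1/2$ once and for all and set $\bdelta_{\ref{upper.bound}}(n,\Lambda) := \bdelta_{\ref{ub.1}}(n,\Lambda,1/2)$, $\bc_{\ref{upper.bound}}(m) := (2/\sqrt{3})^m \balpha(m)$. Now suppose $\rmdiam(E \cup \{u\}) \leq \bdelta_{\ref{upper.bound}}(n,\Lambda)$ and fix an arbitrary $y \in \R^{n-m}$. If $S_y$ is empty, then $\calH^m(S_y)=0$. Otherwise, pick any $x_0 \in S_y \subset E \subset U$. Since $\rmdiam(E \cup \{x_0\} \cup \{u\}) = \rmdiam(E \cup \{u\}) \leq \bdelta_{\ref{ub.1}}(n,\Lambda,1/2)$, \ref{ub.1} applied to $W = \bW_0(x_0)$ yields
\begin{equation*}
S_y \cap \bB(x,\rho) \subset \bB(x + \bW_0(x_0), (1/2)\rho)
\end{equation*}
for every $x \in S_y$ and every $0 < \rho < \infty$, in particular for $0 < \rho \leq \rmdiam S_y$. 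The bow tie lemma \ref{bow.tie} then gives
\begin{equation*}
\calH^m(S_y) \leq (2/\sqrt{3})^m \balpha(m) (\rmdiam S_y)^m \leq \bc_{\ref{upper.bound}}(m) (\rmdiam E)^m,
\end{equation*}
since $S_y \subset E$.

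Specializing to $y = 0$ immediately yields $\calY^0_E \bW(u) = \calH^m(S_0) \leq \bc_{\ref{upper.bound}}(m)(\rmdiam E)^m$. For $\calY_E \bW(u)$, I would simply integrate the uniform bound over $C_{j^{-1}}$, divide by $\calL^{n-m}(C_{j^{-1}})$, and pass to $\liminf_j$; each averaged integral is bounded by the same constant, so the $\liminf$ is as well. I do not anticipate any real obstacle here: the content is entirely the observation that \ref{ub.1} provides exactly the hypothesis of \ref{bow.tie} with $W$ chosen as $\bW_0(x_0)$ for some $x_0$ in the level set, and that the resulting estimate is independent of the choice of $y$. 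The only mild subtlety is the possibility that $S_y$ is empty, which is handled trivially.
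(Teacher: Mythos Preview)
Your proof is correct and follows essentially the same route as the paper: set $\bdelta_{\ref{upper.bound}}(n,\Lambda)=\bdelta_{\ref{ub.1}}(n,\Lambda,1/2)$, invoke \ref{ub.1} with $\tau=1/2$ to feed the bow tie lemma \ref{bow.tie} with $W=\bW_0(x_0)$, and obtain the uniform bound $\calH^m(S_y)\leq(2/\sqrt{3})^m\balpha(m)(\rmdiam E)^m$ for all $y$. The only cosmetic difference is that the paper picks $x_0\in E$ once (rather than $x_0\in S_y$), which works equally well since \ref{ub.1} only requires $x_0\in U$ and the diameter condition.
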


\begin{proof}
Let $\bdelta_{\ref{upper.bound}}(n,\Lambda) =\bdelta_{\ref{ub.1}}(n,\Lambda,1/2)$.
Recall the definitions of $\calY^0_E \bW$ and $\calY_E \bW$, \ref{def.Y0} and \ref{def.Y}, respectively. 
If $E =\emptyset$, the conclusion is obvious. 
If not, pick $x_0 \in E$ arbitrarily. 
Given any $y \in \R^{n-m}$, we see that \ref{ub.1} applies with $\tau = 1/2$ and, in turn, the bow tie lemma \ref{bow.tie} applies to $S = E \cap g_{\bv_1,\ldots,\bv_{n-m},u}^{-1}\{y\}$ and $W = \bW_0(x_0)$. 
Thus,
\begin{equation*}
\calH^m \left(E \cap g_{\bv_1,\ldots,\bv_{n-m},u}^{-1}\{y\} \right) \leq \left( \frac{2}{\sqrt{3}}\right)^m \balpha(m) (\rmdiam E)^m .
\end{equation*}
The proposition is proved.
\end{proof}

\begin{Corollary}
\label{cor.ub}
There are $\bdelta_{\theTheorem}(n,\Lambda) > 0$ and $\bc_{\theTheorem}(n) \geq 1$ with the following property. If $\rmdiam E \leq \bdelta_{\theTheorem}(n,\Lambda)$, then
\begin{equation*}
\calZ_E \bW(u) \leq \bc_{\theTheorem}(n) (\rmdiam E)^m,
\end{equation*}
for $\calL^n$-almost every $u \in E$.
\end{Corollary}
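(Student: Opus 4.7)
The plan is to combine Proposition \ref{Z.1} and Proposition \ref{upper.bound}, which together pin $\calZ_E \bW$ between a multiple of $\calY^0_E \bW$ from above and bound $\calY^0_E \bW$ itself by a multiple of $(\rmdiam E)^m$. There is essentially no obstacle: the corollary is a direct juxtaposition of two results already established.

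In detail, I would first fix once and for all $\veps = 1/2$ (any value in $(0,1)$ would do) and set
\begin{equation*}
\bdelta_{\ref{cor.ub}}(n,\Lambda) = \min\left\{ \bdelta_{\ref{Z.1}}(n,\Lambda,1/2) \,,\, \bdelta_{\ref{upper.bound}}(n,\Lambda)\right\},
\end{equation*}
so that under the hypothesis $\rmdiam E \leq \bdelta_{\ref{cor.ub}}(n,\Lambda)$ both previous statements are applicable. The upper bound in Proposition \ref{Z.1} then yields, for $\calL^n$-almost every $u \in E$,
\begin{equation*}
\calZ_E \bW(u) \leq \tfrac{3}{2} \cdot 2^{(n-m)/2} \binom{n}{n-m}^{1/2} \calY^0_E \bW(u) .
\end{equation*}

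Next, for $u \in E$ one has $\rmdiam(E \cup \{u\}) = \rmdiam E \leq \bdelta_{\ref{upper.bound}}(n,\Lambda)$, so Proposition \ref{upper.bound} applies and gives
\begin{equation*}
\calY^0_E \bW(u) \leq \bc_{\ref{upper.bound}}(m) (\rmdiam E)^m
\end{equation*}
for \emph{every} $u \in E$ (in particular, for $\calL^n$-almost every $u \in E$). Combining the two displays and setting
\begin{equation*}
\bc_{\ref{cor.ub}}(n) := \tfrac{3}{2} \cdot 2^{(n-m)/2} \binom{n}{n-m}^{1/2} \bc_{\ref{upper.bound}}(m)
\end{equation*}
(which depends only on $n$, since $1 \leq m \leq n-1$ is fixed throughout) yields $\calZ_E \bW(u) \leq \bc_{\ref{cor.ub}}(n) (\rmdiam E)^m$ for $\calL^n$-almost every $u \in E$, as desired. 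The only minor subtlety worth mentioning is that the $\calL^n$-null exceptional set is the one coming from Proposition \ref{Z.1}, since the bound on $\calY^0_E \bW$ holds pointwise on $E$.
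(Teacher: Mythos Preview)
Your proof is correct and follows exactly the same approach as the paper, which simply states $\bdelta_{\ref{cor.ub}}(n,\Lambda) = \min\{ \bdelta_{\ref{upper.bound}}(n,\Lambda),\bdelta_{\ref{Z.1}}(n,\Lambda,1/2) \}$ and leaves the rest implicit. Your write-up is a faithful unpacking of that one-line argument, including the explicit choice of $\veps=1/2$ and the observation that the bound on $\calY^0_E\bW$ holds pointwise on $E$.
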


\begin{proof}
Let $\bdelta_{\ref{cor.ub}}(n,\Lambda) = \min\{ \bdelta_{\ref{upper.bound}}(n,\Lambda),\bdelta_{\ref{Z.1}}(n,\Lambda,1/2) \}$. 
\end{proof}

\section{Lower bound for $\calY_E \bW$ and $\calZ_E \bW$}

\begin{Empty}[Setting for this section]
\label{51}
We enforce again the exact same assumptions as in \ref{31}; and as in \ref{fibration.2}, we let $C_r = \R^{n-m} \cap \{ y : |y| \leq r \}$.
\end{Empty}

\begin{Empty}[Polyballs]
\label{pb}
Given $x_0 \in \Rn$ and $r > 0$, we define
\begin{equation*}
\bC_\bW(x_0,r) = \Rn \cap \left\{ x : \left| P_{\bW_0(x_0)}(x-x_0)\right| \leq r \text{ and } \left| P_{\bW_0(x_0)^\perp}(x-x_0)\right| \leq r \right\} .
\end{equation*}
We notice that, if $x \in \bC_\bW(x_0,r)$, then $|x-x_0| \leq r \sqrt{2}$; in particular, $\rmdiam \bC_\bW(x_0,r) \leq 2 \sqrt{2}r$. 
We also notice that $\calL^n\left( \bC_\bW(x_0,r)\right) = \balpha(m)\balpha(n-m)r^n$. 
\end{Empty}

\begin{Empty}
\label{53}
{\it 
Given $0 < \veps < 1/3$, there exists $\bdelta_{\theTheorem}(n,\Lambda,\veps) > 0$ with the following property. If
\begin{enumerate}
\item $0 < r < \bdelta_{\theTheorem}(n,\Lambda,\veps)$;
\item $u \in \bC_\bW(x_0,r) \subset U$;
\item $|g_{\bv_1,\ldots,\bv_{n-m},u}(x_0)| \leq (1-3\veps)r$;
\item $C \subset C_{\veps r}$ is closed; 
\end{enumerate}
then
\begin{equation*}
\calL^n \left( \bC_\bW(x_0,r) \cap g_{\bv_1,\ldots,\bv_{n-m},u}^{-1}(C)\right) \geq \frac{1}{1+\veps} \balpha(m) r^m \calL^{n-m}(C) .
\end{equation*}
}
\end{Empty}

\begin{Remark}
With hopes that the following will help the reader form a geometrical imagery: Under the circumstances \ref{53}, $\bC_\bW(x_0,r) \cap g_{\bv_1,\ldots,\bv_{n-m},u}^{-1}(C)$ may be seen as a ``nonlinear stripe'', ``horizontal'' with respect to $\bW_0(x_0)$, ``at height'' $g_{\bv_1,\ldots,\bv_{n-m},u}(x_0)$ with respect to $x_0$, and of ``width'' $C$.
\end{Remark}

\begin{proof}[Proof of \ref{53}]
Given $z \in \bW_0(x_0) \cap \bB(0,r)$, we define 
\begin{equation*}
V_z = \Rn \cap \left\{ x_0 + z + \sum_{i=1}^{n-m} y_i \bv_i(x_0) : y \in C_r \right\} \subset \bC_\bW(x_0,r)
\end{equation*}
and we consider the isometric parametrization $\gamma_z : C_r \to V_z$ defined by the formula
\begin{equation*}
\gamma_z(y) = x_0 + z + \sum_{i=1}^{n-m} y_i \bv_i(x_0) .
\end{equation*}
We also abbreviate $f_{z,u} = g_{\bv_1,\ldots,\bv_{n-m},u} \circ \gamma_z$.
\par 
\textsc{Claim \#1.} $\rmLip f_{z,u} \leq (1 + \veps)^\frac{1}{n-m}$.
\par 
Since $\gamma_z$ is an isometry, it suffices to obtain an upper bound for $\rmLip g_{\bv_1,\ldots,\bv_{n-m},u}|_{\bC_\bW(x_0,r)}$. 
Let $x,x' \in \bC_\bW(x_0,r)$ and note that
\begin{equation*}
\begin{split}
\big| g_{\bv_1,\ldots,\bv_{n-m},u}(x)& - g_{\bv_1,\ldots,\bv_{n-m},u}(x') \big|  = \sqrt{\sum_{i=1}^{n-m} \left| \la \bv_i(x),x-u\ra - \la \bv_i(x'),x'-u\ra \right|^2} \\
& \leq \sqrt{\sum_{i=1}^{n-m} \left( \left| \la \bv_i(x)-\bv_i(x'),x-u\ra \right| + \left| \la \bv_i(x'),x-x' \ra \right|\right)^2} \\
&\leq \sqrt{\sum_{i=1}^{n-m}\left| \la \bv_i(x)-\bv_i(x'),x-u\ra \right|^2} + \sqrt{\sum_{i=1}^{n-m}\left| \la \bv_i(x'),x-x' \ra \right|^2}\\
& \leq \sqrt{n-m} \Lambda |x-x'||x-u| + \left| P_{\bW_0(x')^\perp}(x-x') \right| \\
& \leq \left( 1 + \sqrt{n-m} \Lambda 2 \sqrt{2}r \right) |x-x'| .
\end{split}
\end{equation*}
Recalling hypothesis (1), it is now apparent that $\bdelta_{\ref{53}}$ can be chosen small enough, according to $n$, $\Lambda$, and $\veps$, so that \textsc{Claim \#1} holds.
\par 
\textsc{Claim \#2.} {\it For $\calL^{n-m}$-almost every $y \in C_r$, one has $\|Df_{z,u}(y) - \rmid_{\R^{n-m}}\| \leq \veps$}.
\par 
Let $y \in C_r$ be such that $f_{z,u}$ is differentiable at $y$.
We shall estimate the coefficients of the matrix representing $Df_{z,u}(y)$ with respect to the canonical basis. 
Fix $i,j=1,\ldots,n-m$ and recall \eqref{eq.2}:
\begin{equation*}
\begin{split}
\frac{\partial}{\partial y_i} \la f_{u,z}(y) , e_j \ra & = \frac{\partial}{\partial y_i} \la g_{\bv_1,\ldots,\bv_{n-m},u}(\gamma_z(y)) , e_j \ra \\
& = \left(\frac{\partial }{\partial y_i}\right)g_{\bv_j,u}(\gamma_z(y)) \\
& = \left\la \nabla g_{\bv_j,u}(\gamma_z(y)) , \frac{\partial \gamma_z(y)}{\partial y_i} \right\ra \\
& = \left\la D\bv_j(\gamma_z(y))(\bv_i(x_0)) , \gamma_z(y)-u \right\ra + \left\la \bv_j(\gamma_z(y)) , \bv_i(x_0) \right\ra \\
& = \rmI + \rmII .
\end{split}
\end{equation*}
Next, note that
\begin{multline*}
\left| \rmII - \delta_{ij} \right| = \left| \rmII - \la \bv_j(x_0) , \bv_i(x_0) \ra \right| = \left| \la \bv_j(\gamma_z(y)) - \bv_j(x_0) , \bv_i(x_0) \ra \right| \\
\leq \Lambda \left| \gamma_z(y) - x_0 \right| \leq \Lambda 2 \sqrt{2} r \leq \frac{\veps}{2(n-m)},
\end{multline*}
where the last inequality follows from hypothesis (1), upon choosing $\bdelta_{\ref{53}}$ small enough, according to $n$, $\Lambda$, and $\veps$. 
Moreover,
\begin{equation*}
\left| \rmI \right| \leq \Lambda | \gamma_z(y)-u| \leq \Lambda 2 \sqrt{2} r \leq \frac{\veps}{2(n-m)}.
\end{equation*}
Therefore, if $(a_{ij})_{i,j=1,\ldots,n-m}$ is the matrix representing $Df_{z,u}(y)$ with respect to the canonical basis, we have shown that $|a_{ij}-\delta_{ij}| \leq \frac{\veps}{n-m}$, for all $i,j=1,\ldots,n-m$. 
This completes the proof of \textsc{Claim \#2}.
\par 
\textsc{Claim \#3.} $C_{\veps r } \subset f_{z,u}(C_r)$.
\par 
We shall show that $|y - f_{z,u}(y)| \leq  (1-\veps)r$, for every $y \in \rmBdry C_r$, and the conclusion will become a consequence of the intermediate value theorem, in case $m=n-1$, or a standard application of homology theory, see, e.g., \cite[4.6.1]{DEP.05c}, in case $m < n-1$. 
If $m < n-1$, it is clearly enough to establish this inequality only for $\calH^{n-m-1}$-almost every $y \in \rmBdry C_r$. 
In that case, owing to the coarea theorem \cite[3.2.22]{GMT}, we may choose such a $y$ so that that $f_{z,u}$ is differentiable $\calH^1$-almost everywhere on the line segment $\R^{n-m} \cap \{ sy : 0 \leq s \leq 1 \}$. 
Whether $m < n-1$, or $m=n-1$, it then follows from \textsc{Claim \#2} that
\begin{multline*}
\left| f_{z,u}(y) - f_{z,u}(0) - y  \right| = \left| \int_0^1 Df_{z,u}(sy)(y) d\calL^1(s) - y \right| \\ \leq \int_0^1 \left| Df_{z,u}(sy)(y)-y\right| d\calL^1(s) \leq \veps |y| = \veps r .
\end{multline*}
Accordingly,
\begin{equation*}
\left|f_{z,u}(y) - y \right| \leq \left| f_{z,u}(y) - f_{z,u}(0) - y  \right| + \left| f_{z,u}(0)\right| \leq \veps r + \left| f_{z,u}(0)\right|
\end{equation*}
and the claim will be established upon showing that $\left| f_{z,u}(0)\right| \leq (1- 2 \veps)r$.
Note that $f_{z,u}(0) = g_{\bv_1,\ldots,\bv_{n-m},u}(x_0+z)$; we shall use hypothesis (3) to bound its norm from above. 
Given $j=1,\ldots,n-m$, recall that $\la \bv_j(x_0) , z \ra = 0$, thus,
\begin{multline*}
\left| g_{\bv_j,u}(x_0+z) - g_{\bv_j,u}(x_0) \right| = \left| \la \bv_j(x_0+z) , x_0+z-u \ra - \la \bv_j(x_0) , x_0 - u \ra \right| \\
 = \left| \la \bv_j(x_0+z) , x_0+z-u \ra - \la \bv_j(x_0) , x_0 + z - u \ra \right| \leq \Lambda |z| |x_0 + z - u| \\
 \leq \Lambda r 2\sqrt{2} r \leq \frac{\veps r}{\sqrt{n-m}},
\end{multline*}
where the last inequality holds, according to hypothesis (1), provided $\bdelta_{\ref{53}}$ is chosen sufficiently small. 
In turn,
\begin{multline*}
\left| f_{z,u}(0) \right| \leq \left| g_{\bv_1,\ldots,\bv_{n-m},u}(x_0+z) - g_{\bv_1,\ldots,\bv_{n-m},u}(x_0) \right| + \left| g_{\bv_1,\ldots,\bv_{n-m},u}(x_0)\right| \\
\leq \veps r + (1-3\veps)r = (1-2\veps ) r,
\end{multline*}
by virtue of hypothesis (3).
\par 
\textsc{Claim \#4.} {\it For every $z \in \bW_0(x_0) \cap \bB(0,r)$ and every closed $C \subset C_{\veps r}$, one has $\calH^{n-m}(C) \leq (1+\veps) \calH^{n-m} \left( g_{\bv_1,\ldots,\bv_{n-m},u}^{-1}(C) \cap V_z \right)$.}
\par 
First, notice that
\begin{equation*}
g_{\bv_1,\ldots,\bv_{n-m},u}^{-1}(C) \cap V_z = \gamma_z \bigg( \gamma_z^{-1} \left(g_{\bv_1,\ldots,\bv_{n-m},u}^{-1}(C) \cap V_z \right) \bigg) = \gamma_z \left( f_{z,u}^{-1}(C)\right)
\end{equation*}
and, therefore,
\begin{equation*}
\calH^{n-m} \left(g_{\bv_1,\ldots,\bv_{n-m},u}^{-1}(C) \cap V_z \right) = \calH^{n-m}  \left( f_{z,u}^{-1}(C)\right),
\end{equation*}
since $\gamma_z$ is an isometry. 
Now, since $C \subset C_{\veps r} \subset f_{z,u}(C_r)$, according to \textsc{Claim \#3}, we have
\begin{equation*}
C = f_{z,u} \left( f_{z,u}^{-1}(C) \right) .
\end{equation*}
It therefore follows from \textsc{Claim \#1} that
\begin{equation*}
\begin{split}
\calH^{n-m}(C) & \leq \left( \rmLip f_{z,u} \right)^{n-m} \calH^{n-m}  \left( f_{z,u}^{-1}(C)\right) \\
& \leq (1+\veps) \calH^{n-m} \left(g_{\bv_1,\ldots,\bv_{n-m},u}^{-1}(C) \cap V_z \right).
\end{split}
\end{equation*}
\par 
We are now ready to finish the proof, by an application of Fubini's theorem:
\begin{equation*}
\begin{split}
\calL^n \bigg( \bC_\bW(x_0,r) \cap & g_{\bv_1,\ldots,\bv_{n-m},u}^{-1}(C)\bigg)\\& = \int_{\bW_0(x_0) \cap \bB(0,r)} d\calL^m(z) \calH^{n-m}\left(g_{\bv_1,\ldots,\bv_{n-m},u}^{-1}(C) \cap V_z \right) \\
& \geq \frac{1}{1+\veps} \balpha(m)r^m \calH^{n-m}(C) .
\end{split}
\end{equation*}
\end{proof}

\begin{Empty}[Lower bound for $\calY_E \bW$]
\label{54}
{\it 
Given $0 < \veps < 1/3$, there exists $\bdelta_{\theTheorem}(n,\Lambda,\veps)>0$ with the following property. If
\begin{enumerate}
\item $0 < r < \bdelta_{\theTheorem}(n,\Lambda,\veps)$;
\item $\bC_\bW(x_0,r) \subset U$;
\item $A \subset U$ is closed;
\item $\calL^n \left( A \cap \bC_\bW(x_0,r) \right) \geq (1 - \veps) \calL^n \left(\bC_\bW(x_0,r) \right)$;
\end{enumerate}
then
\begin{equation*}
\int_{\bC_\bW(x_0,r)} \calY_{A \cap \bC_\bW(x_0,r)} \bW (u) d\calL^n(u) \geq (1 - \bc_{\theTheorem}(n) \veps) \balpha(m)r^m\calL^n \left( \bC_\bW(x_0,r) \right) ,
\end{equation*}
where $\bc_{\theTheorem}(n)$ is a positive integer depending only upon $n$.
}
\end{Empty}

\begin{proof}
Similarly to the proof of \ref{53}, we will first establish a lower bound for $\calY_{A \cap \bC_\bW(x_0,r)} \bW$ on ``vertical slices'' $V_z$ of the given polyball, followed, next, by an application of Fubini. 
Given $z \in \bW_0(x_0) \cap \bB(0,r)$, we let $V_z$ and $\gamma_z$ be as in \ref{53}, and we consider the set
\begin{equation*}
\check{V}_z = \Rn \cap \left\{ x_0 + z + \sum_{i=1}^{n-m} y_i \bv_i(x_0) : y \in C_{(1-3\veps)r}\right\}
\end{equation*}
(notice $\check{V}_z$ is slightly smaller than $V_z$ used in the proof of \ref{53}) and the isometric parametrization $\check{\gamma}_z : C_{(1-3\veps)r} \to \check{V}_z$ defined by
\begin{equation*}
\check{\gamma}_z(y) = x_0 + z + \sum_{i=1}^{n-m} y_i \bv_i(x_0) .
\end{equation*}
For part of the proof, we find it convenient to abbreviate $E = A \cap \bC_\bW(x_0,r)$.
We also put $\check {\calY}_{E} \bW = \left( \calY_{E} \bW \right) \circ \check{\gamma}_z$.
\par 
By definition of $\calY_E \bW$, for each $\check{\gamma}_z(y) \in \check{V}_z$, there exists a collection $\calC_y$ of closed balls in $\R^{n-m}$ with the following properties: For every $C \in \calC_y$, $C$ is a ball centered at 0, $C \subset C_{\veps r}$,
\begin{equation*}
\calY_E \bW \left(\check{\gamma}_z(y) \right) + \veps \geq \dashint_C \calH^m \left( E \cap g_{\bv_1,\ldots,\bv_{n-m},\check{\gamma}_z(y)}^{-1}\{h\} \right) d\calL^{n-m}(h) ,
\end{equation*}
and $\inf \{ \rmdiam C : C \in \calC_y \} = 0$.
Furthermore, $\check{\calY}_E \bW$ being $\calL^{n-m}$-summable, according to \ref{upper.bound}, there exists $N \subset C_{(1-3\veps)r}$ such that $\calL^{n-m}(N)=0$ and every $y \not \in N$ is a Lebesgue point of $\check{\calY}_E \bW$. 
For such a $y$, we may reduce $\calC_y$, if necessary, keeping all the previously stated properties valid, while enforcing also that
\begin{equation*}
\dashint_{y+C} \check{\calY}_E \bW d\calL^{n-m} + \veps \geq \left(\check{\calY}_E \bW \right)(y),
\end{equation*}
whenever $C \in \calC_y$.
We infer that, for each $y \in C_{(1-3\veps)r} \setminus N$ and each $C \in \calC_y$,
\begin{equation}
\label{eq.30}
\int_{y + C} \check{\calY}_E \bW d\calL^{n-m} + 2\veps \calL^{n-m}(y+C) \geq \int_C \calH^m \left( E \cap g_{\bv_1,\ldots,\bv_{n-m},\check{\gamma}_z(y)}^{-1}\{h\} \right) d\calL^{n-m}(h).
\end{equation}
It ensues from the Vitali covering theorem that there is a sequence $(y_k)_k$ in $C_{(1-3\veps)r} \setminus N$, and $C_k \in \calC_{y_k}$, such that the balls $y_k + C_k$, $k=1,2,\ldots,$ are pairwise disjoint and $\calL^{n-m} \left( C_{(1-3\veps)r} \setminus \cup_{k=1}^\infty (y_k+C_k) \right) = 0$. 
It therefore follows, from \eqref{eq.30} and the fact that $\gamma_z$ is an isometry, that
\begin{equation}
\label{eq.31}
\int_{V_z} \calY_E \bW d\calH^{n-m} +2\veps \calH^{n-m}(V_z) \geq \sum_{k=1}^\infty \int_{C_k} \calH^m \left( E \cap g_{\bv_1,\ldots,\bv_{n-m},u_k}^{-1}\{y\} \right) d\calL^{n-m}(y),
\end{equation}
where we have abbreviated $u_k = \check{\gamma}_z(y_k)$. 
We also abbreviate $S_k = g_{\bv_1,\ldots,\bv_{n-m},u_k}^{-1}(C_k)$ and we infer from the coarea formula that, for each $k=1,2,\ldots$,
\begin{multline}
\label{eq.32}
\int_{C_k} \calH^m \left( E \cap g_{\bv_1,\ldots,\bv_{n-m},u_k}^{-1}\{y\} \right) d\calL^{n-m}(y) = \int_{E \cap S_k} Jg_{\bv_1,\ldots,\bv_{n-m},u_k}d\calL^n \\
\geq (1-\veps) \calL^n \left( E \cap S_k\right),
\end{multline}
where the last inequality follows from \ref{jac.g}, applied with $U = \rmInt \bC_\bW(x_0,r)$, provided that $\bdelta_{\ref{54}}(n,\Lambda,\veps)$ is chosen smaller than $(2\sqrt{2})^{-1}\bdelta_{\ref{jac.g}}(n,\Lambda,\veps)$. 
Letting $S = \cup_{k=1}^\infty S_k$ and recalling that $E = A \cap \bC_\bW(x_0,r)$, we infer, from \eqref{eq.31} and \eqref{eq.32}, that
\begin{multline}
\label{eq.33}
\int_{V_z} \calY_E \bW d\calH^{n-m} +2\veps \calH^{n-m}(V_z) \geq (1-\veps) \calL^n(E \cap S) \\
 \geq (1-\veps) \big( \calL^n ( \bC_\bW(x_0,r) \cap S) - \calL^n( \bC_\bW(x_0,r) \setminus A ) \big).
\end{multline}
Applying \ref{53} to each $S_k$ does not immediately yield a lower bound for $\calL^n ( \bC_\bW(x_0,r) \cap S)$, because the $S_k$ are not necessarily pairwise disjoint. 
This is why we now introduce slightly smaller versions of these:
\begin{equation*}
\check{C}_k = (1-\veps) C_k \quad \text{ and } \quad \check{S}_k = g_{\bv_1,\ldots,\bv_{n-m},u_k}^{-1}\left(\check{C}_k\right) .
\end{equation*}
\par 
\textsc{Claim.} {\it The sets $\check{S}_k \cap \bC_\bW(x_0,r)$, $k=1,2,\ldots$, are pairwise disjoint.}
\par
Assume, if possible, that there are $j \neq k$ and $x \in \check{S}_j \cap \check{S}_k \cap \bC_\bW(x_0,r)$. 
Letting $\rho_j$ and $\rho_k$ denote, respectively, the radius of $C_j$, and of $C_k$, we notice that $\rho_j + \rho_k < |y_j-y_k|$, because $(y_j+C_j) \cap (y_k+C_k) = \emptyset$. 
Since $\check{\gamma}_z$ is an isometry, we have $|u_j-u_k| = \left| \check{\gamma}_z(y_j) - \check{\gamma}_z(y_k)\right| = |y_j-y_k|$ and, therefore, also
\begin{multline}
\label{eq.34}
\left| g_{\bv_1,\ldots,\bv_{n-m},u_j}(x) - g_{\bv_1,\ldots,\bv_{n-m},u_k}(x)\right| \leq \left| g_{\bv_1,\ldots,\bv_{n-m},u_j}(x) \right| + \left| g_{\bv_1,\ldots,\bv_{n-m},u_k}(x) \right|\\
\leq (1-\veps)\rho_j + (1-\veps)\rho_k < (1-\veps) \left| u_j - u_k \right| .
\end{multline}
We now introduce the following vectors of $\R^{n-m}$:
\begin{equation*}
h_j = \sum_{i=1}^{n-m} \la \bv_i(x_0),u_j \ra e_i \quad \text{ and } \quad h_k = \sum_{i=1}^{n-m} \la \bv_i(x_0),u_k \ra e_i
\end{equation*}
and we notice that
\begin{equation*}
\left| h_j - h_k \right| = \left| P_{\bW_0(x_0)^\perp}(u_j-u_k)\right| = |u_j-u_k|,
\end{equation*}
where the second equality holds because $u_j-u_k \in \bW_0(x_0)^\perp$, as clearly follows from the definition of $\check{\gamma}_z$. 
Furthermore,
\begin{multline*}
\bigg| \big( g_{\bv_1,\ldots,\bv_{n-m},u_j}(x) - g_{\bv_1,\ldots,\bv_{n-m},u_k}(x)\big) - \big( h_k - h_j \big)  \bigg| 
= \sqrt{\sum_{i=1}^{n-m} \left| \la \bv_i(x) - \bv_i(x_0) , u_k-u_j \ra\right|^2}\\
\leq \sqrt{n-m} \Lambda \sqrt{2} r \left| u_j - u_k \right| \leq \veps \left| u_j - u_k \right| ,
\end{multline*}
since we may choose $\bdelta_{\ref{54}}(n,\Lambda,\veps)$ to be so small that the last inequality holds, in view of hypothesis (1). 
Whence,
\begin{equation*}
\left| g_{\bv_1,\ldots,\bv_{n-m},u_j}(x) - g_{\bv_1,\ldots,\bv_{n-m},u_k}(x)\right| \geq \left| h_j - h_k \right| -  \veps \left| u_j - u_k \right| = (1-\veps)  \left| u_j - u_k \right|,
\end{equation*}
contradicting \eqref{eq.34}. 
The \textsc{Claim} is established.
\par 
Thus,
\begin{equation}
\label{eq.35}
\begin{split}
\calL^n \left( \bC_\bW(x_0,r) \cap S\right) & = \calL^n \left( \bC_\bW(x_0,r) \cap \cup_{k=1}^\infty S_k\right) \\
& \geq \calL^n \left( \bC_\bW(x_0,r) \cap \cup_{k=1}^\infty \check{S}_k\right) \\
& = \sum_{k=1}^\infty \calL^n \left( \bC_\bW(x_0,r) \cap  \check{S}_k\right) \\
& = \sum_{k=1}^\infty \calL^n \left( \bC_\bW(x_0,r) \cap g_{\bv_1,\ldots,\bv_{n-m},u_k}^{-1}\left(\check{C}_k\right) \right) \\
& \geq \frac{1}{1+\veps}\balpha(m)r^m \sum_{k=1}^\infty \calL^{n-m} \left( \check{C}_k \right),
\end{split}
\end{equation}
where the last inequality follows from \ref{53}. 
We notice that, indeed, \ref{53} applies, since $\check{C}_k \subset C_k \subset C_{\veps r}$ and $\left|g_{\bv_1,\ldots,\bv_{n-m},u_k}(x_0)\right| = \left| P_{\bW_0(x_0)^\perp}(u_k-x_0) \right| = \left| y_k\right| \leq (1-3\veps)r$.
\par 
Now,
\begin{multline}
\label{eq.36}
\sum_{k=1}^\infty \calL^{n-m} \left( \check{C}_k \right)  = (1-\veps)^{n-m} \sum_{k=1}^\infty \calL^{n-m} \left( C_k \right) = (1-\veps)^{n-m} \sum_{k=1}^\infty \calL^{n-m} \left( y_k + C_k \right) \\
\geq (1-\veps)^{n-m} \calL^{n-m} \left( C_{(1-3\veps)r} \right) \geq (1-3\veps)^{2(n-m)} \balpha(n-m)r^{n-m} .
\end{multline}
We infer, from \eqref{eq.35} and \eqref{eq.36}, that
\begin{equation*}
\calL^n \left( \bC_\bW(x_0,r) \cap S\right) \geq \frac{(1-3\veps)^{2(n-m)}}{1+\veps}\calL^n \left( \bC_\bW(x_0,r)\right) .
\end{equation*}
It therefore ensues, from \eqref{eq.33} and hypothesis (4), that
\begin{equation*}
\int_{V_z} \calY_E \bW d\calH^{n-m} +2\veps \calH^{n-m}(V_z) \geq (1-\veps)\left(\frac{(1-3\veps)^{2(n-m)}}{1+\veps} - \veps \right)\calL^n \left( \bC_\bW(x_0,r)\right) .
\end{equation*}
Integrating over $z$, we infer from Fubini's theorem that
\begin{multline*}
\int_{\bC_\bW(x_0,r)} \calY_{A \cap \bC_\bW(x_0,r)} \bW d\calL^n = \int_{\bW_0(x_0) \cap \bB(0,r)} d\calL^n(z) \int_{V_z} \calY_E \bW d\calH^{n-m} \\
\geq \left[ (1-\veps)\left(\frac{(1-3\veps)^{2(n-m)}}{1+\veps}-\veps\right)-2\veps\right]\balpha(m)r^m\calL^n \left( \bC_\bW(x_0,r)\right) .
\end{multline*}
\end{proof}

\begin{Proposition}
\label{lower.bound}
Given $0 < \veps < 1/3$, there exist $\bdelta_{\theTheorem}(n,\Lambda,\veps)>0$ and $\bc_{\theTheorem}(n) \geq 1$ with the following property. If
\begin{enumerate}
\item $0 < r < \bdelta_{\theTheorem}(n,\Lambda,\veps)$;
\item $\bC_\bW(x_0,r) \subset U$;
\item $A \subset U$ is closed;
\item $\calL^n \left( A \cap \bC_\bW(x_0,r) \right) \geq (1 - \veps) \calL^n \left(\bC_\bW(x_0,r) \right)$;
\end{enumerate}
then
\begin{equation*}
\int_{A \cap \bC_\bW(x_0,r)} \calY_{A \cap \bC_\bW(x_0,r)} \bW (u) d\calL^n(u) \geq (1 - \bc_{\theTheorem}(n) \veps) \balpha(m)r^m\calL^n \left( \bC_\bW(x_0,r) \right) .
\end{equation*}
\end{Proposition}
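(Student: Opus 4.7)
The plan is to derive this from the already-established lower bound over the full polyball \ref{54}, by excising the small exceptional set $\bC_\bW(x_0,r) \setminus A$ and controlling its contribution via the upper bound \ref{upper.bound}. More precisely, I would write the trivial decomposition
\begin{equation*}
\int_{\bC_\bW(x_0,r)} \calY_{A \cap \bC_\bW(x_0,r)} \bW \, d\calL^n = \int_{A \cap \bC_\bW(x_0,r)} \calY_{A \cap \bC_\bW(x_0,r)} \bW \, d\calL^n + \int_{\bC_\bW(x_0,r) \setminus A} \calY_{A \cap \bC_\bW(x_0,r)} \bW \, d\calL^n,
\end{equation*}
use \ref{54} to bound the left-hand side from below by $(1 - \bc_{\ref{54}}(n)\veps) \balpha(m) r^m \calL^n(\bC_\bW(x_0,r))$, and use the upper bound to control the second integral on the right.

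For the upper bound on the exceptional integral, note that $E := A \cap \bC_\bW(x_0,r)$ has $\rmdiam E \leq 2\sqrt{2} r$ by \ref{pb}, and for any $u \in \bC_\bW(x_0,r)$ we also have $\rmdiam(E \cup \{u\}) \leq 2\sqrt{2} r$. Provided $\bdelta_{\ref{lower.bound}}(n,\Lambda,\veps)$ is chosen smaller than $(2\sqrt{2})^{-1} \bdelta_{\ref{upper.bound}}(n,\Lambda)$, \ref{upper.bound} applies uniformly on $\bC_\bW(x_0,r)$ and yields
\begin{equation*}
\calY_{A \cap \bC_\bW(x_0,r)} \bW(u) \leq \bc_{\ref{upper.bound}}(m) (2\sqrt{2} r)^m,
\end{equation*}
for every $u \in \bC_\bW(x_0,r)$. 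Combining this with hypothesis (4), which gives $\calL^n(\bC_\bW(x_0,r) \setminus A) \leq \veps\, \calL^n(\bC_\bW(x_0,r))$, we obtain
\begin{equation*}
\int_{\bC_\bW(x_0,r) \setminus A} \calY_{A \cap \bC_\bW(x_0,r)} \bW \, d\calL^n \leq \bc_{\ref{upper.bound}}(m) (2\sqrt{2})^m \veps\, r^m \calL^n(\bC_\bW(x_0,r)).
\end{equation*}

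Subtracting, factoring $\balpha(m) r^m \calL^n(\bC_\bW(x_0,r))$, and absorbing the constants $\bc_{\ref{54}}(n)$ and $\bc_{\ref{upper.bound}}(m)(2\sqrt{2})^m/\balpha(m)$ into a single dimensional constant $\bc_{\ref{lower.bound}}(n)$ completes the proof. There is no real obstacle here: the argument is essentially a bookkeeping one, combining a lower bound over a slightly larger domain with a uniform pointwise upper bound on the excised sliver. The only mild care needed is to check that the threshold $\bdelta_{\ref{lower.bound}}(n,\Lambda,\veps)$ is taken to be the minimum of $\bdelta_{\ref{54}}(n,\Lambda,\veps)$ and $(2\sqrt{2})^{-1} \bdelta_{\ref{upper.bound}}(n,\Lambda)$, so that both preceding results are simultaneously applicable.
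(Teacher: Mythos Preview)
Your proposal is correct and follows exactly the approach the paper intends: the paper's own proof simply records the choice $\bdelta_{\ref{lower.bound}}(n,\Lambda,\veps) = \min \{ \bdelta_{\ref{54}}(n,\Lambda,\veps),\, (2\sqrt{2})^{-1} \bdelta_{\ref{upper.bound}}(n,\Lambda)\}$ and leaves the rest to the reader, which is precisely the decomposition-and-subtraction argument you spell out. Your bookkeeping and your choice of threshold match the paper verbatim.
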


\begin{Remark}
The difference with \ref{54} is the domain of integration (being smaller) in the integral, on the left hand side in the conclusion.
\end{Remark}

\begin{proof}[Proof of \ref{lower.bound}]
The reader will happily check that 
\begin{equation*}
\bdelta_{\ref{lower.bound}}(n,\Lambda,\veps) = \min \left\{ \bdelta_{\ref{54}}(n,\Lambda,\veps), \left(2\sqrt{2}\right)^{-1} \bdelta_{\ref{upper.bound}}(n,\Lambda)\right\}
\end{equation*}
suits their needs.
\end{proof}

\begin{Proposition}
\label{Z.positive}
There exists $\bdelta_{\theTheorem}(n,\Lambda) > 0$ with the following property. If $\rmdiam E \leq \bdelta_{\theTheorem}(n,\Lambda)$, then
\begin{equation*}
\calZ_E \bW(u) > 0,
\end{equation*}
for $\calL^n$-almost every $u \in E$.
\end{Proposition}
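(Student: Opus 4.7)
The plan is to deduce this proposition directly from the density-type bounds of Corollary \ref{lb.2} and Proposition \ref{lower.bound}. First I would set
\begin{equation*}
\bdelta_{\ref{Z.positive}}(n,\Lambda) := \bdelta_{\ref{lb.1}}(n,\Lambda,1/2),
\end{equation*}
so that under the hypothesis $\rmdiam E \leq \bdelta_{\ref{Z.positive}}(n,\Lambda)$, Corollary \ref{lb.2} yields $\calZ_E\bW \geq 2^{-(n-m)-1}\calY_E\bW$ at $\calL^n$-a.e.\ point of $E$. The problem is thereby reduced to showing that $\calY_E\bW > 0$ $\calL^n$-a.e.\ on $E$.

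I would argue by contradiction: suppose the Borel set $Z := E \cap \{\calY_E\bW = 0\}$ has $\calL^n(Z) > 0$, and fix $\veps \in (0,1/3)$ small enough that $\bc_{\ref{lower.bound}}(n)\veps < 1$. By inner regularity of Lebesgue measure, pick a closed subset $A \subset Z$ with $\calL^n(A) > 0$, and by the Lebesgue density theorem choose a point $x_0 \in A$ of density one. The inclusions $\bB(x_0,r) \subset \bC_\bW(x_0,r) \subset \bB(x_0,r\sqrt{2})$ (recall \ref{pb}) and the equality $\calL^n(\bC_\bW(x_0,r)) = \balpha(m)\balpha(n-m)r^n$ allow one to transfer the density-one condition from balls to polyballs. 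Since $x_0 \in E \subset U$ and $U$ is open, I can then pick $r > 0$ so small that simultaneously $\bC_\bW(x_0,r) \subset U$, $r < \bdelta_{\ref{lower.bound}}(n,\Lambda,\veps)$, and
\begin{equation*}
\calL^n\bigl(A \cap \bC_\bW(x_0,r)\bigr) \geq (1-\veps)\,\calL^n\bigl(\bC_\bW(x_0,r)\bigr).
\end{equation*}

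Applying Proposition \ref{lower.bound} to this closed $A$ and this $r$ then produces
\begin{equation*}
\int_{A \cap \bC_\bW(x_0,r)} \calY_{A \cap \bC_\bW(x_0,r)}\bW \, d\calL^n \geq \bigl(1 - \bc_{\ref{lower.bound}}(n)\veps\bigr)\balpha(m)r^m \calL^n\bigl(\bC_\bW(x_0,r)\bigr) > 0.
\end{equation*}
On the other hand, because $A \cap \bC_\bW(x_0,r) \subset A \subset Z$, the pointwise monotonicity $\calY_F\bW \leq \calY_E\bW$ for $F \subset E$ recorded at the end of \ref{def.Y} forces
\begin{equation*}
\calY_{A \cap \bC_\bW(x_0,r)}\bW(u) \leq \calY_E\bW(u) = 0 \quad \text{for every } u \in A \cap \bC_\bW(x_0,r),
\end{equation*}
making the integral vanish. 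This contradiction proves $\calY_E\bW > 0$ $\calL^n$-a.e.\ on $E$, whence $\calZ_E\bW > 0$ $\calL^n$-a.e.\ on $E$.

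The only delicate step is the passage from the a priori merely Borel set $Z$ to a closed subset $A$, dictated by hypothesis (3) of \ref{lower.bound}; inner regularity handles this, and it is precisely the pointwise monotonicity of $F \mapsto \calY_F\bW$ that ensures the shrinkage from $E$ to $A \cap \bC_\bW(x_0,r)$ preserves the vanishing on which the contradiction rests.
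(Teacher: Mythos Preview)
Your argument is correct and follows essentially the same route as the paper: reduce via Corollary \ref{lb.2} to showing $\calY_E\bW>0$ almost everywhere on $E$, assume the zero set has positive measure, pass to a closed (the paper takes compact) subset $A$, pick a density point $x_0$ and a small $r$ so that Proposition \ref{lower.bound} applies, and derive a contradiction from the monotonicity of $F\mapsto\calY_F\bW$. One small inaccuracy: $\calY_E\bW$ is only asserted to be $\calL^n$-measurable in \ref{def.Y}, so $Z$ is $\calL^n$-measurable rather than Borel --- but inner regularity still furnishes the closed set $A$, so the argument is unaffected.
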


\begin{proof}
We let 
\begin{equation*}
\bdelta_{\ref{Z.positive}}(n,\Lambda) = \min \left\{ \bdelta_{\ref{lower.bound}}\left(n,\Lambda,\frac{1}{4\bc_{\ref{lower.bound}}(n)}\right) , \bdelta_{\ref{lb.2}}(n,\Lambda,1/2) \right\} .
\end{equation*}
According to \ref{lb.2}, it suffices to show that $\calY_E \bW (u) > 0$, for $\calL^n$-almost every $u \in E$. 
Define $Z = E \cap \{ u : \calY_E \bW(u) = 0 \}$ and assume, if possible, that $\calL^n(Z) > 0$.
Since $Z$ is $\calL^n$-measurable (recall \ref{def.Y}), there exists a compact set $A \subset Z$ such that $\calL^n(A) > 0$. 
Observe that the sets $\bC_\bW(x,r)$, for $x \in U$ and $r > 0$, form a density basis for $\calL^n$-measurable subsets of $U$ -- because their eccentricity is bounded away from zero -- thus, there exists $x_0 \in A$ and $r_0 > 0$ such that
\begin{equation*}
\calL^n \left( A \cap \bC_\bW(x_0,r) \right) \geq \left(1 - \frac{1}{4\bc_{\ref{lower.bound}}(n)} \right) \calL^n \left( \bC_\bW(x_0,r)\right),
\end{equation*}
whenever $0 < r < r_0$. 
There is no restriction to assume that $r_0$ is small enough for $\bC_\bW(x_0,r_0) \subset U$. 
Thus, if we let $r= \min \{ r_0 , \bdelta_{\ref{lower.bound}}(n,\Lambda,1/(4 \bc_{\ref{lower.bound}(n)}))\}$, it follows from \ref{lower.bound} that
\begin{equation}
\label{eq.40}
\int_{A \cap \bC_\bW(x_0,r)} \calY_{A \cap \bC_\bW(x_0,r)} \bW (u) d\calL^n(u) \geq \left(1 -  \frac{1}{4}\right) \balpha(m)r^m\calL^n \left( \bC_\bW(x_0,r) \right) > 0 .
\end{equation}
On the other hand, recalling \ref{def.Y} and the fact that $A \cap \bC_\bW(x_0,r) \subset E$, we infer that $\calY_{A \cap \bC_\bW(x_0,r)} \bW(u) \leq \calY_E\bW(u)$, for all $u \in \Rn$. 
In particular, $\calY_{A \cap \bC_\bW(x_0,r)} \bW(u)=0$, for all $u \in A \cap \bC_\bW(x_0,r) \subset Z$, contradicting \eqref{eq.40}. 
\end{proof}

\section{Proof of the theorems}

\begin{Theorem}
Assume that $S \subset \Rn$, $\bW_0 : S \to \bG(n,m)$ is Lipschitzian, and $A \subset S$ is Borel measurable. The following are equivalent.
\begin{enumerate}
\item $\calL^n(A)=0$.
\item For $\calL^n$ almost every $x \in A$, $\calH^m(A \cap \bW(x))=0$.
\item For $\calL^n$ almost every $x \in S$, $\calH^m(A \cap \bW(x))=0$.
\end{enumerate}
\end{Theorem}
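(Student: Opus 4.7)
The plan is to reduce both nontrivial implications to the Radon--Nikod\'ym analysis developed in Sections~4 and~6. Those tools assume that $\bW_0$ is Lipschitzian on an open set, so I first extend $\bW_0 : S \to \bG(n,m)$ to a Lipschitzian map $\wh{\bW}_0 : U \to \bG(n,m)$ on an open neighborhood $U$ of $S$, by means of \ref{grassmann}. Since $\wh{\bW}(x) = \bW(x)$ whenever $x \in S$, it makes no difference to replace $\bW$ with $\wh{\bW}$ in any statement about points of $S$.

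The implication $(1) \Rightarrow (3)$ is then an immediate consequence of Proposition~\ref{AC.1}: writing $U = \bigcup_k E_k$ with $E_k = U \cap \bB(0,k)$ (of finite $\calL^n$ measure), $\calL^n(A) = 0$ forces $\phi_{E_k,\wh{\bW}}(A) = 0$, hence $\calH^m(A \cap \wh{\bW}(x)) = 0$ for $\calL^n$-almost every $x \in E_k$; taking the countable union and restricting to $S$ yields (3). The implication $(3) \Rightarrow (2)$ is trivial, since $A \subset S$.

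The core of the theorem is $(2) \Rightarrow (1)$, which I prove by contradiction. Assume $\calL^n(A) > 0$ and pick a Lebesgue density point $x_0 \in A$. Apply \ref{orth.frame} to produce a neighborhood $V$ of $x_0$ on which $\wh{\bW}_0$ and $\wh{\bW}_0^\perp$ admit Lipschitzian orthonormal frames with some common bound $\Lambda$ determined by $\rmLip \bW_0$ and $n$. Choose $r > 0$ so small that $\bB(x_0,r) \subset V$, $2r \leq \bdelta_{\ref{Z.positive}}(n,\Lambda)$, and $\calL^n(A \cap \bB(x_0,r)) > 0$ (the last condition is ensured by the density property). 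Setting $E = A \cap \bB(x_0,r) \subset S$, the setting of \ref{31} is met and $\rmdiam E \leq \bdelta_{\ref{Z.positive}}(n,\Lambda)$. Proposition~\ref{Z.positive} then yields $\calZ_E \wh{\bW}(u) > 0$ for $\calL^n$-almost every $u \in E$. Selecting a non-negative Borel representative of $\calZ_E \wh{\bW}$, the fact that $E \subset A$ gives
\begin{equation*}
\phi_{E,\wh{\bW}}(A) = \int_A \calZ_E \wh{\bW}\, d\calL^n \geq \int_E \calZ_E \wh{\bW}\, d\calL^n > 0.
\end{equation*}
On the other hand, hypothesis (2) combined with $\wh{\bW}= \bW$ on $E \subset S$ gives $\calH^m(A \cap \wh{\bW}(x)) = 0$ for $\calL^n$-almost every $x \in E \subset A$, whence
\begin{equation*}
\phi_{E,\wh{\bW}}(A) = \int_E \calH^m(A \cap \wh{\bW}(x))\, d\calL^n(x) = 0,
\end{equation*}
a contradiction.

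The delicate step -- positivity of the Radon--Nikod\'ym density $\calZ_E \bW$ on sets of small diameter -- was carried out in Proposition~\ref{Z.positive} via the two fibered-space constructions of Section~4 and the Vitali-type covering argument of Section~6. The theorem above is only the formal assembly of those tools, and no further estimates are required here.
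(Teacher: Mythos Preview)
Your argument is essentially the paper's proof, with the same key input---Proposition~\ref{Z.positive}---driving $(2)\Rightarrow(1)$; your packaging of that implication as a contradiction at a single Lebesgue density point is a harmless variant of the paper's exhaustion by small Borel pieces $E_{j,k}$.

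One small oversight in your $(1)\Rightarrow(3)$: Proposition~\ref{AC.1} is stated under the standing hypotheses of \ref{31}, which include the Lipschitz orthonormal frames $\bw_1,\ldots,\bw_m,\bv_1,\ldots,\bv_{n-m}$ on the open set $U$. Those frames are supplied only \emph{locally} by \ref{orth.frame}, not globally on the extension neighborhood $U$ from \ref{grassmann}; so you cannot invoke \ref{AC.1} directly with $E_k = U \cap \bB(0,k)$. The paper handles this by covering $S$ with countably many $U_{x_j}$ on which \ref{orth.frame} applies and using \ref{AC.1} on each $E_j = S \cap U_{x_j}$. This is a routine localization---indeed, you carry it out yourself in the $(2)\Rightarrow(1)$ part---and your argument goes through once it is inserted.
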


Recall our convention that $\bW(x) = x + \bW_0(x)$.

\begin{proof}
Since $\bG(n,m)$ is complete, we can extend $\bW_0$ to the closure of $S$. 
Furthermore, if the theorem holds for $\rmClos S$, then it also holds for $S$. 
Thus, there is no restriction to assume that $S$ is closed.
\par 
$(1) \Rightarrow (3)$. It follows from \ref{orth.frame} that each $x \in S$ admits an open neighborhood $U_x$ in $\Rn$ such that $\bW(x)$ can be associated with a Lipschitzian orthonormal frame satisfying all the conditions of \ref{31}, for some $\Lambda_x > 0$. 
Since $S$ is Lindel\"of, there are countably many $x_1,x_2,\ldots$ such that $S \subset \cup_j U_{x_j}$. 
Letting $E_j = S \cap U_{x_j}$, we infer from \ref{AC.1} that $\phi_{E_j,\bW}$ is absolutely continuous with respect to $\calL^n$. 
Thus, if $\calL^n(A)=0$, then $\calH^m\left( A \cap \bW(x)\right)=0$, for $\calL^n$-almost every $x \in E_j$, by definition of $\phi_{E_j,\bW}$. 
Since $j$ is arbitrary, the proof is complete.
\par
$(3) \Rightarrow (2)$ is trivial.
\par 
$(2) \Rightarrow (1)$. Let $A$ satisfy condition (2). 
It is enough to show that $\calL^n(A \cap \bB(0,r)) = 0$, for each $r > 0$. 
Fix $r > 0$ and define $S_r = S \cap \bB(0,r)$. 
Consider the $U_{x_j}$ defined in the second paragraph of the present proof; since $S_r$ is compact, finitely many of those, say $U_{x_1},\ldots,U_{x_N}$, cover $S_r$. 
Let $\Lambda = \max_{j=1,\ldots,N} \Lambda_{x_j}$. 
Partition each $U_{x_j}$, $j=1,\ldots,N$, into Borel measurable sets $E_{j,k}$, $k=1,\ldots,
K_j$, such that $\rmdiam E_{j,k} \leq \bdelta_{\ref{Z.positive}}(n,\Lambda)$. 
It then follows from \ref{Z.positive} that
\begin{equation}
\label{eq.50}
\left( \calZ_{A \cap E_{j,k}} \bW \right)(u) > 0,
\end{equation}
for $\calL^n$-almost every $u \in A \cap E_{j,k}$. 
Now, fix $j$ and $k$. 
Observe that $\calH^m \left( A \cap E_{j,k} \cap \bW(x) \right) = 0$, for $\calL^n$-almost every $x \in A \cap E_{j,k}$. 
Thus, $\phi_{A \cap E_{j,k},\bW}(A \cap E_{j,k}) =0$. 
Moreover,
\begin{equation*}
0 = \phi_{A \cap E_{j,k},\bW}\left(A \cap E_{j,k}\right) = \int_{A \cap E_{j,k}} \left( \calZ_{A \cap E_{j,k}} \bW \right)(u) d\calL^n(u) .
\end{equation*}
It follows from \eqref{eq.50} that $\calL^n(A \cap E_{j,k}) =0$. 
Since $j$ and $k$ are arbitrary, $\calL^n(A)=0$.
\end{proof}

\begin{Remark}
\label{remark}
Alternatively, one can prove the principal implication $(2) \Rightarrow (1)$ in two other ways.
One way -- more involved -- consists in applying our main result \ref{main.density} below.
A second -- simpler -- way, along the following lines, avoids reference to the estimates we obtained for the functions $\calY_E \bW$ and $\calZ_E \bW$.
Consider $x \in A$, $\eta > 0$, and $V \in \bG(n,n-m)$ such that $d\left(V^\perp, \bW_0(x) \right) < \eta$, and put $V_x = x + V$.
Define
\begin{equation*}
\Phi : (U_x \cap V_x) \times \Rm \to \Rn : (\xi,t) \mapsto \xi + \sum_{i=1}^m t_i \bw_i(\xi) \,.
\end{equation*}
$\Phi$ is locally Lipschitzian and one checks that, in fact, $\Phi$ is a lipeomorphism between $B$ and $\Phi(B)$, where $B = \bB(x,\rho)$, for some $\rho > 0$ depending upon $\eta$ and $\Lambda_x$, because its differential is close to the identity.
Referring to Fubini's theorem, one then further checks that
\begin{equation*}
\calL^n(A') = 0 \text{ if and only if } \calH^m(A' \cap \bW(\xi)) = 0 \text{, for  $\calH^{n-m}$-almost every } \xi \in V_x.
\end{equation*}
Finally, using Fubini again, with respect to the decomposition $\Rn = \bW_0(x) \oplus \bW_0(x)^\perp$, one shows that $\calH^m(A \cap \bW(\zeta))=0$, for $\calH^{n-m}$-almost every $\zeta \in x' + \bW_0(x)^\perp$, where $x'$ is as close as we wish to $x$.
Applying the previous construction with $x'$ replacing $x$, using $V = \bW_0(x)^\perp$, we find that $\calL^n(A \cap \bB(x,r))=0$, for some $r > 0$ depending on $\Lambda_x$.
\par 
Notwithstanding, it seems that the (simpler) change of variable described here is not enough to yield the (stronger) theorem below. 
\end{Remark}

\begin{Empty}[Polyballs]
\label{pb.complement}
Recalling \ref{pb}, we notice that
\begin{equation*}
\bC_{\bW}(x_0,r) = \Rn \cap \left\{ \nu_{x_0}(x-x_0) \leq r \right\},
\end{equation*}
where $\nu_{x_0}$ is a norm on $\Rn$ defined by the formula
\begin{equation*}
\nu_{x_0}(x) = \max \left\{  \left| P_{\bW_0(x_0)}(x) \right| ,  \left| P_{\bW_0(x_0)^\perp}(x) \right| \right\},
\end{equation*} 
for $x \in \Rn$.
It is readily observed that $\rmLip \nu_{x_0} \leq 1$.
\begin{enumerate}
\item[(1)] {\it One has $|\nabla \nu_{x_0}(x) | = 1$, for $\calL^n$-almost every $x \in \Rn$.} 
\end{enumerate}
\par 
Abbreviate $P = P_{\bW_0(x_0)}$ and $Q = P_{\bW_0(x_0)^\perp}$ and define $S = \Rn \cap \{ x : |P(x)| = |Q(x)| \}$, so that $\calL^n(S)=0$.
Let $x \in \Rn \setminus S$ and notice $\nu_{x_0}$ is differentiable at $x$.
We henceforth assume that $|P(x)| < |Q(x)|$, whence, $\nu_{x_0}(x) = |Q(x)|$ -- the proof in the other case is similar.
Define $\veps = |Q(x)| - |P(x)| > 0$, $e = \frac{Q(x)}{|Q(x)|}$, and let $0 < t < \veps$. 
Note that $|Q(x + te)| = |Q(x)| \left( 1 + \frac{t}{|Q(x)|}\right) > |Q(x)|$ and $|P(x + te)| \leq |P(x)| + t < |Q(x)$.
Thus, $\nu_{x_0}(x+te) = |Q(x+te)|$ and, in turn, $\nu_{x_0}(x+te) - \nu_{x_0}(x) = |Q(x+te)| - |Q(x)| = t$.
It follows that $\la \nabla \nu_{x_0}(x) , e \ra = 1$.
Since $e$ is a unit vector and $\rmLip \nu_{x_0} \leq 1$, we conclude that $|\nabla \nu_{x_0}(x)| = 1$.
\begin{enumerate}
\item[(2)] {\it Let $U$, $\bW$ and $\Lambda > 0$ be as in \ref{31}. Assume $\bC_{\bW}(x_0,r) \subset U$, $x \in \bC_{\bW}(x_0,r)$, and $0 \leq t \leq 1$ is so that $\nu_{x_0}(x-x_0) = tr$. It follows that
\begin{equation*}
\bC_{\bW}(x_0,r) \cap \bW(x) \subset \bB \left( x , r(1+t) + 8m\Lambda  r^2\right) \cap \bW(x) .
\end{equation*}
 }
\end{enumerate}
\par 
First notice that $|z| \leq \sqrt{2} \nu_{x_0}(z)$, for every $z \in \Rn$, so that $|x-x_0| \leq \sqrt{2} \nu_{x_0}(x-x_0) = \sqrt{2} tr$ and, for every $x' \in \bC_\bW(x_0,r)$, $|x-x'| \leq \sqrt{2} \left( \nu_{x_0}(x-x_0) + \nu_{x_0}(x_0-x') \right) \leq \sqrt{2} r (1+t)$.
Next notice that, for each $h \in \Rn$,
\begin{multline*}
\left| \left( P_{\bW_0(x_0)} - P_{\bW_0(x)} \right) (h) \right| = \left| \sum_{i=1}^m \la \bw_i(x_0) , h \ra \bw_i(x_0) - \sum_{i=1}^m \la \bw_i(x) , h \ra \bw_i(x)\right| \\
\leq \sum_{i=1}^m \left| \la \bw_i(x_0),h \ra \right| \left| \bw_i(x_0) - \bw_i(x) \right| + \sum_{i=1}^m \left| \la \bw_i(x_0) - \bw_i(x) , h \ra \right| \left| \bw_i(x) \right| \\
\leq 2 m \Lambda \left| x-x_0 \right| |h| .
\end{multline*}
We now assume that $x' \in \bC_\bW(x,r) \cap \bW(x)$, in particular, $x-x' \in \bW_0(x)$, whence,
\begin{multline*}
\left| x-x' \right| = \left| P_{\bW_0(x)}(x-x')\right| \leq \left| P_{\bW_0(x_0)}(x-x')\right| + \left| \left( P_{\bW_0(x_0)} - P_{\bW_0(x)}\right)(x-x')\right| \\
\leq \nu_{x_0}(x-x_0) + \nu_{x_0}(x'-x_0) + 2 m \Lambda |x-x_0| |x-x'| \leq r(1+t) + 4m \Lambda t(1+t) r^2.
\end{multline*} 
\end{Empty}

\begin{Theorem}
\label{main.density}
Assume that $A \subset \Rn$ is Borel measurable and that $\bW_0 : A \to \bG(n,m)$ is Lipschitzian.
It follows that
\begin{equation*}
\limsup_{r \to 0^+} \frac{\calH^m\left(A \cap \bB(x,r) \cap \bW(x)\right)}{\balpha(m)r^m} \geq \frac{1}{2^n},
\end{equation*}
for $\calL^n$-almost every $x \in A$.
\end{Theorem}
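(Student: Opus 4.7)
The argument is by contradiction along the lines of Section~2, essentially combining three tools from the preceding sections: Proposition \ref{lower.bound} (lower bound on $\int_E \calY_E\bW\,d\calL^n$), Corollary \ref{lb.2} (the comparison $\calY_E\bW \leq (1-\hat\veps)^{-1}2^{n-m}\calZ_E\bW$), and \ref{pb.complement}(2) (the inclusion $\bC_\bW(x_0,r)\cap\bW(x) \subset \bB(x,2r(1+o(1)))$). The scheme is to sandwich $\balpha(m)r^m\calL^n(\bC_\bW(x_0,r))$ between the lower bound from \ref{lower.bound} and an upper bound fed by the hypothetical density defect transported through the polyball-to-ball inclusion.

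Suppose for contradiction that the limsup is strictly less than $2^{-n}$ on a positive-$\calL^n$-measure subset of $A$. Since the constant $2^{-(n-m)}$ appearing in \ref{lb.2} satisfies $2^{-(n-m)}/2^m = 2^{-n}$, I may fix $0 < \veps < 1/3$ so small that $(1-\veps)2^{-(n-m)}/(2+\veps)^m < 2^{-n}$ and the set
\begin{equation*}
Z_0 := A \cap \left\{ x : \limsup_{r \to 0^+}\frac{\calH^m(A \cap \bB(x,r) \cap \bW(x))}{\balpha(m)r^m} < (1-\veps)\frac{2^{-(n-m)}}{(2+\veps)^m}\right\}
\end{equation*}
still has positive $\calL^n$-measure. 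Localizing around a point of $Z_0$, I Lipschitz-extend $\bW_0$ via \ref{grassmann}, choose an orthonormal frame via \ref{orth.frame}, and so realize the common setting \ref{31} on an open neighborhood $U$ with some Lipschitz constant $\Lambda$. Egoroff's theorem then produces a compact $Z \subset Z_0 \cap U$ of positive $\calL^n$-measure and a threshold $r_1 > 0$ on which the defining inequality of $Z_0$ is uniform for $0 < r \leq r_1$. Because the polyballs $\bC_\bW(x,r)$ are sandwiched between Euclidean balls of comparable radii, every $\calL^n$-density point of $Z$ is also a polyball-density point; I pick such an $x_0 \in Z$.

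I now fix a parameter $\hat\veps \in (0,\veps)$, to be specified at the end, and a scale $r > 0$ small enough that (i) $\bC_\bW(x_0,r) \subset U$ and $2r(1+\hat\veps) \leq r_1$; (ii) $\calL^n(Z \cap \bC_\bW(x_0,r)) \geq (1-\hat\veps)\calL^n(\bC_\bW(x_0,r))$; (iii) $r < \bdelta_{\ref{lower.bound}}(n,\Lambda,\hat\veps)$ and $2\sqrt{2}\,r < \bdelta_{\ref{lb.1}}(n,\Lambda,\hat\veps)$; and (iv) the additive $O(r^2)$-term in \ref{pb.complement}(2) is absorbed, yielding $\bC_\bW(x_0,r) \cap \bW(x) \subset \bB(x, 2r(1+\hat\veps))$ for every $x \in \bC_\bW(x_0,r)$. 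Setting $E := Z \cap \bC_\bW(x_0,r)$, Proposition \ref{lower.bound} gives the lower bound on $\int_E \calY_E\bW\,d\calL^n$, while Corollary \ref{lb.2} combined with $\int_E \calZ_E\bW\,d\calL^n = \phi_{E,\bW}(E) = \int_E \calH^m(E \cap \bW(x))\,d\calL^n(x)$ from \ref{def.Z}, the density bound defining $Z$ applied at radius $2r(1+\hat\veps)$ (via (iv)), the identity $2^{n-m} \cdot 2^{-(n-m)} = 1$, and $\calL^n(E) \leq \calL^n(\bC_\bW(x_0,r))$, together furnish the sandwich
\begin{equation*}
(1-\bc_{\ref{lower.bound}}(n)\hat\veps)\,\balpha(m)r^m\calL^n(\bC_\bW(x_0,r)) \;\leq\; \int_E \calY_E\bW\,d\calL^n \;\leq\; \frac{(1-\veps)(2+2\hat\veps)^m}{(1-\hat\veps)(2+\veps)^m}\,\balpha(m)r^m\calL^n(\bC_\bW(x_0,r)).
\end{equation*}

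The main (and essentially only) obstacle is the bookkeeping of constants: as $\hat\veps \to 0^+$, the left scalar factor tends to $1$ while the right scalar factor tends to $(1-\veps)(2/(2+\veps))^m < 1$. Hence, choosing $\hat\veps$ sufficiently small in terms of $\veps$, $m$, $n$, and $\bc_{\ref{lower.bound}}(n)$ reverses the sandwich, producing the desired contradiction. All analytic inputs are already established in the preceding sections; no further estimates are required.
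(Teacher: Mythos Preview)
Your proof is correct and follows essentially the same strategy as the paper: contradiction, Egoroff to get uniform density defect on a closed $Z$, choice of a polyball density point $x_0$, then sandwiching $\balpha(m)r^m\calL^n(\bC_\bW(x_0,r))$ between the lower bound from Proposition~\ref{lower.bound} and an upper bound obtained by combining Corollary~\ref{lb.2} (equivalently Proposition~\ref{lb.1}) with the inclusion of \ref{pb.complement}(2) and the density defect of $Z$.

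The only notable difference is a simplification in the upper-bound step. The paper decomposes the integral $\int_{Z\cap\bC_\bW(x_0,r)}\calH^m(Z\cap\bC_\bW(x_0,r)\cap\bW(x))\,d\calL^n(x)$ via the coarea formula along level sets $\{\nu_{x_0}=tr\}$, applies \ref{pb.complement}(2) with the $t$-dependent radius $r(1+t)+8m\Lambda r^2$, and then bounds $(1+t+8m\Lambda r)^m\leq(2+\veps_1)^m$ uniformly in $t\in[0,1]$ before integrating back. You instead invoke \ref{pb.complement}(2) once with the worst case $t=1$, obtaining $\bC_\bW(x_0,r)\cap\bW(x)\subset\bB(x,2r(1+\hat\veps))$ directly for all $x\in\bC_\bW(x_0,r)$. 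Since the paper does not actually exploit the $t$-dependence (it throws it away in the very next line), your shortcut loses nothing and is cleaner; the coarea detour is not needed for the constant $2^{-n}$.
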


Recall our convention that $\bW(x) = x + \bW_0(x)$.

\begin{proof}
Extend $\bW_0$ to $\Rn$ in a Borel measurable way, for instance, to be an arbitrary constant outside of $A$.
It follows from \ref{second.measurability} that, for each $r > 0$, the function $\Rn \to [0,\infty] : x \mapsto \frac{\calH^m ( A \cap \bB(x,r) \cap \bW(x))}{\balpha(m)r^m}$ is Borel measurable.
Thus, for each $j=1,2,\ldots$, the function $g_j : \Rn \to [0,\infty]$ defined by
\begin{equation*}
g_j(x)  = \sup_{0 < r < \frac{1}{j}} \frac{\calH^m \left( A \cap \bB(x,r) \cap \bW(x) \right)}{\balpha(m)r^m}
 = \sup_{\substack{0 < r < \frac{1}{j}\\r \,\text{rational} }} \frac{\calH^m \left( A \cap \bB(x,r) \cap \bW(x) \right)}{\balpha(m)r^m}
\end{equation*}
is Borel measurable as well, and so is $g = \lim_j g_j = \inf_j g_j$.
\par 
Abbreviate $\boldeta(n,m) = 2^{-(n-m)}$.
Arguing {\it reductio ad absurdum}, we henceforth assume that $A$ and $\bW_0$ fail the conclusion of the theorem.
Thus, the set $Z_0 = A \cap \left\{ x : g(x) < \frac{\boldeta(n,m)}{2^m}\right\}$ is Borel measurable and non Lebesgue null.
Accordingly, there exists $\veps_1 > 0$ such that the set $Z_0' = A \cap \left\{ x : g(x) < (1-\veps_1) \frac{\boldeta(n,m)}{(2+\veps_1)^m}\right\}$ is also Borel measurable and of positive Lebesgue measure.
It therefore ensues from Egoroff's theorem \cite[2.3.7]{GMT} that there exists a closed set $Z \subset Z'_0 \subset A$ such that $\calL^n(Z) > 0$ and that there exists a positive integer $j_0$ such that 
\begin{equation}
\label{final.1}
\frac{\calH^m\left( Z \cap \bB(x,r) \cap \bW(x)\right)}{\balpha(m)r^m} \leq g_{j_0}(x) < (1-\veps_1) \frac{\boldeta(n,m)}{(2 + \veps_1)^m},
\end{equation}
for each $x \in Z$ and each $0 < r < \frac{1}{j_0}$.
Choose $0 < \veps_2 < 10$ such that 
\begin{equation}
\label{final.2}
\frac{1-\veps_1}{1-\veps_2} \leq 1 - \frac{\veps_1}{2}
\end{equation}
and choose $0 < \veps_3 < 1/3$ such that
\begin{equation}
\label{final.3}
1 - \frac{\veps_1}{2} < 1 - \bc_{\ref{lower.bound}}(n) \veps_3 .
\end{equation}
\par 
As in the proof of \ref{Z.positive}, we recall that the family $\bC_\bW(x,r)$, for $x \in \Rn$ and $r > 0$, is a density basis of $\calL^n$-measurable sets. 
Since $\calL^n(Z) > 0$, there exists $x_0 \in Z$ such that
\begin{equation*}
\lim_{r \to 0^+} \frac{\calL^n \left( Z \cap \bC_\bW(x_0,r)\right)}{\calL^n \left( \bC_\bW(x_0,r) \right)} = 1 .
\end{equation*}
In particular, there exists $R > 0$ such that 
\begin{equation}
\label{final.4}
(1- \veps_3) \calL^n \left( \bC_\bW(x_0,r)\right) \leq \calL^n \left( Z \cap \bC_\bW(x_0,r)\right),
\end{equation}
whenever $0 < r < R$.
\par 
We let $U$ be an open neighborhood of $x_0$ in $\Rn$ associated with $A$ and $\bW_0$ in \ref{orth.frame}, so that $\bW_0$ and $\bW_0^\perp$ are associated with orthonormal frames as in \ref{31}, for some $\Lambda > 0$.
Define
\begin{equation*}
r_0 = \min \bigg\{ 1, \frac{1}{j_0(2+8m\Lambda)} , \frac{\veps_1}{8m\Lambda} , \frac{\bdelta_{\ref{lb.1}}(n,\Lambda,\veps_2)}{2\sqrt{2}} , \frac{\rmdist(x_0,\Rn \setminus U)}{2\sqrt{2}} , \bdelta_{\ref{lower.bound}}(n,\Lambda,\veps_3) , R \bigg\} .
\end{equation*}
Let $0 < r < r_0$ and observe that
\begin{equation}
\label{final.5}
\begin{split}
\left( 1 - \bc_{\ref{lower.bound}}(n) \veps_3\right) & \calL^n \left( \bC_\bW(x_0,r)\right)  \leq \int_{Z \cap \bC_\bW(x_0,r)} \frac{\calY_{Z \cap \bC_\bW(x_0,r)} \bW(u)}{\balpha(m)r^m} d\calL^n(u) \intertext{(by \ref{lower.bound} applied with $\veps=\veps_3$ and $A=Z \cap \bC_\bW(x_0,r)$)}
& \leq \frac{1}{(1-\veps_2) \boldeta(n,m)} \int_{Z \cap \bC_\bW(x_0,r)} \frac{\calH^m \left( Z \cap \bC_\bW(x_0,r) \cap \bW(x) \right)}{\balpha(m)r^m} d\calL^n(x)
\end{split}
\end{equation}
(by \ref{lb.1}, applied with $\veps=\veps_2$ and $E=B=Z\cap\bC_\bW(x_0,r)$).
We also note that
\begin{equation}
\label{final.6}
\begin{split}
 &\int_{Z \cap \bC_\bW(x_0,r)}  \frac{\calH^m \left( Z \cap \bC_\bW(x_0,r) \cap \bW(x) \right)}{\balpha(m)r^m} d\calL^n(x) \\
 & = \int_{Z \cap \{ \nu_{x_0} \leq r \}}  \frac{\calH^m \left( Z \cap \bC_\bW(x_0,r) \cap \bW(x) \right)}{\balpha(m)r^m} \left| \nabla \nu_{x_0}(x)\right| d\calL^n(x) 
 \intertext{(by \ref{pb.complement}(1))}
 & = \int_0^r d\calL^1(\rho) \int_{Z \cap \{ \nu_{x_0} = \rho \}} \frac{\calH^m \left( Z \cap \bC_\bW(x_0,r) \cap \bW(x) \right)}{\balpha(m)r^m} d\calH^{n-1}(x)
 \intertext{(by \cite[3.4.3]{EVANS.GARIEPY})}
& = r \int_0^1 d\calL^1(t) \int_{Z \cap \{ \nu_{x_0} = tr \}} \frac{\calH^m \left( Z \cap \bC_\bW(x_0,r) \cap \bW(x) \right)}{\balpha(m)r^m} d\calH^{n-1}(x) \\
& \leq r \int_0^1 d\calL^1(t) \int_{Z \cap \{ \nu_{x_0} = tr \}} \frac{\calH^m \left( Z \cap \bB\left(x , r(1+t) + 8m\Lambda r^2\right) \cap \bW(x) \right)}{\balpha(m)r^m} d\calH^{n-1}(x)
\intertext{(by \ref{pb.complement}(2))}
& \leq r \int_0^1 d\calL^1(t) \int_{Z \cap \{ \nu_{x_0} = tr \}} (1-\veps_1) \frac{\boldeta(n,m)}{(2+\veps_1)^m}(1+t + 8m\Lambda r)^m d\calH^{n-1}
\intertext{(by \eqref{final.1})}
& \leq (1-\veps_1) \boldeta(n,m) \int_0^r d\calL^1(\rho) \int_{Z \cap \{ \nu_{x_0} = \rho \}}  d\calH^{n-1} \\
& = (1-\veps_1) \boldeta(n,m) \int_{Z \cap \{ \nu_{x_0} \leq r \}} \left| \nabla \nu_{x_0} (x)\right| d\calL^n(x)
 \intertext{(by \cite[3.4.3]{EVANS.GARIEPY})}
& = (1-\veps_1) \boldeta(n,m) \calL^n \left( Z \cap \bC_\bW(x_0,r)\right) \leq (1-\veps_1) \boldeta(n,m) \calL^n \left(\bC_\bW(x_0,r)\right) 
\end{split}
\end{equation}
(by \ref{pb.complement}(1)).
Plugging \eqref{final.6} into \eqref{final.5}, we obtain
\begin{equation*}
\begin{split}
\left( 1 - \bc_{\ref{lower.bound}}(n) \veps_3\right) \calL^n \left( \bC_\bW(x_0,r)\right)  & \leq \left( \frac{1-\veps_1}{1-\veps_2}\right) \calL^n \left(  \bC_\bW(x_0,r)\right) \\
& < \left( 1 - \bc_{\ref{lower.bound}}(n) \veps_3 \right) \calL^n \left( \bC_\bW(x_0,r)\right) 
\end{split}
\end{equation*}
(by \eqref{final.2} and \eqref{final.3}), a contradiction.
\end{proof}

\bibliographystyle{amsplain}
\bibliography{/home/thierry/Documents/LaTeX/Bibliography/thdp}


\end{document}